\title{\textbf{The Complexity of Connectedness Relations on Polish Spaces}}
\date{}
\author{Michal Hevessy\footnote{email: hevessy@karlin.mff.cuni.cz; Orcid: 0009-0003-4192-2407. This author was supported by the grant SVV-2025-260827 and by Czech Science Foundation grant 24-10705S.}\\
Department of Mathematical Analysis\\
Faculty of Mathematics and Physics, Charles University\\
Prague, Czechia\\
\and 
Yusuf Uyar\footnote{email: yuyar@metu.edu.tr; Orcid: 0009-0005-8082-2972. This author was supported by TÜBİTAK-BİDEB through 2210-E Program during his Master's degree education.}\\
Department of Mathematics\\
Faculty of Arts and Science, Middle East Technical University\\
Ankara, Turkey\\
\and 
Benjamin Vejnar\footnote{email: vejnar@karlin.mff.cuni.cz; Orcid: 0000-0002-2833-5385. This author was supported by Czech Science Foundation grant 24-10705S.}\\
Department of Mathematical Analysis\\
Faculty of Mathematics and Physics, Charles University\\
Prague, Czechia\\}
\theoremstyle{plain}
\newtheorem{Theorem}[]{Theorem}
\newtheorem{Proposition}[Theorem]{Proposition}
\newtheorem{Lemma}[Theorem]{Lemma}
\newtheorem*{Theorem*}{Theorem}
\newtheorem*{Lemma*}{Lemma}
\newtheorem*{Proposition*}{Proposition}
\newtheorem*{Question*}{Question}
\newtheorem{Question}{Question}
\newtheorem{Corollary}[Theorem]{Corollary}
\theoremstyle{definition}
\newtheorem*{Definition*}{Definition}
\newtheorem{Definition}[Theorem]{Definition}
\theoremstyle{remark}
\newtheorem{Remark}[Theorem]{Remark}
\newtheorem{Example}[Theorem]{Example}
\newtheorem*{Remark*}{Remark}
\newtheorem*{Example*}{Example}
\newcommand{\closure}[1]{\mkern 1.5mu\overline{\mkern-1.5mu#1\mkern-1.5mu}\mkern 1.5mu}
\def\blfootnote{\gdef\@thefnmark{}\@footnotetext}
\DeclareMathOperator{\id}{id}
\DeclareMathOperator{\diam}{diam}
\newcommand{\Bleq}{\leq_B}
\newcommand{\Bsim}{\sim_B}
\newcommand{\Q}{\mathbb{Q}}
\newcommand{\R}{\mathbb{R}}
\newcommand{\N}{\mathbb{N}}
\begin{document}
\maketitle
\begin{abstract}
    We systematically investigate three different equivalence relations of connectedness: being connected by arcs, being connected by continua and being connected by chains of continua of decreasing diameter. The investigation is conducted from the point of view of Borel reductions, mainly on Polish spaces. All of the studied equivalence relations turn out to be  tied together and intimately related to the arc-connection relation. Among other results, it is shown that the arc-connection relation in the plane is Borel reducible to the Vitali equivalence relation and thus of a very low complexity. The same is proven for the chain continuum-connection relation on locally compact subsets of the plane, on which the continuum-connection relation is shown to have higher complexity.
\end{abstract}
\blfootnote{2020 \emph{Mathematics Subject Classification}: 54H05; 03E15; 54F16; 54E50}
\blfootnote{\emph{Key words and phrases}:  Borel reduction, classification, connectedness}

\section{Introduction}
Let \(X\) be a metric space. We consider the following three equivalence relations
\begin{align*}
    E_X^A &= \{(x,y)\in X\times X: \text{ there is an arc in \(X\) containing \(x \text{ and }y\)}\} \\
    E_X^C &= \{(x,y)\in X\times X: \text{ there is a continuum in \(X\) containing \(x\text{ and }y\)}\} \\
    E_X^{CC} &= \{(x,y)\in X\times X: \text{ for all }  \varepsilon > 0, \text{ there are continua }K_0,...,K_n \subset X,n\in \N,\\  &\text{ of diameter }<\varepsilon \text{ such that }x\in K_0,y\in K_n \text{ and } K_i\cap K_{i+1}\neq \emptyset \text{ for } 0\leq i<n\}
\end{align*}
 on \(X\), which will be called \textit{arc-connection relation}, \textit{continuum-connection relation} and \textit{chain continuum-connection relation} (or simply, chain-connection relation); respectively.
All of these equivalence relations describe a notion of connectedness in a given space. We will observe that the continuum-connection and the chain-connection relations can be realized as arc-connection relations on some other related hyperspaces  (see \Cref{Theorem: continuum-connection is acrwise connecion on hyperspace} and  \Cref{Proposition: partial cain connection is arc-connection on hyperspace of small continua}). 

In this paper, we study the complexity of all three equivalence relations from the point of view of Borel reducibility. For this purpose, we restrict ourselves mainly to Polish spaces, i.e. completely metrizable separable topological spaces, although these are well-defined equivalence relations on any metric space. We will recall the basic notions of Borel complexity theory in the next section. On a Polish space, all of the three equivalence relations can be easily shown to be analytic. The arc-connection relation is the most studied one in the literature, and some of these works will be cited throughout the paper when appropriate.

In 2025, Debs and Saint Raymond showed that on a \(G_\delta\) subset of the plane the arc-connection relation is Borel \cite{Debs_Raymond_on_the_arc-wise_connection_in_plane}. This led to a very natural question formulated by the last author during the \(2025\) Winter School in Abstract Analysis: Is the arc-connection relation on compact subsets of the plane Borel reducible to the Vitali equivalence relation?

We answer this question more generally for any Polish subset of the plane. More precisely, we show that the arc-connection relation on planar Polish spaces can be Borel reduced to one of the simplest possible equivalence relations, namely the Vitali equivalence relation. We show that the same holds for the chain-connection relation on locally compact subsets of the plane and that it is not true for the continuum-connection relation. We also note that there are examples of continua in \(\R^3\) on which the arc-connection relation is non-Borel analytic \cite{Becker_number_of_path_components, Kunen_Starbird_arc_components_in_metric_continua, Donne_arc_components_in_metric_continua}. A summary of the existing results together with the new ones is shown in \Cref{table: summary of results}.

    \begin{table}[h]
        \centering
        \caption{A Summary of Results}
        \label{table: summary of results}
    {\renewcommand{\arraystretch}{1.25} 
    \resizebox{\textwidth}{!}{
    \begin{tabular}{|c||c|c|c|c|}
        \hline
        & Polish & locally compact Polish & \(G_\delta \subset \R^2\) & locally compact \(\subset \R^2 \)   \\
        \hhline{|=||=|=|=|=|}
        \(E_X^{A}\) & 
        \begin{tabular}{c}
             universal analytic\\
             \Cref{Remark: arc-connection in R^3 realizes all analytic equivalence relations}\\
             \cite{Becker_number_of_path_components}
        \end{tabular}
        & 
        \begin{tabular}{c}
             universal analytic   \\
             \Cref{Remark: arc-connection in R^3 realizes all analytic equivalence relations} \\
             \cite{Becker_number_of_path_components}
        \end{tabular}   
        & 
        \begin{tabular}{c}
             \(E_0\)  \\
             \Cref{Theorem: Arc-connection is E_0 in the plane} \\
             \Cref{Remark: arc-connection in Knaster continua are E_0}
        \end{tabular}
        & 
        \begin{tabular}{c}
             \(E_0\) \\
             \Cref{Theorem: Arc-connection is E_0 in the plane} \\
             \Cref{Remark: arc-connection in Knaster continua are E_0}
        \end{tabular} \\
        \hline
        \(E_X^C\) 
        & 
        \begin{tabular}{c}
             universal analytic \\
             \Cref{Remark: arc-connection in R^3 realizes all analytic equivalence relations}
        \end{tabular}
        & 
        \begin{tabular}{c}
             \(E_1\)  \\
             \Cref{Proposition: continuum-connection is hypersmooth for locally compact spaces} \\
             \Cref{Example: locally compact subset of the plane with continuum-connection E_1}
        \end{tabular}
        & 
        \begin{tabular}{c}
             \(\geq E_1\)  \\
             \Cref{Example: locally compact subset of the plane with continuum-connection E_1} \\
             Question \ref{Question: 1}, \ref{Question: 2}
        \end{tabular}
        & 
        \begin{tabular}{c}
             \(E_1\)  \\
             \Cref{Proposition: continuum-connection is hypersmooth for locally compact spaces} \\
             \Cref{Example: locally compact subset of the plane with continuum-connection E_1}
        \end{tabular}
        \\
        \hline
        \(E_X^{CC}\) 
        & 
        \begin{tabular}{c}
             universal analytic \\
             \Cref{Remark: arc-connection in R^3 realizes all analytic equivalence relations}             
        \end{tabular}
        &
        \begin{tabular}{c}
             \((\R/\ell_{\infty})^\N \)   \\
             \Cref{Theorem: chain continuum relation is smaller than l infinity to omega} \\
             \Cref{Example: X with chain continuum l infinity to omega}
        \end{tabular}
        
        & 
        \begin{tabular}{c}
             \(\geq E_0\) \\
             \Cref{Remark: arc-connection in Knaster continua are E_0} \\
             Question \ref{Question: 3}, \ref{Question: 4}
        \end{tabular}
        &
        \begin{tabular}{c}
             \(E_0\)   \\
             \Cref{Remark: arc-connection in Knaster continua are E_0}\\
             \Cref{Theorem: chain continuum-connection is E_0 in the plane on locally compact sets}
        \end{tabular}
        \\
        \hline
    \end{tabular}}
    }
\end{table}
Most of these results are the upper bounds on the corresponding spaces, and not only this but they are also optimal in the sense that we construct spaces that attain these upper bounds. The two exceptions are the continuum-connection and the chain-connection relations considered on \(G_\delta\) subsets of the plane. Although we construct some planar spaces whose continuum-connection and chain-connection relations are non-trivial, we do not know if these have to be even Borel on planar Polish spaces in general.

In \cite{Solecki_space_of_composants}, Solecki studied the composant equivalence relation on indecomposable continua. The continuum-connection relation can be seen as a generalization of this equivalence relation on Polish spaces. It is also worth noting that for some continua such as the Knaster continuum \cite[2.9]{Nadler_continuum_theory} the composant equivalence relation coincides with the arc-connection relation. 

It follows from the properties of Peano continua \cite[Chapter VIII]{Nadler_continuum_theory}  that the chain-connection relation can be seen as a generalization of the arc-connection relation. In fact, we will show that there are some very natural assumptions to ensure that the two coincide (Lemmas \ref{Lemma: arc-connection is the same as chain continuum-connection without triod}, \ref{Lemma: for a unicoherent space chain connection is arc-connection} and Corollary \ref{Corollary: for a space without riods chain connection is arc-connection}). 

Very closely related to the study of these equivalence relations is the study of the descriptive complexity of the set of subspaces that have one equivalence class of connection in the hyperspace of all compact subsets (or all subcontinua) of a Polish space. A great deal has been done concerning this in the literature for the arc-connection relation and for the more standard notions of connectedness as well (see \cite{Debs_SaintRaymond_complexity_of_connectivity, Debs_SaintRaymond_complexity_of_the_arc_connected_subsets_of_plane}). It is worth mentioning that the set of arc-connected subsets of the plane in the hyperspace of all compact subsets is of very high complexity as shown in \cite{Debs_SaintRaymond_complexity_of_the_arc_connected_subsets_of_plane}, which is in contrast with our result that the arc-connection relation in the plane itself is not very complicated. We refer the reader to \cite{marcone2006} for an extensive survey on the study of descriptive complexity of several notions of connectedness.

\section{Preliminaries}
In this section, we set the stage for studying the complexity of connectedness relations. We give basic definitions and examples about the theory of Borel equivalence relations. We then present necessary definitions and results from the continuum theory. Some properties of the hyperspaces of Polish spaces will be stated as well.

\subsection{Borel Equivalence Relations}
We use the notion of Borel reducibility to determine the complexity of equivalence relations. Further details regarding the theory of Borel equivalence relations and invariant descriptive set theory can be found, for instance, in \cite{Gao}.

A \textit{standard Borel space} is a measurable space which is isomorphic to the Borel $\sigma$-algebra of a Polish space, i.e. the $\sigma$-algebra generated by the open subsets of the space. We call measurable subsets of a standard Borel space \textit{Borel}. It turns out that any Borel subset of a Polish space is standard Borel. An equivalence relation $E$ on a standard Borel space $X$ is called a \textit{Borel equivalence relation} if $E$ is a Borel subset of the product space $X\times X$. A mapping between standard Borel spaces is a \textit{Borel mapping} if the inverse image of a Borel subset is Borel.

\begin{Definition}
    Let \(X,Y\) be standard Borel spaces and let \(E,F\) be Borel equivalence relations on \(X,Y\) respectively. We say that \(E\) is \textit{Borel reducible} to \(F\), denoted by \(E \Bleq F\), if there is a Borel mapping \(f: X \to Y\), called a \textit{Borel reduction}, such that 
    \[xEy \iff f(x)Ff(y)\] 
    for all \(x,y \in X\). We say that \(E\) is \textit{Borel bireducible} with \(F\), denoted by \(E \Bsim F\), if \(E \Bleq F\) and \(F \Bleq E\). We also write \(E <_B F\) whenever \(E \Bleq F\) and \(F \not\Bleq E\).
\end{Definition}
\begin{Remark*}
    Borel reductions compare in some sense the complexity of Borel equivalence relations. If we have \(E \Bleq F\), this intuitively means that the structure of \(E\) is captured in the structure of \(F\), and thus \(E\) is no more complex than \(F\). So it is a notion of relative complexity, sometimes referred as Borel complexity, on Borel equivalence relations.
\end{Remark*}
Our goal in general is to obtain Borel reductions from the connection relations to some well-understood Borel equivalence relations. Therefore, we need to recall some examples of Borel equivalence relations. We start with the simplest class of Borel equivalence relations.
\begin{Definition}
   Let \(X\) be a standard Borel space and \(E\) be a Borel equivalence relation on \(X\). We say that \(E\) is \textit{smooth}\ if \(E\) is Borel reducible to the identity relation on a standard Borel space.
\end{Definition}
There are several characterizations of smooth equivalence relations. For us, the following condition on Polish spaces will be enough. However, we note that it can be generalized in several ways.
\begin{Theorem}[{\cite[Theorem 6.4.4]{Gao}}]
\label{Theorem: equivalence relation with closed classes is smooth}
    Let \(X\) be a Polish space and \(E\) be a Borel equivalence relation on \(X\) such that every equivalence class of \(E\) is closed. Then \(E\) is smooth.
\end{Theorem}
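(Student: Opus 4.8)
The plan is to reduce $E$ to the equality relation on a standard Borel space, which is exactly what smoothness asks for. I would first recall the standard reformulation: a Borel equivalence relation is smooth if and only if there is a countable family $\{B_n\}$ of $E$-invariant Borel sets separating the classes (for then $x \mapsto (\mathbf 1_{B_n}(x))_n \in 2^{\N}$ reduces $E$ to equality), equivalently the map $x \mapsto [x]_E$ into the Effros Borel space of closed subsets of $X$ is Borel. The natural candidates are the saturations $\widehat U = \{x \in X : [x]_E \cap U \neq \emptyset\}$ for $U$ ranging over a countable base. These are manifestly $E$-invariant, and closedness of the classes makes them separate: if $x \not\mathrel E y$ then $[x]_E$ is a closed set not containing $y$, so some basic $U \ni y$ is disjoint from $[x]_E$, giving $y \in \widehat U$ and $x \notin \widehat U$.

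The main obstacle is showing the $\widehat U$ are Borel. A priori $\widehat U$ is only analytic, being the projection of the Borel set $E \cap (X \times U)$; and the sections $[x]_E$, which are arbitrary closed subsets of a Polish space, need be neither countable nor $\sigma$-compact, so neither the Lusin--Novikov nor the Arsenin--Kunugui projection theorem applies. In fact closedness of sections alone never forces a projection to be Borel (witness the closed ``branch'' sections of an ill-founded tree, whose projection is $\Sigma^1_1$-complete), so the equivalence-relation structure must be used in an essential way; I do not see how to extract Borelness of $\widehat U$ from the sections directly.

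For this reason I would not analyze $\widehat U$ by hand but instead invoke the Glimm--Effros dichotomy in the form of Harrington--Kechris--Louveau: a Borel equivalence relation is either smooth or admits a continuous embedding $f : 2^{\N} \to X$ of $E_0$, i.e. $u \mathrel{E_0} v \iff f(u) \mathrel E f(v)$. It then suffices to rule out the second alternative under the closed-classes hypothesis. Fix $z \in 2^{\N}$. Since $[z]_{E_0}$ is countable its complement is nonempty, while $[z]_{E_0}$ itself is dense in $2^{\N}$; so I may pick $z' \notin [z]_{E_0}$ and a sequence $z_n \in [z]_{E_0}$ with $z_n \to z'$. Applying $f$, the points $f(z_n)$ all lie in the class $C = [f(z)]_E$, yet $f(z_n) \to f(z')$ by continuity and $f(z') \notin C$ because $z' \not\mathrel{E_0} z$. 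Thus $C$ fails to be closed, contradicting the hypothesis; hence $E$ is smooth.

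The genuinely hard ingredient here is the dichotomy itself, which I would simply cite as a black box; granting it, the argument above is short, and the closedness of classes enters only through the final one-line contradiction. A posteriori this also shows each $\widehat U$ is Borel, but establishing that directly seems to require the same depth.
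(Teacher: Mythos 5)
Your proof is correct: the Harrington--Kechris--Louveau dichotomy plus the density and countability of $E_0$-classes yields the contradiction exactly as you describe (the class $[f(z)]_E$ acquires the limit point $f(z')$ outside itself), and your preliminary diagnosis of why the saturations $\widehat{U}$ are not obviously Borel --- closed sections alone never force projections to be Borel --- is also accurate. The paper itself contains no proof of this statement; it is quoted as Theorem 6.4.4 of Gao's book, where it appears precisely among the consequences of the Glimm--Effros dichotomy, so your argument is essentially the same as the one in the cited source.
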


Another very common equivalence relation that will prove to be quite useful for us is the following.

\begin{Definition}
    The equivalence relation \(E_0\) is defined by 
    \[xE_0y \iff \exists m\ \forall n \geq m\   x(n) = y(n).\]
    for all $x,y\in 2^\N $. It is sometimes called the \textit{Vitali equivalence relation}.
\end{Definition}
\begin{Remark*}
    It is easy to see that \(E_0\) can be written as a countable increasing union of Borel equivalence relations with all equivalence classes finite. Any Borel equivalence relation satisfying this condition is called \textit{hyperfinite}. It can be shown that \(E_0\) is the universal hyperfinite Borel equivalence relation in the sense that any hyperfinite Borel equivalence relation is Borel reducible to $E_0$ (see \cite[Section 7.2]{Gao}).
\end{Remark*}

The Borel equivalence relation \(E_0\) is known to be Borel bireducible with the Vitali equivalence relation on \(\R\) \cite[Proposition 6.1.4]{Gao}. It can be shown to be one of the simplest possible Borel equivalence relations in the following sense. Any Borel equivalence relation $E$ is either smooth or $E_0\leq _B E$. We need several results regarding \(E_0\). We start with a result concerning a decomposition of an equivalence relation into Borel parts each of which is Borel reducible to \(E_0\).
\begin{Lemma}
\label{Lemma: decomposition into hyperfinite parts}
    Let \(X\) be a standard Borel space and \(E\) be a Borel equivalence relation on \(X\). Suppose that \(X_n\) is an \(E\)-invariant Borel subset of \(X\) such that \(E|_{X_n} \leq_B E_0\) for all \(n\in \N \). Then \(E \leq_B E_0\).
\end{Lemma}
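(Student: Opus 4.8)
The plan is to reduce \(E\) to a single copy of \(E_0\) by first gluing together the given reductions on the pieces \(X_n\) and then observing that the glued-up relation is itself hyperfinite. Throughout I use the (implicit) hypothesis that the sets cover \(X\), i.e. \(X = \bigcup_n X_n\); otherwise nothing links \(E\) to the pieces. The first step is to replace the \(X_n\) by a pairwise disjoint family with the same union: set \(Y_n = X_n \setminus \bigcup_{m<n} X_m\). Each \(Y_n\) is Borel, the \(Y_n\) are pairwise disjoint with \(\bigcup_n Y_n = X\), and, being a Boolean combination of the \(E\)-invariant sets \(X_m\), each \(Y_n\) is again \(E\)-invariant. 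Restricting a Borel reduction \(f_n : X_n \to 2^\N\) witnessing \(E|_{X_n} \Bleq E_0\) to the subset \(Y_n\) yields a Borel reduction of \(E|_{Y_n}\) to \(E_0\); so after this step I may assume the pieces are disjoint, cover \(X\), are \(E\)-invariant, and carry reductions \(f_n : Y_n \to 2^\N\) of \(E|_{Y_n}\) to \(E_0\).

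Next I would assemble these into one map. Let \(n(x)\) denote the unique index with \(x \in Y_{n(x)}\) and define \(g : X \to \N \times 2^\N\) by \(g(x) = (n(x), f_{n(x)}(x))\); this is Borel because each \(f_n\) is Borel and the \(Y_n\) are Borel. On the target I consider the equivalence relation \(F\) given by \((n,s)\,F\,(m,t) \iff n = m \text{ and } s\,E_0\,t\). Then \(g\) reduces \(E\) to \(F\): if \(x\,E\,y\), then \(E\)-invariance of the pieces forces \(x,y\) into the same \(Y_n\), whence \(n(x)=n(y)\) and \(f_n(x)\,E_0\,f_n(y)\); conversely \(g(x)\,F\,g(y)\) gives a common index \(n\) with \(x,y \in Y_n\) and \(f_n(x)\,E_0\,f_n(y)\), so \(x\,E\,y\) by the reduction property of \(f_n\).

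Finally I would show \(F \Bleq E_0\), which is where the real content sits, though it is standard. Writing \(E_0 = \bigcup_k E_0^{(k)}\) as the increasing union of the finite Borel equivalence relations defined by \(s\,E_0^{(k)}\,t \iff s(i)=t(i)\) for all \(i \geq k\), I define \(G_k\) on \(\N\times 2^\N\) by \((n,s)\,G_k\,(m,t) \iff n=m \text{ and } s\,E_0^{(k)}\,t\). Each \(G_k\) is a finite Borel equivalence relation (its classes have size \(2^k\)), the \(G_k\) increase, and \(\bigcup_k G_k = F\), so \(F\) is hyperfinite; by the universality of \(E_0\) among hyperfinite Borel equivalence relations, \(F \Bleq E_0\). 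Composing, \(E \Bleq F \Bleq E_0\). The only delicate point — the main obstacle, such as it is — is ensuring that the disjoint \(\N\)-indexed sum \(F\) stays hyperfinite rather than jumping in complexity; this is exactly what the layered relations \(G_k\) handle, since each index-block carries only the finite piece \(E_0^{(k)}\) and the index coordinate merely splits the space into countably many invariant clopen strips.
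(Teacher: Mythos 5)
Your proof is correct and follows essentially the same route as the paper: you disjointify the pieces via \(Y_n = X_n \setminus \bigcup_{m<n} X_m\) and reduce \(E\) to the relation \((n,s)\sim(m,t) \iff n=m \text{ and } s\,E_0\,t\) on \(\N \times 2^\N\), which is exactly the paper's argument. The only difference is that the paper cites the reducibility of this auxiliary relation to \(E_0\) as a known fact, whereas you supply a correct proof of it via hyperfiniteness and the universality of \(E_0\).
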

\begin{proof}
    Without loss of generality, we may assume that \(X_n\)'s are pairwise disjoint since otherwise we could replace $X_n$ by \(X_n \setminus \bigcup_{m < n}X_m\) for all $n\geq 1$. Then the statement follows from the fact that the Borel equivalence relation $\sim$ on the space \(\N \times 2^\N \) defined by \[(n,x) \sim (k,y) \iff n = k \text{ and } x E_0 y \] is Borel reducible to \(E_0\).
\end{proof}

The notion of treeability will also prove useful in the sequel. Recall that an equivalence relation is called \textit{countable} if all equivalence classes have at most countably many elements.
\begin{Definition}
        Let \(X\) be a standard Borel space and \(E\) be a Borel equivalence relation on \(X\). We say that \(E\) is \textit{treeable} if there exists a Borel acyclic graph \(G\), called a \textit{treeing} of \(E\), such that connected components of \(G\) coincide with $E$-classes.
\end{Definition}
Countable Borel equivalence relations with simple treeings are expected to have low Borel complexity, which is actually the case as stated below.
\begin{Proposition}[{\cite[Remark 6.8]{Kechris_Miller_topics_in_orbit_equivalence_relations}}]
\label{Proposition: treeing of hyperfinite quivalence}
 Let \(X\) be a standard Borel space and \(E\) be a countable Borel equivalence relation on \(X\). Suppose that \(G\) is a treeing of \(E\) such that the degree of \(G\) is \( 2\). Then \(E\leq _BE_0\).
\end{Proposition}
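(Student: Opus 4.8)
The plan is to prove that a countable Borel equivalence relation $E$ admitting a treeing $G$ of degree at most $2$ satisfies $E \leq_B E_0$. The key observation is that a connected acyclic graph in which every vertex has degree at most $2$ is extremely simple combinatorially: each connected component is either a finite path, a one-way infinite ray, or a two-way infinite line. Thus every $E$-class, viewed through the graph $G$, carries the structure of (a Borel copy of) a subgraph of $\mathbb{Z}$. The strategy is to exploit this linear structure to exhibit $E$ as hyperfinite, and then invoke that hyperfinite relations are Borel reducible to $E_0$.

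First I would make precise the local structure. Since $G$ is Borel and of bounded degree, by the Luzin--Novikov uniformization theorem one can Borel-enumerate the (at most two) $G$-neighbors of each point. Using this, I would aim to place a Borel linear order, or at least a Borel $\mathbb{Z}$-like indexing, on each component. Concretely, the idea is to build an increasing sequence of finite Borel equivalence relations $E_n \subseteq E$ whose union is $E$: let $E_n$ relate two points exactly when they lie in the same component and their $G$-distance to some canonically chosen ``base'' vertex differs by at most $n$ within a window, so that each $E_n$-class is finite and $E = \bigcup_n E_n$. Establishing that such an approximating sequence can be defined in a Borel way — in particular choosing basepoints or orientations measurably on the two-way-infinite components where there is no canonical endpoint — is the heart of the argument.

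The main obstacle I expect is exactly this measurable choice of orientation on the doubly infinite components, since there is no Borel way to pick a canonical ``zero'' or a canonical direction on each copy of $\mathbb{Z}$ simultaneously across all components. The standard device to circumvent this is the theory of hyperfiniteness of $\mathbb{Z}$-actions: a Borel equivalence relation generated by a single Borel automorphism (a $\mathbb{Z}$-action) is hyperfinite, a theorem of Slaman--Steel and Weiss. The plan is therefore to reduce to that setting. Using the degree-$\leq 2$ treeing, I would define a Borel partial successor function and complete it to a Borel involution-free bijection generating $E$ on the line components, handling the finitely-many-ray and finite-path components separately (these are routinely hyperfinite, indeed smooth on the finite pieces), and then combine the pieces via the decomposition principle of \Cref{Lemma: decomposition into hyperfinite parts}.

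Finally, having represented each class as an orbit of a single Borel bijection (equivalently, having exhibited $E$ as an increasing union of finite Borel equivalence relations), I conclude that $E$ is hyperfinite. By the universality of $E_0$ among hyperfinite Borel equivalence relations recorded in the excerpt, $E \leq_B E_0$, which is the desired conclusion. I would organize the write-up by first reducing to the three component types, disposing of the finite and ray cases as smooth (hence $\leq_B E_0$), and reserving the bulk of the argument for the two-way-infinite case where the Slaman--Steel--Weiss hyperfiniteness of $\mathbb{Z}$-actions is applied.
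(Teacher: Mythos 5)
The paper offers no proof of this Proposition to compare against---it is quoted directly from Kechris--Miller---so your proposal must stand on its own, and it has a genuine gap at exactly the step you yourself call ``the heart of the argument.'' Your outline (split components into finite paths, rays, and two-way infinite lines; dispose of the first two as smooth; handle the lines via hyperfiniteness of \(\mathbb{Z}\)-actions; glue with \Cref{Lemma: decomposition into hyperfinite parts}) is the standard and correct skeleton. The gap is the claim that you can ``define a Borel partial successor function and complete it to a Borel involution-free bijection generating \(E\) on the line components'' and only then apply Slaman--Steel--Weiss. Producing such a bijection that follows the graph is precisely the problem of Borel-selecting an orientation of every doubly infinite line, and this is impossible in general; invoking Slaman--Steel--Weiss does not circumvent the obstruction, because that theorem takes the Borel \(\mathbb{Z}\)-action as its \emph{input}. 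Concretely, let \(\alpha\) be irrational and consider on \(\mathbb{T}=\mathbb{R}/\mathbb{Z}\) the Borel involutions \(f(x)=-x\) and \(g(x)=\alpha-x\); after removing the countable saturation of the points with nontrivial stabilizer, the graph joining \(x\) to \(f(x)\) and \(g(x)\) is a Borel treeing of degree \(2\) whose components are exactly the two-way infinite lines given by the orbits of the infinite dihedral group \(\langle f,g\rangle\). A Borel successor choice along the graph amounts to a Borel set \(A\) with \(f(A)\cap A=\emptyset\) and \(g(\mathbb{T}\setminus A)\cap(\mathbb{T}\setminus A)=\emptyset\); since \(f\) and \(g\) preserve Lebesgue measure, this forces \(\mu(A)=1/2\) and makes \(A\) invariant (mod null) under \(g\circ f(x)=x+\alpha\), contradicting ergodicity of the irrational rotation. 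So no Borel (indeed no measurable) orientation exists, even though the conclusion of the Proposition does hold for this example. (If instead you allow the generating bijection to be an arbitrary Borel automorphism not following the graph, then its existence is essentially equivalent, by Dougherty--Jackson--Kechris, to the hyperfiniteness you are trying to prove, so assuming it is circular.)

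The repair is to apply the \(\mathbb{Z}\)-action theorem on a space where the successor is canonical and requires no choice: the space \(Y=\{(x,y): x\mathbin{G}y\}\) of \emph{ordered} edges over the line part, with the shift \(T(x,y)=(y,z)\), where \(z\) is the unique neighbour of \(y\) other than \(x\). This \(T\) is a Borel automorphism defined without any selection, so its orbit relation \(F\) is hyperfinite by Slaman--Steel--Weiss. The ordered edges of a single line form at most two \(T\)-orbits (one per orientation), swapped by the flip \((x,y)\mapsto(y,x)\); hence the relation ``lying on the same line,'' viewed on \(Y\), is an index-\(\leq 2\) extension of \(F\) and is hyperfinite by the closure of hyperfiniteness under finite-index extensions (Jackson--Kechris--Louveau). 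Finally, \(x\mapsto(x,s(x))\), with \(s(x)\) the least neighbour of \(x\) in a fixed Borel linear order, Borel reduces \(E\) restricted to the line components to that relation. With this replacement for your orientation step, the rest of your outline---smoothness of the finite and ray parts via Lusin--Novikov selectors, and the gluing via \Cref{Lemma: decomposition into hyperfinite parts} together with the universality of \(E_0\) among hyperfinite relations---goes through.
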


The following equivalence relation will come up very naturally when considering the continuum-connection relation on locally compact Polish spaces.

\begin{Definition}
    The equivalence relation \(E_1\) is defined by 
    \[xE_1y \iff \exists m\   \forall n \geq m\  \forall k\   x(n,k) = y(n,k)\]
    for all $x,y\in  2^{\N  \times \N }$.
\end{Definition}

\begin{Remark*}
A Borel equivalence relation that can be written as a countable increasing union of smooth equivalence relations is called \textit{hypersmooth}. The relation $E_1$ can be shown to be hypersmooth. Not only this, but $E_1$ is the universal hypersmooth equivalence relation (see \cite[Subsection 8.1]{Gao}).
\end{Remark*}
We introduce one more equivalence relation that will prove useful later.
\begin{Definition}
    The equivalence relation \(\R / \ell_\infty\)  is defined by
    \[x \R / \ell_\infty y \iff x-y \in \ell_\infty\]
    for all \(x,y\in \R^\N \).
\end{Definition}
This relation can again be shown to be a universal Borel equivalence relation on a special class of Borel equivalence relations.

\begin{Theorem}[{\cite[Theorem 8.4.2]{Gao}}]
\label{Theorem: l infinity is universal K sigma}
    Let \(E\) be a \(K_\sigma\) equivalence relation on a Polish space \(X\). Then \(E \Bleq \R / \ell_\infty\).
\end{Theorem}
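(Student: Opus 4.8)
The plan is to realize \(E\) as a well-behaved union of compact sets and then to encode the induced filtration of each equivalence class as a point of \(\R^\N\), so that belonging to the same class becomes equivalence modulo a bounded sequence.

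First I would record that a \(K_\sigma\) equivalence relation forces the ambient space to be \(\sigma\)-compact: the diagonal is a closed subset of \(E=\bigcup_n K_n\), so \(\Delta_X=\bigcup_n(\Delta_X\cap K_n)\) is \(\sigma\)-compact and hence so is \(X\). Writing \(E=\bigcup_n K_n\) with \(K_n\) compact, increasing and symmetric, the key normalization is to arrange \(K_n\circ K_n\subseteq K_{n+1}\). This can be done recursively: \(K_n\circ K_n\) is the projection of the compact set \(\{(x,y,z):(x,y),(y,z)\in K_n\}\), hence compact, and it lies in \(E\) by transitivity, so it may be absorbed into the next term together with the original \(n\)-th compact set. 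With this in hand, the gauge \(\delta(x,y)=\min\{n:(x,y)\in K_n\}\) (with \(\delta=\infty\) off \(E\)) is a Borel, symmetric, extended-valued function that is finite exactly on \(E\) and satisfies the ultrametric-type inequality \(\delta(x,z)\le\max(\delta(x,y),\delta(y,z))+1\).

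Next I would translate class membership into a shift condition on the nested sections \(K_n[x]=\{y:(x,y)\in K_n\}\), which are compact and increase to \([x]\). A short computation using the normalization shows that \(xEy\) holds if and only if there is a finite \(c\) with \(K_n[x]\subseteq K_{n+c}[y]\) and \(K_n[y]\subseteq K_{n+c}[x]\) for all \(n\); that is, the two filtrations intertwine after a finite shift. The strategy is then to embed the hyperspace of compacta into the Hilbert cube by \(K\mapsto(\min(1,\dist(d_j,K)))_j\) for a fixed countable dense set \(\{d_j\}\) — this encodes each section faithfully, even a nowhere dense one, through distances rather than membership — and to assemble the encoded sections \((K_n[x])_n\), suitably weighted by level, into a single point \(f(x)\in\R^\N\), aiming for \(f(x)-f(y)\in\ell_\infty\) exactly when the filtrations intertwine with a finite shift.

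The main obstacle is precisely this last conversion: turning a finite shift between two increasing filtrations of compact sets into a bounded \(\ell_\infty\)-difference. At a common level the sections \(K_n[x]\) and \(K_n[y]\) of two equivalent points need not be close — they match only after the shift — so a naive level weighting is self-defeating: bounded weights make all points equivalent, while unbounded weights blow up differences within a single class. Designing a shift-tolerant scaling that is simultaneously bounded within classes and unbounded across them is the crux, and it is compounded by the absence of any Borel choice of basepoint per class (as \(E\) need not be smooth) together with the fact that topological probes are blind to thin classes. To manage the thin classes I would split off the \(E\)-invariant Borel set of points lying in compact classes: there \(E\) has closed classes and is therefore smooth by \Cref{Theorem: equivalence relation with closed classes is smooth}, hence Borel reducible to the identity and, via \(s\mapsto(sn)_n\), to \(\R/\ell_\infty\). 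The two reductions are then glued on disjoint blocks of coordinates, reserving one block that carries a fixed unbounded sequence on one piece and zero on the other to force infinite distance between the pieces. It suffices throughout to reduce to the compact model \(\prod_n\{0,\dots,n\}\) with \(\sup_n|x(n)-y(n)|<\infty\), which is bireducible with \(\R/\ell_\infty\).
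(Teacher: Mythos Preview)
The paper does not prove this theorem; it is quoted from \cite{Gao} as a preliminary result, so there is no in-paper argument against which to compare your attempt. Judged on its own, your proposal is not a proof. The preliminary normalizations are correct and standard: $X$ must be $\sigma$-compact; the $K_n$ may be taken symmetric, increasing, containing the diagonal, and satisfying $K_n\circ K_n\subseteq K_{n+1}$; the gauge $\delta$ then obeys the stated inequality; and $xEy$ is equivalent to the filtrations $(K_n[x])_n$, $(K_n[y])_n$ interlacing after a finite shift. But there the argument stops. You explicitly identify the conversion of that finite shift into an $\ell_\infty$-bounded difference as ``the crux'' and then do not carry it out; splitting off the points with compact $E$-class peels away only a smooth (hence easy) piece and leaves the substantive case entirely open. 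You also use without justification that the set of points with compact class is Borel, which is not automatic for a general $K_\sigma$ relation.

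The specific route you propose --- encode each section $K_n[x]$ in the Hilbert cube via distances to a fixed dense sequence and then weight by level --- runs into exactly the obstruction you diagnose, and no weighting repairs it: boundedness within classes and unboundedness across classes pull in opposite directions. The actual argument in \cite[\S8.4]{Gao} (due to Rosendal) does not attempt to encode the sections as metric objects. After arranging $X$ to be compact, it builds integer-valued Borel coordinates directly from the gauge $\delta$ and a countable family of test sets, in such a way that the finite-shift condition becomes, by design, a uniform bound on coordinate differences. That combinatorial encoding is the whole content of the theorem; your proposal reaches the doorstep and stops.
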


All of the equivalence relations listed above are Borel. Moreover, it can be shown that they are of strictly increasing complexity.
\begin{Theorem}[{\cite[Proposition 6.1.7, Theorem 8.1.6, Section 15.2]{Gao}}]
    The equivalence relation \(E_0\) is not smooth and \(E_0 <_B E_1 <_B \R / \ell_\infty\).
\end{Theorem}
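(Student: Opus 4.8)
The plan is to split the statement into two kinds of claims: the positive reductions establishing \(E_0 \Bleq E_1 \Bleq \R / \ell_\infty\), which are routine, and the three negative assertions (\(E_0\) is non-smooth, \(E_1 \not\Bleq E_0\), and \(\R / \ell_\infty \not\Bleq E_1\)), which carry all the content.

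First I would dispose of the reductions. For \(E_0 \Bleq E_1\), I send \(x \in 2^\N\) to the array \(y \in 2^{\N \times \N}\) given by \(y(n,k) = x(n)\) for all \(k\); since each column \(n\) of \(y\) is constant, eventual agreement of the columns of two such images is equivalent to eventual agreement of the underlying sequences, so the map is a Borel reduction. For \(E_1 \Bleq \R / \ell_\infty\), I would observe that \(E_1\) is \(K_\sigma\): writing \(E_1 = \bigcup_m R_m\) with \(R_m = \{(x,y) : x(n,k) = y(n,k) \text{ for all } n \geq m \text{ and all } k \in \N\}\), each \(R_m\) is a closed and hence compact subset of the compact space \(2^{\N \times \N} \times 2^{\N \times \N}\). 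Thus \Cref{Theorem: l infinity is universal K sigma} applies and yields \(E_1 \Bleq \R / \ell_\infty\) immediately.

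For the non-smoothness of \(E_0\) I would use ergodicity. The relation \(E_0\) is the orbit equivalence relation of the countable group \(\bigoplus_{n \in \N} \Z/2\Z\) acting on \(2^\N\) by coordinatewise addition; this action preserves the uniform product measure \(\mu\) and is ergodic by Kolmogorov's zero–one law. If \(f \colon 2^\N \to Y\) were a Borel reduction of \(E_0\) to the identity on a standard Borel space \(Y\), then \(f\) would be constant on \(E_0\)-classes, hence invariant under the action, hence \(\mu\)-almost everywhere equal to a fixed value \(y_0\) by ergodicity. But then \(f^{-1}(\{y_0\})\) has full measure, while being a reduction forces it to be a single \(E_0\)-class, which is countable and therefore \(\mu\)-null; this contradiction proves \(E_0\) is not smooth. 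Combined with the reductions above, it also shows that none of the three relations is smooth.

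The two remaining separations are the heart of the matter and the main obstacle. For \(E_1 \not\Bleq E_0\) I would invoke the structural fact (Kechris–Louveau) that \(E_1\) is not Borel reducible to any countable Borel equivalence relation; since every \(E_0\)-class is countable, this gives \(E_1 \not\Bleq E_0\). A self-contained argument would exploit the product picture \(2^{\N \times \N} = \prod_{n \in \N} 2^\N\), in which \(E_1\)-equivalence means agreement on all but finitely many coordinates: assuming a reduction \(f\) into a countable relation, one builds by recursion a tree of finite perturbations of a generic point that alter the tails independently, and extracts a contradiction with the countability of the target classes via a Baire-category (or measure) fusion argument. For \(\R / \ell_\infty \not\Bleq E_1\), I would first record that, because \(E_1\) is the universal hypersmooth relation, a Borel equivalence relation is hypersmooth if and only if it is Borel reducible to \(E_1\): pulling back an increasing smooth exhaustion of \(E_1\) through \(f \times f\) produces such an exhaustion of the reduced relation. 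It therefore suffices to show that \(\R / \ell_\infty\) is not hypersmooth. The obstruction is that \(\ell_\infty\) permits arbitrarily large yet finite coordinatewise differences, whereas a hypersmooth relation is an increasing union of smooth pieces; a category argument on \(\R^\N\) showing that no such countable exhaustion can track the unbounded-but-finite behaviour of the \(\ell_\infty\)-norm supplies the contradiction. I expect this last step—the non-hypersmoothness of \(\R / \ell_\infty\)—to be the most delicate point, and it is exactly the content carried by the cited Section 15.2.
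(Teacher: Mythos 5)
First, a point of reference: the paper does not prove this theorem at all — it is quoted verbatim from Gao with the citation [Proposition 6.1.7, Theorem 8.1.6, Section 15.2], so any comparison is against the literature rather than an in-paper argument. Most of your attempt is sound and fills in what those citations contain: the two positive reductions are correct (the constant-columns map for \(E_0 \Bleq E_1\), and the observation that \(E_1\) is \(K_\sigma\) because each \(R_m\) is closed in the compact square \(2^{\N\times\N}\times 2^{\N\times\N}\), so \Cref{Theorem: l infinity is universal K sigma} applies); the ergodicity argument for non-smoothness of \(E_0\) is the standard complete proof; and invoking Kechris--Louveau's theorem that \(E_1\) is not Borel reducible to any countable Borel equivalence relation legitimately settles \(E_1 \not\Bleq E_0\) (this is exactly the content of Gao's Theorem 8.1.6). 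Your parenthetical ``self-contained'' fusion sketch for that step is not a proof, but the citation carries it.

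The genuine gap is the final separation \(\R/\ell_\infty \not\Bleq E_1\). You correctly reduce it to showing that \(\R/\ell_\infty\) is not hypersmooth, but then you only gesture at ``a category argument on \(\R^\N\)'' tracking the unbounded-but-finite behaviour of the \(\ell_\infty\)-norm; no such argument is given, and it is unclear that one in this form exists: non-hypersmoothness is a statement about \emph{all} Borel exhaustions by smooth relations, so there is no concrete exhaustion tied to the norm for a Baire-category argument to attack. The standard way to close this step is structural rather than pointwise. Since \(\R/\ell_\infty\) is universal for \(K_\sigma\) equivalence relations (\Cref{Theorem: l infinity is universal K sigma}), it Borel reduces, from above, every \(K_\sigma\) countable Borel equivalence relation; take \(E_\infty\), the orbit equivalence relation of the shift action of the free group \(F_2\) on \(2^{F_2}\), which is \(K_\sigma\) (a countable union of graphs of homeomorphisms of a compact space) and is not hyperfinite (by measure-theoretic non-amenability of \(F_2\)). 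If \(\R/\ell_\infty \Bleq E_1\), then \(E_\infty \Bleq E_1\), hence \(E_\infty\) would be hypersmooth by your own pull-back observation; but a countable hypersmooth Borel equivalence relation is hyperfinite (Kechris--Louveau), contradicting the non-hyperfiniteness of \(E_\infty\). Without this (or an explicit citation replacing it), your proposal establishes strictly less than the statement: the inequality \(\R/\ell_\infty \not\Bleq E_1\) is asserted, not proved.
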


\subsection{Continuum Theory and Triods}
Throughout the paper, we will extensively work with continua, for which we need several results from continuum theory. We refer the reader to \cite{Nadler_continuum_theory} for any details and undefined notions.

Recall that a continuum \(K\) is called \textit{unicoherent} if the intersection of any two subcontinua of \(K\) is connected and \textit{hereditarily unicoherent} if every subcontinuum of \(K\) is unicoherent. In the sequel, we shall show that on some special classes of continua the studied equivalence relations have some nice properties. A continuum is called \textit{arc-like} (respectively \textit{circle-like}, \textit{tree-like}) if it is homeomorphic to an inverse limit of arcs (respectively circles, trees).

We recall the definitions of different types of triods and discuss the relations between them.
\begin{Definition}
   A \textit{ simple triod } is a continuum which is a union of three arcs intersecting at a single point.
\end{Definition}
\begin{Definition}
    A continuum \(K\) is called a \textit{triod} provided that there is \(L \in \mathcal{C}(K)\) such that \(K \setminus L\) has at least three components.
\end{Definition}
There is another notion of triod that is even less restrictive.
\begin{Definition}
    A continuum \(K\) is called a \textit{weak triod} if it is a union of three intersecting subcontinua, none of which is contained in the union of the other two.
\end{Definition}
Plainly, any simple triod is a triod. It turns out that even though the notion of weak triod is weaker than the notion of triod, any weak triod contains a triod.
\begin{Theorem}[{\cite[Theorem 1.8]{Sorgenfrey_triodic_continua}}]
\label{Theorem: weak triod contains triod}
    Every continuum containing a weak triod contains a triod.
\end{Theorem}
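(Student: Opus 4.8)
The plan is to reduce to the weak triod itself and then manufacture a subcontinuum whose removal leaves three pieces. Since \enquote{containing a triod} is inherited by every larger continuum, it suffices to show that a weak triod $W=A\cup B\cup C$ contains a triod. Fix a point $p\in A\cap B\cap C$ and, using that none of the three continua lies in the union of the other two, fix witnesses $a\in A\setminus(B\cup C)$, $b\in B\setminus(A\cup C)$ and $c\in C\setminus(A\cup B)$. (I use here that the three subcontinua share a common point; without this the statement would fail, as the three arcs of a simple closed curve show, so this is exactly the feature the argument must exploit.)

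First I would trim the three \enquote{arms}. Replace $A$ by a subcontinuum $A_0\subseteq A$ that is \emph{irreducible} between $p$ and $a$, that is, minimal among subcontinua containing both points; such a minimal subcontinuum exists by a routine Zorn's lemma argument, since a nested intersection of subcontinua is again a subcontinuum \cite{Nadler_continuum_theory}. Define $B_0$ and $C_0$ analogously. Then $W_0:=A_0\cup B_0\cup C_0\subseteq W$ still contains $p$ in the common intersection and still has each witness point outside the union of the other two arms, so $W_0$ is again a weak triod; moreover it records the crucial rigidity that no proper subcontinuum of $A_0$ contains both $p$ and $a$, and similarly for $B_0$ and $C_0$.

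The heart of the argument is to produce a subcontinuum $L\subseteq W_0$, concentrated where the arms overlap, such that $a$, $b$ and $c$ fall into three distinct components of $W_0\setminus L$; this exhibits $W_0$ (hence $W$, hence the original continuum) as a triod. The natural candidate for $L$ is built from the pairwise intersections $A_0\cap B_0$, $A_0\cap C_0$ and $B_0\cap C_0$, each of which contains $p$. To count the components of the complement I would invoke the Boundary Bumping Theorem to control the closures of the components, and use the irreducibility of the arms to keep the component of $a$ from merging with those of $b$ and $c$: were the free part of the $A_0$-arm to reconnect to $B_0$ or $C_0$ away from the core, one could cut out a proper subcontinuum of an arm still joining its two distinguished points, against the minimality arranged above.

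The main obstacle is the connectivity of the separator $L$. Since general continua need not be locally connected — the arms could, for instance, be hereditarily indecomposable — the pairwise intersections need not be connected and need not link up through $p$, so the naive union of the overlaps is typically not a subcontinuum and cannot serve as the witness $L$ directly. Overcoming this, namely enlarging the overlap set to a genuine subcontinuum that still avoids $a$, $b$, $c$ while leaving three complementary components (possibly after passing to a smaller subcontinuum $T'\subseteq W_0$), is the delicate step, and it is precisely why the statement promises a triod \emph{contained in} $W$ rather than asserting that $W$ is itself a triod.
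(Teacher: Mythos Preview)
The paper does not prove this theorem; it is quoted verbatim from Sorgenfrey's original article, so there is no in-house argument to compare against. Your task was therefore really to reproduce (or replace) Sorgenfrey's proof, and on that score your proposal is an outline rather than a proof: you set up the picture correctly, but you explicitly stop at the one step that carries all the content.

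Concretely, everything through the trimming to irreducible arms $A_0,B_0,C_0$ is fine and standard. The gap is exactly where you say it is: you do not construct the separating subcontinuum $L$. Declaring that $L$ should be \enquote{built from the pairwise intersections} and then invoking Boundary Bumping and irreducibility in the abstract does not establish anything, because --- as you yourself note --- those intersections need not be connected and their union need not be a continuum. The sentence \enquote{Overcoming this \dots\ is the delicate step} is an acknowledgement that the proof is missing its main idea, not a proof. In particular, the heuristic \enquote{were the free part of the $A_0$-arm to reconnect to $B_0$ or $C_0$ away from the core, one could cut out a proper subcontinuum \dots} is not justified: irreducibility of $A_0$ between $p$ and $a$ says nothing about how $A_0$ might meet $B_0$ or $C_0$ away from $p$, and no contradiction has been derived.

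If you want to complete this along Sorgenfrey's lines, the actual work is a case analysis on how the irreducible arms overlap, producing either a genuine triod directly or a smaller weak triod to which one can recurse; the argument does not proceed by writing down a single candidate $L$ in one stroke. As written, your proposal identifies the difficulty but does not resolve it.
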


Triods are crucial notions that will come up very often in the study of all of the connection equivalence relations. The main reason why triods are of such importance is the following result of Moore.
\begin{Theorem}[{\cite[Theorem 1]{Moore_concerning_triods_in_the_plane}}]
\label{Theorem: pairwise disjoint family of triods in the plane is countable}
    Any family of pairwise disjoint triods in the plane is countable.
\end{Theorem}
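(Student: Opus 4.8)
The plan is to attach to each triod a finite piece of rational combinatorial data, drawn from a countable set, in such a way that no uncountable pairwise disjoint family can share the same data; countability of the family then follows at once.

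First I would fix a countable basis of the plane consisting of open disks with rational centers and rational radii. Given a triod \(T\), pick a witnessing subcontinuum \(L\) and three distinct components \(M_1,M_2,M_3\) of \(T\setminus L\). Since each \(M_i\) is a component of the relatively open set \(T\setminus L\), one has \(M_i\cap L=\emptyset\) and \(M_i\cap\bar M_j=\emptyset\) for \(j\neq i\); hence any point \(x_i\in M_i\) has positive distance to the closed set \(L\cup\bigcup_{j\neq i}\bar M_j\), and, choosing also \(x_0\in L\), I can select four pairwise disjoint basic disks \(D_0,D_1,D_2,D_3\) with \(x_0\in D_0\), \(x_i\in D_i\), and \(\bar D_i\ (i\geq 1)\) disjoint from \(L\) and from \(\bar M_j\ (j\neq i)\). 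I record the quadruple \((D_0;D_1,D_2,D_3)\) as the data of \(T\). There are only countably many such quadruples.

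It therefore suffices to rule out an uncountable family \(\mathcal F\) of pairwise disjoint triods all carrying the same data \((D_0;D_1,D_2,D_3)\). For each \(T\in\mathcal F\) the three continua \(L_T\cup\bar M^T_i\) join a point of \(D_0\) to a point of \(D_i\), and the branch, namely the core \(L_T\), meets the fixed disk \(D_0\). Capping each arm off by an arc inside \(D_i\) and inside \(D_0\), I can turn the configuration of \(T\) into a compact set \(\Gamma_T\) which, together with arcs along the boundaries \(\partial D_i\), carries a simple closed curve separating one target disk from another. A second triod \(T'\in\mathcal F\) has its core in the same disk \(D_0\) and its three arms must still reach \(D_1,D_2,D_3\); by Janiszewski's theorem, equivalently the unicoherence of the sphere, at least one arm of \(T'\) then cannot reach its target without meeting \(\Gamma_T\), and after unwinding the capping arcs this forces \(T\cap T'\neq\emptyset\), contradicting disjointness once the family is large enough to realize all the required relative positions.

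The main obstacle is exactly this planar separation step, and two genuine difficulties must be handled. First, triods need not be arcwise connected, so the arms \(\bar M_i\) are arbitrary continua rather than arcs; I would avoid arcs entirely and run the crossing argument through Janiszewski's theorem and the unicoherence of \(S^2\), which speak directly about continua. Second, three pairwise disjoint disks can be joined in a branched way by two disjoint triods when their branch points lie on opposite sides, say inside versus outside the triangle spanned by \(D_1,D_2,D_3\); this is precisely why localizing the branch to the fixed disk \(D_0\) is essential, and why the argument must extract, from an uncountable family, finitely many members whose cores sit in \(D_0\) in mutually separating position. Making this extraction and the ensuing crossing rigorous, that is, showing that a fixed quadruple supports at most countably many pairwise disjoint triods, is the crux; the coding and the reduction around it are routine.
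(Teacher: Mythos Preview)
The paper does not contain a proof of this statement at all: it is quoted as a classical result of Moore and cited to \cite{Moore_concerning_triods_in_the_plane} without argument. So there is no paper proof to compare your proposal against.

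On the merits of your proposal, the reduction step is fine and standard: assigning to each triod a quadruple of rational disks and passing to an uncountable subfamily with common data is exactly how one begins. The problem is that you openly leave the decisive step unfinished. You write that ``making this extraction and the ensuing crossing rigorous \dots\ is the crux,'' and indeed it is the entire content of Moore's theorem once the coding is done; nothing you have written establishes that a fixed quadruple \((D_0;D_1,D_2,D_3)\) can support only countably many pairwise disjoint triods.

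Your sketch of that step also has a specific defect. You propose to cap the arms of \(T\) by arcs inside the \(D_i\) and inside \(D_0\) to form \(\Gamma_T\), and then argue via Janiszewski that an arm of a second triod \(T'\) must hit \(\Gamma_T\). But the arms of \(T'\) enter the very disks \(D_0,D_1,D_2,D_3\) in which you have added the capping arcs, so an intersection of \(T'\) with \(\Gamma_T\) may occur on a capping arc rather than on \(T\) itself; your ``unwinding'' remark does not explain how to convert such an intersection into \(T\cap T'\neq\emptyset\). This is precisely the subtlety Moore's original argument (and later simplifications) has to handle, typically by an inductive or ordering argument on how the continua to the three target disks are nested, not by a single two-triod crossing. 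As it stands, the proposal is a correct outline of the easy half together with an acknowledgment that the hard half remains to be done.
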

This result is also the main reason why there are many essential differences between the behaviour of the connection equivalence relations in the plane and in spaces of higher dimensions.

The following properties of atriodic continua will be of some use as well. Recall that a continuum is called \textit{atrodic} if it does not contain a triod.

\begin{Lemma}[{\cite[Lemma 3.3]{The_hyperspace_for_triodic_continua}}]
\label{Lemma: triodic number of comp.}
    Let \(K\) be an atriodic continuum and let \(A,B \in \mathcal{C}(K)\). Then both \(B \setminus A\) and \(A \cap B\) have at most two components.
\end{Lemma}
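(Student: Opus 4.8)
The plan is to prove both bounds by contradiction, producing a triod inside \(K\) whenever one of the two sets has three or more components; since every subcontinuum of an atriodic continuum is again atriodic, any triod located inside a subcontinuum of \(K\) already contradicts the hypothesis. First I would dispose of the trivial case: if \(A\cap B=\emptyset\), then \(B\setminus A=B\) is connected and \(A\cap B\) has no components, so I may assume \(A\cap B\neq\emptyset\), whence \(A\cup B\in\mathcal{C}(K)\). The bound on \(B\setminus A\) is then immediate: since \(A\in\mathcal{C}(A\cup B)\) and \((A\cup B)\setminus A=B\setminus A\), if \(B\setminus A\) had at least three components then \(A\cup B\) would be a triod with core \(A\), contradicting atriodicity. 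Hence \(B\setminus A\) has at most two components, and symmetrically so does \(A\setminus B\). This settles the first half of the statement.

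For \(A\cap B\) I would argue by contradiction. If \(A\subseteq B\) or \(B\subseteq A\) the intersection is connected and we are done, so assume neither inclusion holds and suppose \(A\cap B\) has at least three components \(D_1,D_2,D_3\). Write \(\{P_k\}\) for the components of \(A\setminus B\) and \(\{Q_j\}\) for those of \(B\setminus A\); by the previous paragraph there are at most two of each, and both families are nonempty because neither of \(A,B\) contains the other. The key point is a connectivity analysis of how these few regions must glue the intersection-components together. Since \(A\) is a continuum, a boundary bumping argument forces the bipartite incidence relation that links \(D_i\) to \(P_k\) whenever \(\overline{P_k}\cap D_i\neq\emptyset\) to be connected, and likewise for \(B\) with the \(Q_j\). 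A short counting argument then shows that three intersection-components cannot be assembled into a connected whole by at most two regions on each side without some branching: either some region \(\overline{P_k}\) or \(\overline{Q_j}\) meets three of the \(D_i\), or some \(D_i\) meets three of the regions \(\overline{P_k},\overline{Q_j}\). Indeed, if every \(D_i\) met at most two regions, then (being contained in both \(A\) and \(B\)) each \(D_i\) would meet exactly one \(P_k\) and one \(Q_j\), and the \(A\)-incidence relation could not be connected unless a single \(P_k\) met all three \(D_i\).

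From such a branching configuration I would extract a weak triod. In the favourable case, where a component \(D_i\) meets three distinct regions \(R_1,R_2,R_3\in\{\overline{P_k},\overline{Q_j}\}\), the three subcontinua \(D_i\cup R_1\), \(D_i\cup R_2\), \(D_i\cup R_3\) have connected union and none lies in the union of the other two, since each \(R_m\) contains points of its defining open component, and such points lie neither in \(D_i\) nor in the closures of the other components. This is exactly a weak triod, so by \Cref{Theorem: weak triod contains triod} it contains a triod, contradicting atriodicity. As \(A\cup B\subseteq K\), the contradiction is inside \(K\), and the proof concludes.

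I expect the main obstacle to lie in two places. The first is making the incidence-graph connectivity rigorous: the clean statement is a quasicomponent and clopen-separation argument, built on boundary bumping, showing that a disconnection of the incidence relation would produce a nontrivial clopen partition of \(A\) (respectively \(B\)). The second, more delicate, is the symmetric branching case in which two regions \(\overline{P_1}\) and \(\overline{Q_1}\) each meet all three \(D_i\); here the naive three subcontinua need not give a weak triod (the \(D_i\) may be arranged like points on an arc), so one must pass to irreducible subcontinua of the regions and of the \(D_i\), and show that the resulting \emph{theta-like} configuration still branches, handling the degenerate cases in which a whole component \(D_i\) is absorbed into the boundary of a region. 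This reduction to a genuine weak triod, rather than the counting that locates the branch point, is the heart of the argument.
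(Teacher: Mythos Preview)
The paper does not give its own proof of this lemma; it is quoted verbatim from \cite[Lemma 3.3]{The_hyperspace_for_triodic_continua} and used as a black box. So there is nothing to compare your approach against, and what follows is an assessment of your sketch on its own terms.

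Your argument for \(B\setminus A\) is clean and correct: once \(A\cap B\neq\emptyset\), the continuum \(A\cup B\) with core \(A\) is a triod inside \(K\) as soon as \((A\cup B)\setminus A=B\setminus A\) has three components.

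For \(A\cap B\), the plan is sound and you have correctly located the hard step. A few points to sharpen. First, you write ``at least three components \(D_1,D_2,D_3\)'' but then argue as if there were exactly three; since \(A\cap B\) may have infinitely many components you should first pass to a partition of \(A\cap B\) into three nonempty relatively clopen pieces, which is possible because components coincide with quasicomponents in compact Hausdorff spaces. Second, your parenthetical ``(being contained in both \(A\) and \(B\))'' is not a justification that each \(D_i\) meets at least one \(\overline{P_k}\) and one \(\overline{Q_j}\); that is precisely the incidence--connectivity step you later flag as the first obstacle, and it does require the clopen--separation argument you describe rather than a one--line remark. Third, the branching case you do carry out, where some \(D_i\) meets three region closures, really does give a weak triod: each \(P_k\) (respectively \(Q_j\)) is disjoint from \(D_i\), from the closures of the other \(P\)--components, and from every \(\overline{Q_j}\subset B\), so each of the three continua \(D_i\cup\overline{R_m}\) sticks out of the union of the other two. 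The remaining case---a single \(\overline{P_1}\) and a single \(\overline{Q_1}\) each meeting all three \(D_i\)---is, as you say, the heart of the matter, and your ``pass to irreducible subcontinua and handle degeneracies'' is a plan rather than a proof. In short, the argument is incomplete exactly where you say it is; the cited reference presumably supplies those missing details.
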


\begin{Lemma}[{\cite[Lemma 3.5]{The_hyperspace_for_triodic_continua}}]
\label{Lemma: triodic complemet connected}
    Let \(K\) be an atriodic continuum and let \(A,B \in \mathcal{C}(K)\) be such that \(A \cap B\) is not connected. Then \(B \setminus A\) is connected.
\end{Lemma}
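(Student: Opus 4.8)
The plan is to argue by contradiction and manufacture a triod, contradicting that $K$ is atriodic. First I would pin down the shape of the two sets. Since $A,B\in\mathcal{C}(K)$, the set $A\cap B$ is compact, and it is disconnected by hypothesis; so by \Cref{Lemma: triodic number of comp.} it has exactly two components, say $P$ and $Q$, which are disjoint subcontinua of $K$. Now suppose toward a contradiction that $B\setminus A$ is also disconnected. Again by \Cref{Lemma: triodic number of comp.} it has exactly two components $U$ and $V$. Because there are only two of them, they are relatively clopen in $B\setminus A$, and since $B\setminus A$ is open in $B$ (as $A$ is closed), both $U$ and $V$ are open in $B$. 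Their closures $\overline U,\overline V$ are therefore subcontinua of $K$, and $\overline U\setminus U\subseteq A\cap B=P\cup Q$ together with the analogous inclusion for $V$: a point of $\overline U\setminus U$ lying outside $A$ would lie in the open set $V$, impossible since $V$ is disjoint from $\overline U$. These boundary sets are nonempty because $U$ is a proper nonempty open subset of the connected space $B$.

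The crucial step I would isolate is a small separation argument showing that $\overline U$ and $\overline V$ meet a common component of $A\cap B$. Each of $\overline U\setminus U$ and $\overline V\setminus V$ is a nonempty subset of $P\cup Q$, so each of $\overline U,\overline V$ meets $P$ or $Q$. If they met different ones exclusively, say (after relabelling) $\overline U$ meets only $P$ and $\overline V$ meets only $Q$, then $\overline U\cup P$ and $\overline V\cup Q$ would be disjoint closed sets with union $B$: indeed $\overline U\cap\overline V\subseteq P\cup Q$ together with $\overline U\cap Q=\emptyset=\overline V\cap P$ forces $\overline U\cap\overline V=\emptyset$, and $P\cap Q=\emptyset$. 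This contradicts the connectedness of $B$. Hence, after relabelling, both $\overline U$ and $\overline V$ meet $P$.

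With a shared component in hand I would build a weak triod and invoke \Cref{Theorem: weak triod contains triod}. I would take the three subcontinua $C_1=A$, $C_2=\overline U\cup P$, $C_3=\overline V\cup P$; each is connected because $\overline U$ and $\overline V$ meet $P$, their union is the continuum $A\cup B$, and they all contain $P$, so they pairwise intersect. It remains to verify that none is contained in the union of the other two: $C_1=A\not\subseteq C_2\cup C_3\subseteq B$, since $A\subseteq B$ would make $A\cap B=A$ connected; while $C_2\not\subseteq C_1\cup C_3=A\cup\overline V$ and $C_3\not\subseteq C_1\cup C_2=A\cup\overline U$ because $U$ and $V$ are nonempty and disjoint from $A$, from $\overline V$, and from $\overline U$ respectively. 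Thus $A\cup B$ is a weak triod, so by \Cref{Theorem: weak triod contains triod} it contains a triod; as $A\cup B\subseteq K$, this contradicts atriodicity of $K$. Therefore $B\setminus A$ is connected.

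I expect the main obstacle to be the separation argument of the second paragraph, which guarantees the shared component of $A\cap B$, together with the topological bookkeeping that the two components of $B\setminus A$ are genuinely open in $B$ with boundaries inside $P\cup Q$ (this is exactly where the bound of two components from \Cref{Lemma: triodic number of comp.} is used). Once the shared component is secured, the passage to a weak triod and the verification of the non-containment conditions are routine.
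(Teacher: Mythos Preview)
The paper does not supply its own proof of this lemma; it is quoted verbatim from \cite[Lemma 3.5]{The_hyperspace_for_triodic_continua} and used as a black box. Your argument is correct: the reduction via \Cref{Lemma: triodic number of comp.} to two components $P,Q$ of $A\cap B$ and two components $U,V$ of $B\setminus A$, the separation step forcing $\overline U$ and $\overline V$ to share a component of $A\cap B$, and the resulting weak triod $A,\ \overline U\cup P,\ \overline V\cup P$ together with \Cref{Theorem: weak triod contains triod} all go through exactly as you describe.
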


We will also use the following sufficient condition for a continuum to contain a triod.

\begin{Lemma}
\label{Lemma: condition for triod containing}
Let \(K\) be a continuum such that the following holds: There are two points \(x,y \in K\) and three pairwise distinct continua \(K_1,K_2,K_3 \subset K\) such that 
\begin{enumerate}
    \item \(K_i\) is irreducible between \(x \text{ and }y\) for \(i \in \{1,2,3\}\),
    \item \(K = \displaystyle K_1\cup K_2\cup K_3\).
\end{enumerate}
Then \(K\) contains a triod.
\end{Lemma}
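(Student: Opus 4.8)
The plan is to produce a weak triod inside $K$ and then invoke \Cref{Theorem: weak triod contains triod}. The three given continua pairwise intersect, since each contains $x$, so their union is connected; moreover, distinctness together with irreducibility already yields $K_i\not\subseteq K_j$ for $i\neq j$ (a subcontinuum of $K_j$ containing both $x$ and $y$ must be all of $K_j$). Hence the only obstruction to $\{K_1,K_2,K_3\}$ being a weak triod is that one of them is contained in the union of the other two. This dichotomy organises the whole argument.

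First I would dispose of the easy case: if no $K_i$ is contained in $K_j\cup K_k$, then $K=K_1\cup K_2\cup K_3$ is a union of three intersecting subcontinua, none contained in the union of the other two, i.e.\ a weak triod, and \Cref{Theorem: weak triod contains triod} immediately gives a triod in $K$. The entire content therefore lies in the remaining case, say $K_1\subseteq K_2\cup K_3$, in which $K=K_2\cup K_3$ and $K_1$ is a \emph{third}, distinct continuum irreducible between $x$ and $y$ sitting inside this union. One should keep in mind here that the three continua need \emph{not} form a weak triod: for instance, three arcs from $x$ to $y$ that share long common subsegments can satisfy $K_1\subseteq K_2\cup K_3$, and the triod then arises at a ``branch point'' created by $K_1$ rather than from the three continua outright.

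In this second case I would argue by contradiction, assuming $K$ is atriodic so that \Cref{Lemma: triodic number of comp.} and \Cref{Lemma: triodic complemet connected} apply to every pair of subcontinua of $K$. Irreducibility and distinctness force $K_2\cap K_3$ to be disconnected (a connected intersection through $x,y$ would be a proper subcontinuum of $K_2$ joining $x$ and $y$); by \Cref{Lemma: triodic number of comp.} it then has exactly two components, $P\ni x$ and $Q\ni y$, and by \Cref{Lemma: triodic complemet connected} both $K_2\setminus K_3$ and $K_3\setminus K_2$ are connected. The crucial leverage from irreducibility is the principle that \emph{any} subcontinuum of $K_3$ meeting both $P$ and $Q$ must contain all of $K_3\setminus K_2$: adjoining $P\cup Q$ to such a subcontinuum yields a subcontinuum of $K_3$ through $x$ and $y$, hence all of $K_3$. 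The same holds with $K_2$ and $K_3$ interchanged. Since $K_1$ is distinct from $K_2$ and $K_3$, the sets $\overline{K_1\setminus K_2}\subseteq K_3$ and $\overline{K_1\setminus K_3}\subseteq K_2$ are proper subcontinua (again connected by \Cref{Lemma: triodic complemet connected}), each of which must meet $P\cup Q$.

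The hard part will be the combinatorial endgame: tracking, via the irreducibility principle above and the two-component bound of \Cref{Lemma: triodic number of comp.}, which of the nodes $P,Q$ the pieces $\overline{K_1\setminus K_2}$ and $\overline{K_1\setminus K_3}$ actually reach. The aim is to show that these constraints force $K_1$ to absorb too much — concretely, to contain $K\setminus(P\cup Q)$ and ultimately to coincide with $K=K_2\cup K_3$, which is absurd because $K_2$ would then be a proper subcontinuum of the irreducible continuum $K_1=K$ still joining $x$ and $y$. The main obstacle is carrying this case analysis out cleanly when $P$ or $Q$ is a nondegenerate, possibly indecomposable, continuum, so that one cannot reason with single ``branch points''; this is precisely the situation for which \Cref{Lemma: triodic number of comp.} and \Cref{Lemma: triodic complemet connected} were designed, and I expect them to do the essential bookkeeping in place of a naive pointwise argument.
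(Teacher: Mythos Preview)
Your setup matches the paper's: dispose of the case when no $K_i\subset K_j\cup K_k$, then assume $K_1\subset K_2\cup K_3$ and work under atriodicity, observing that $K_2\cap K_3$ splits into two components $P\ni x$, $Q\ni y$. The continua $L=\overline{K_1\setminus K_2}\subset K_3$ and $M=\overline{K_1\setminus K_3}\subset K_2$ and your ``irreducibility principle'' are correct and useful.

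The gap is in your endgame. Your announced contradiction --- that $K_1$ must contain $K\setminus(P\cup Q)$ and then equal $K$ --- is not in general reachable. If there exist $a\in(K_2\setminus K_3)\setminus K_1$ and $b\in(K_3\setminus K_2)\setminus K_1$, your own principle shows that neither $L$ nor $M$ can meet both $P$ and $Q$ (else $b\in L\subset K_1$ or $a\in M\subset K_1$), so $K_1$ certainly does not contain $K\setminus(P\cup Q)$. In this situation your pieces do force a contradiction --- one checks that $K_1$ then splits into two disjoint closed pieces and is disconnected --- but it is not the one you aimed for, and you have not identified it. Even when, say, $K_2\setminus K_3\subset K_1$, the further passage from $K_1\supset K\setminus(P\cup Q)$ to $P\cup Q\subset K_1$ is not automatic and needs an argument you have not supplied. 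The paper proceeds differently: after reducing to $K_1\subset K_2\cup K_3$ it splits precisely on whether such $a$ and $b$ both exist, and in each subcase \emph{constructs} an explicit weak triod --- via boundary bumping when they do (yielding $K_1\cup C_x^a\cup C_x^b$), and via a component analysis of $K_1\cap K_2$ and $K_1\cap K_3$ when one of $K_2\setminus K_3$, $K_3\setminus K_2$ already lies inside $K_1$ --- rather than trying to force $K_1=K$.
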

\begin{proof}
    If for every distinct \(i,j,k\) we have \(K_i \not\subset K_j \cup K_k \), then \(K\) is a weak triod since \( K_1\cap K_2\cap K_3\neq \emptyset\). Thus, \(K\) contains a triod by \Cref{Theorem: weak triod contains triod}. So we may assume without loss of generality that \(K_1 \subset K_2 \cup K_3\).

     Assume for the sake of contradiction that \(K\) does not contain a triod. Let \(X = K_2 \setminus K_3\) and \(Y = K_3 \setminus K_2\). We note that \(X\) and \(Y\) are both open in \(K\). We consider two different cases.
    
    First, suppose that there are some points \(a \in X \setminus K_1 \) and \(b \in Y \setminus K_1\). Then we can find open neighbourhoods \(U_a\) of \(a\) and \(U_b\) of \(b\) in \(K\) such that \(\closure{U_a} \subset X\) with \(\closure{U_a} \cap K_1 = \closure{U_a} \cap K_3 = \emptyset\) and \(\closure{U_b} \subset Y\) with \(\closure{U_b} \cap K_1 = \closure{U_b} \cap K_2 = \emptyset\).
    Consider \(C_x^a\) the component of \(x\) in the set \(\closure{(K_2\setminus \closure{U_a})}\) and \(C_x^b\) the component of \(x\) in the set \(\closure{(K_3\setminus \closure{U_b})}\). Then from the boundary bumping theorem we have that \(C_x^a \cap \partial \closure{U_a} \neq \emptyset\), \(C_x^a \cap \partial \closure{U_b} = \emptyset\), \(C_x^b \cap \partial \closure{U_b} \neq \emptyset\) and \(C_x^b \cap \partial \closure{U_a} = \emptyset\). We note that \(y \notin C_x^a \cup C_x^b\). Since \(K_1 \cap \partial\closure{U_a} = K_1 \cap \partial\closure{U_b} = \emptyset\) and \(x \in K_1 \cap C_x^a \cap C_x^b\) we get that \(K_1 \cup C_x^a \cup C_x^b\) is a weak triod. Thus by \Cref{Theorem: weak triod contains triod} \(K\) contains  triod.

    Second, suppose that \(X \subset K_1\) or \(Y \subset K_1\). Without loss of generality we assume that \(X \subset K_1\). By irreducibility, \(K_2\cap K_3\) must be disconnected, and so it has two components by Lemma \ref{Lemma: triodic number of comp.}, call \(A\) and \(B\). Say $x\in A$ and $y\in B$.  
   
From \Cref{Lemma: triodic complemet connected} we have that both \(X\) and \(Y\) are connected. Since \(A \cup B\) is disconnected and \(K_2 = A \cup B \cup X\), we also get that \(X \cup A\) and \(X \cup B\) are both connected. By \Cref{Lemma: triodic number of comp.} we have that \(K_1 \cap K_2\) has two components, call \(C_1\) and \(C_2\). Without loss of generality we may assume that \(X \subset C_1\) and \(x \in C_1\). Since \(K_1\) is irreducible between \(x\) and \(y\), we have that \(y \not \in C_1\). Since \(X \subset C_1\) and \(X \cup B \) is connected, we have that \(C_1 \cup B \subset K_2\) is connected and so \(C_1 \cap B \neq \emptyset\). We note that \(K_1 \cap Y \neq \emptyset\) because otherwise we would have \(K_2 \subset K_1\), which is not possible. Let \(c \in C_1 \cap B\) and let \(C_c\) be a component of \(c\) in \(K_1 \cap K_3\). Then we have that \(C_c \cap Y \neq \emptyset\). We note that \(B \not\subset K_1\), otherwise \(C_1 \cup B\) would be a proper subcontinuum of \(K_1\) containing \(x\) and \(y\). Then the space \(C_1 \cup B \cup C_c\) is a weak triod. Thus, \(K\) contains a triod by \Cref{Theorem: weak triod contains triod}.
\end{proof}

We need one more fact regarding the hyperspace of continua.

\begin{Theorem}[{\cite[Theorem 14.9]{Nadler_hyperspaces}}]
\label{Theorem: hyperspace of continuum is arc-wise connected}
    Let \(K\) be a continuum. Then the space \(\mathcal{C}(K)\) is arc-wise connected.
\end{Theorem}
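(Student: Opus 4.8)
The plan is to prove the theorem through the classical notion of an \emph{order arc}. Call a subset \(\alpha \subseteq \mathcal{C}(K)\) an order arc from \(A\) to \(B\) if \(\alpha\) is an arc with endpoints \(A\) and \(B\) that is linearly ordered by inclusion, i.e.\ for all \(C,D \in \alpha\) we have \(C \subseteq D\) or \(D \subseteq C\). The heart of the argument is the existence statement: whenever \(A,B \in \mathcal{C}(K)\) satisfy \(A \subsetneq B\), there is an order arc from \(A\) to \(B\). Granting this, the theorem follows quickly. Let \(A,B \in \mathcal{C}(K)\) be arbitrary and distinct. Since \(A \subseteq K\) and \(B \subseteq K\), if \(A \neq K\) we obtain an order arc \(\alpha\) from \(A\) to \(K\), and likewise an order arc \(\beta\) from \(B\) to \(K\) if \(B \neq K\) (if one of them equals \(K\) the corresponding arc degenerates and the argument only simplifies). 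Concatenating \(\alpha\) with the reverse of \(\beta\) at their common endpoint \(K\) yields a continuous path in \(\mathcal{C}(K)\) joining \(A\) to \(B\). I would then invoke the standard fact that in a Hausdorff space any two distinct points joined by a path are also joined by an arc; applying it to \(A\) and \(B\) produces the desired arc.

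It remains to produce order arcs, which is the substantive part. I would fix \(A \subsetneq B\) and work inside the continuum \(B\), so without loss of generality \(B = K\). Among the natural routes (via a Whitney map or via Zorn's lemma) I would take the latter. Consider the family \(\mathcal{N}\) of all nested (totally inclusion-ordered) subcollections \(\Gamma \subseteq \mathcal{C}(K)\) that contain \(A\) and \(K\) and are closed in the Hausdorff metric, partially ordered by inclusion. For a chain in \(\mathcal{N}\), the Hausdorff-closure of the union of its members is an upper bound in \(\mathcal{N}\): one uses that the inclusion relation is closed in \(\mathcal{C}(K)\times\mathcal{C}(K)\), so the closure of a nested family is again nested. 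Zorn's lemma then furnishes a maximal element \(\Gamma_0\), and the goal is to show that \(\Gamma_0\) is an order arc from \(A\) to \(K\).

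For this I would use two structural facts about nested families of subcontinua. First, if \(\{C_t\}\) is a nested family of subcontinua of \(K\), then both \(\bigcup_t C_t\) and \(\bigcap_t C_t\) are again subcontinua. This shows that \(\Gamma_0\) is a compact, connected subset of \(\mathcal{C}(K)\) linearly ordered by inclusion; a compact connected linearly ordered space is an arc, provided it has no \enquote{jumps}. Second, and this is where I expect the main difficulty, one must rule out jumps: if there were \(C,C' \in \Gamma_0\) with \(C \subsetneq C'\) and no member of \(\Gamma_0\) strictly between them, maximality would be contradicted by inserting an intermediate continuum \(C \subsetneq D \subsetneq C'\). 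Producing such a \(D\) is the delicate step and is exactly where continuum theory enters: working in the continuum \(C'\), one takes a point \(p \in C' \setminus C\), a small open neighbourhood \(U\) of \(p\), and uses the boundary bumping theorem inside \(C'\) to grow a subcontinuum containing \(C\) that meets \(\partial U\) yet is a proper subcontinuum of \(C'\), giving the required intermediate \(D\).

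The main obstacle is thus this no-jump (insertion) step: verifying via the boundary bumping theorem that between any two comparable members of a maximal nested family one can always squeeze a further subcontinuum, so that \(\Gamma_0\) is order-dense and hence, being compact and connected, genuinely homeomorphic to \([0,1]\). Once this is secured, the concatenation argument of the first paragraph completes the proof that \(\mathcal{C}(K)\) is arc-wise connected.
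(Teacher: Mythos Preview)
The paper does not prove this statement; it is quoted as a preliminary result with a citation to Nadler's \emph{Hyperspaces}, and your order-arc approach via Zorn's lemma and the boundary bumping theorem is exactly the classical argument found there. So in spirit your proposal matches the source the paper relies on.

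One small slip worth correcting: it is not true that the union of an arbitrary nested family of subcontinua of \(K\) is again a subcontinuum---it need not be closed (think of \(\bigcup_n[0,1-\tfrac{1}{n}]\) inside \([0,1]\)). What you actually need, and what the standard proof uses, is that the \emph{closure} of such a union is a subcontinuum, and that for a \emph{closed} nested family \(\Gamma_0\subseteq\mathcal{C}(K)\) this closure already lies in \(\Gamma_0\) by maximality. With that adjustment your outline is sound: closedness plus order-completeness plus the no-jump step (boundary bumping) force \(\Gamma_0\), with the inclusion order, to be a separable order-complete dense linear order with endpoints, hence order-isomorphic to \([0,1]\); one then checks the order topology agrees with the Hausdorff-metric subspace topology. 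The concatenation through \(K\) and the path-to-arc step in the Hausdorff space \(\mathcal{C}(K)\) finish the argument as you describe.
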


\subsection{Preliminary Results}
In this subsection, we present earlier results on the hyperspaces of Polish spaces and the properties of Borel structure on these hyperspaces that will be used throughout the paper. We refer the reader to \cite{Kechris} for further details.

Let \(X\) be a Polish space. We denote the hyperspace of non-empty compact subsets of $X$ and the hyperspace of continua in $X$ by  \(\mathcal{K}(X)\) and  \(\mathcal{C}(X)\); respectively. We endow $\mathcal{K}(X)$ with the Vietoris topology, which can be induced by the Hausdorff metric on it. Recall that this makes $\mathcal{K}(X)$ Polish and that the space \(\mathcal{C}(X)\) is a closed subset of \(\mathcal{K}(X)\). We also note that the Effros-Borel structure on the hyperspace \(\mathcal{F}(X)\) of all non-empty closed subsets of \(X\) is standard Borel. Moreover, the set \(\mathcal{K}(X)\), and therefore \(\mathcal{C}(X) \), is a Borel subset of \(\mathcal{F}(X)\).

One of the most frequently used results throughout the paper is the Lusin-Novikov uniformization theorem.

\begin{Theorem}[{\cite[Theorem 18.10]{Kechris}} Lusin-Novikov]
\label{Theorem: Lusin-Novikov}
    Let \(X,Y\) be standard Borel spaces and let \(P \subset X \times Y\) be Borel. If every section \(P_x\), \(x \in X\), of \(P\) is countable, then \(P\) has a Borel uniformization and therefore the projection \(\pi_1( P)\) is Borel.
\end{Theorem}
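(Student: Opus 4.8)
This is the Lusin--Novikov theorem, and the plan is to reconstruct its standard proof, whose entire content is the following decomposition: if \(P \subseteq X \times Y\) is Borel with every section \(P_x\) countable, then \(P = \bigcup_{n \in \N} \graph(f_n)\) for partial Borel functions \(f_n \colon A_n \to Y\) with \(A_n \subseteq X\) Borel. I would first observe that both conclusions follow at once from this. Indeed \(\pi_1(P) = \bigcup_n A_n\) is then a countable union of Borel sets, hence Borel; and writing \(B_n = A_n \setminus \bigcup_{m<n} A_m\), the map \(f\) sending each \(x \in \pi_1(P)\) to \(f_n(x)\) for the unique \(n\) with \(x \in B_n\) has graph \(\bigcup_n \{(x,f_n(x)) : x \in B_n\}\), a Borel set, so \(f\) is a Borel uniformization. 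Everything therefore reduces to the decomposition.

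To establish the decomposition I would first normalise the setting. Transporting along a Borel isomorphism lets me assume \(Y = \N^\N\), and a standard change-of-topology argument refines the Polish topology on \(X \times \N^\N\) to a finer zero-dimensional Polish one, with the same Borel sets, in which the Borel set \(P\) becomes closed. Representing a closed subset of such a product as the body of a tree, I obtain a tree \(T\) with \(P = [T]\) whose \(x\)-sections \(T_x\) are trees on \(\N\) with \(P_x = [T_x]\), and everything depends Borel-measurably on \(x\). Under this reading the hypothesis that each \(P_x\) is countable says exactly that no \(T_x\) has a perfect subtree, i.e. the Cantor--Bendixson kernel of each \(P_x\) is empty.

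The engine is then a uniform Cantor--Bendixson analysis of the sections. For each \(x\) the iterated derivative \(P_x \mapsto P_x'\) (deletion of isolated points) exhausts \(P_x\) after \(\rho(x) < \omega_1\) steps, where \(\rho(x)\) is the Cantor--Bendixson rank. The key quantitative input is boundedness: countability of sections is a \(\Pi^1_1\) condition and \(x \mapsto \rho(x)\) is a \(\Pi^1_1\)-rank, so applying \(\Pi^1_1\)-boundedness over the Borel (hence \(\Sigma^1_1\)) set \(X\) yields a single countable ordinal \(\alpha_0\) with \(\rho(x) \leq \alpha_0\) for all \(x\). I would then run a transfinite recursion of length \(\alpha_0\): at stage \(\beta\) I peel off the points that are isolated in the \(\beta\)-th derived section. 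Such isolated points are selected Borel-measurably by indexing over basic clopen sets \(N_s\) (\(s \in \N^{<\N}\)) and letting \(x\) map to the unique point of the derived section lying in \(N_s\) whenever that set is a singleton, giving countably many Borel graphs per stage. Since \(\alpha_0 \cdot \aleph_0 = \aleph_0\) and the kernels are empty, these graphs are countably many and cover all of \(P\), which is the decomposition.

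The main obstacle is concentrated in this last paragraph: obtaining a single countable ordinal that bounds all the section ranks is a genuine reflection/boundedness phenomenon, not a pointwise estimate, and one must check that the derivative operation, the stage at which a point becomes isolated, and the clopen-indexed selections are all Borel in the parameter \(x\). By contrast the reductions of the first two paragraphs are routine once the change-of-topology lemma making \(P\) closed is in hand.
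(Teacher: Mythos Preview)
The paper does not prove this theorem; it is stated with a citation to Kechris and used as a standard tool throughout. There is therefore no proof in the paper to compare your proposal against.

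Your sketch follows one of the standard routes, and the reduction to the graph decomposition in your first paragraph is clean. One step in the second paragraph is imprecise, however: refining the Polish topology on \(X \times \N^\N\) to make \(P\) closed will in general destroy the product structure, so you cannot then read off a tree \(T\) whose \(x\)-sections are trees on \(\N\) with \([T_x] = P_x\). A standard repair is to instead take a continuous bijection \(g\colon F \to P\) from a closed \(F \subseteq \N^\N\) (available for any Borel \(P\)) and run the Cantor--Bendixson analysis on the closed countable fibres of the continuous map \(\pi_X \circ g \colon F \to X\); with that adjustment the \(\Pi^1_1\)-boundedness step and the clopen-indexed peeling you describe go through.
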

Another useful result is the existence of a Borel selector function on closed subsets.
\begin{Theorem}[{\cite[Theorem 12.13]{Kechris}}]
\label{Theorem: selection for closed sets}
Let \(X\) be a Polish space. Then there exists a Borel function \(S: \mathcal{F}(X) \to X\) such that \(S(H) \in H\) for every \(H \in \mathcal{F}(X)\).
\end{Theorem}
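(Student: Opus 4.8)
The plan is to produce the selector as a pointwise limit of a sequence of Borel functions valued in a fixed countable dense set, following the classical construction. First I would fix a complete compatible metric $d \le 1$ on $X$ together with a countable dense sequence $(d_n)_{n\in\N}$. The starting observation is that the Effros-Borel structure on $\mathcal{F}(X)$ is by definition generated by the sets $\{H : H \cap U \neq \emptyset\}$ for $U$ open; in particular, for every basic open ball $B(d_n, r)$ with rational radius the event $\{H : H \cap B(d_n, r) \neq \emptyset\}$ is Borel in the variable $H$. This is the only genuinely descriptive-set-theoretic input needed, and everything else is measurable bookkeeping built on top of it.

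Next I would define Borel maps $S_k : \mathcal{F}(X) \to X$ by recursion on $k$, each taking values in the set $(d_n)$, maintaining the invariant $\dist(S_k(H), H) < 2^{-k}$ for all $H$. At stage $k+1$, set $S_{k+1}(H) = d_n$, where $n$ is the least index such that simultaneously $d(d_n, S_k(H)) < 2^{-k+1}$ and $B(d_n, 2^{-(k+1)}) \cap H \neq \emptyset$. Such an $n$ always exists: since $\dist(S_k(H), H) < 2^{-k}$ there is a genuine point $y \in H$ with $d(S_k(H), y) < 2^{-k}$, and density supplies some $d_n$ with $d(d_n, y) < 2^{-(k+1)}$; then $y \in B(d_n, 2^{-(k+1)}) \cap H$ and $d(d_n, S_k(H)) < 2^{-(k+1)} + 2^{-k} < 2^{-k+1}$, so both conditions hold. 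By construction $\dist(S_{k+1}(H), H) \le d(d_n, y) < 2^{-(k+1)}$, which preserves the invariant, and $d(S_k(H), S_{k+1}(H)) < 2^{-k+1}$.

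The Borel measurability of each $S_{k+1}$ follows by induction. Assuming $S_k$ is Borel, the level set $\{H : S_{k+1}(H) = d_n\}$ is a Boolean combination of the Borel events $\{H : d(d_n, S_k(H)) < 2^{-k+1}\}$ and $\{H : B(d_n, 2^{-(k+1)}) \cap H \neq \emptyset\}$, where the ``least index'' requirement only excises the countably many analogous events for smaller indices; since these level sets partition $\mathcal{F}(X)$ as $n$ ranges over $\N$, the map $S_{k+1}$ is Borel. Because $d(S_k(H), S_{k+1}(H)) < 2^{-k+1}$, the sequence $(S_k(H))_k$ is uniformly Cauchy, so by completeness $S(H) := \lim_k S_k(H)$ exists and is Borel as a pointwise limit of Borel maps. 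Finally $\dist(S(H), H) = \lim_k \dist(S_k(H), H) = 0$, and as $H$ is closed this forces $S(H) \in H$, completing the argument.

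The main obstacle I anticipate is purely in the bookkeeping: the two thresholds, the distance to $H$ and the distance to the previous approximation, must be balanced so that the recursion is simultaneously always executable, produces a Cauchy sequence, and keeps the limit inside $H$. None of the individual steps is deep, but as the triangle-inequality estimate above shows, the constants have to be chosen with a little care for the existence-of-$n$ argument and the Cauchy bound to coexist; the only descriptive point, that the ball-meeting events are Borel, is immediate from the definition of the Effros-Borel structure.
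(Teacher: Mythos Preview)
The paper does not include its own proof of this statement; it is quoted verbatim from Kechris as a background result, so there is nothing to compare against on the paper's side. Your argument is correct and is precisely the classical Kuratowski--Ryll-Nardzewski style construction that Kechris carries out: approximate by a Cauchy sequence of Borel maps into a fixed countable dense set, using only that the events $\{H: H\cap U\neq\emptyset\}$ generate the Effros structure. One cosmetic omission is that you never spell out the base case $S_0$; the obvious choice $S_0(H)=d_n$ for the least $n$ with $B(d_n,1)\cap H\neq\emptyset$ (or indeed any constant, once $d\le 1$ is relaxed to $d<1$ via a suitable bound) launches the recursion, and your invariant and Cauchy estimate then go through exactly as written.
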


While working with the arc-connection relation, it is naturally very useful to consider the set of all arcs in the hyperspace \(\mathcal{K}(X)\). We denote the set of all arcs and the set of all simple closed curves in $X$ by \(\mathcal{J}(X)\) and \(\mathcal{S}(X)\); respectively. It turns out that both subsets are Borel. Not only this, but we also have the following properties.
\begin{Proposition}[{\cite{Becker_note_on_path_components}}]
\label{Proposition: arcs and circles are Borel}
    Let \(X\) be a Polish space. Then \(\mathcal{J}(X)\) and \(\mathcal{S}(X)\) are Borel subsets of \(\mathcal{K}(X)\), and there exists a Borel mapping \(e: \mathcal{J}(X) \to \mathcal{K}(X)\) that assigns to every arc the set of its endpoints.
\end{Proposition}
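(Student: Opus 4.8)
The plan is to replace the ``existence of a homeomorphism'' definitions of $\mathcal{J}(X)$ and $\mathcal{S}(X)$, which are a priori only analytic, by internal characterizations that can be verified through finite approximations drawn from a fixed countable basis. Fix a countable basis $\mathcal{B}$ of $X$ consisting of open balls of rational radius. I would start from the classical facts of continuum theory (see \cite{Nadler_continuum_theory}) that a nondegenerate continuum is an arc if and only if it is locally connected and arc-like, and that every locally connected circle-like continuum is an arc or a simple closed curve. This suggests defining
\[
\mathcal{J}(X)=\{K\in\mathcal{C}(X): \diam K>0,\ K \text{ locally connected},\ K \text{ arc-like}\}
\]
and
\[
\mathcal{S}(X)=\{K\in\mathcal{C}(X): \diam K>0,\ K \text{ locally connected},\ K \text{ circle-like}\}\setminus\mathcal{J}(X),
\]
after which the whole problem reduces to showing that the three predicates ``locally connected'', ``arc-like'' and ``circle-like'' are Borel on $\mathcal{C}(X)$.

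For arc-likeness and circle-likeness I would use that $K$ is arc-like (resp.\ circle-like) exactly when for every $\varepsilon>0$ it admits an open cover forming an $\varepsilon$-chain (resp.\ a circular $\varepsilon$-chain), and that such a cover may always be taken from $\mathcal{B}$. Indeed, for a finite tuple $(\ell_{1},\dots,\ell_{n})$ of members of $\mathcal{B}$ the conditions ``$\diam\ell_{i}<\varepsilon$ for all $i$'', ``$\ell_{i}\cap\ell_{j}=\emptyset$ whenever $|i-j|\ge 2$'' and ``$\ell_{i}\cap\ell_{j}\ne\emptyset$ whenever $|i-j|\le 1$'' do not involve $K$ at all, while ``$K\subseteq\bigcup_{i}\ell_{i}$'' and ``$K\cap\ell_{i}\ne\emptyset$ for every $i$'' are open conditions on $K\in\mathcal{K}(X)$. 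Hence ``$K$ has a chain $\varepsilon$-cover from $\mathcal{B}$'' is a countable union of open conditions, and arc-likeness, being the intersection over $\varepsilon=1/m$ of these, is Borel; circle-likeness is handled identically with the cyclic incidence pattern. Local connectedness I would treat through Whyburn's Property~S: $K$ is locally connected iff for every $\varepsilon>0$ it is a union of finitely many connected sets of diameter $<\varepsilon$. Encoding a ``connected piece of diameter $<\varepsilon$'' by the requirement that any two of its points be joinable, for arbitrarily small $\gamma$, by a $\gamma$-chain of balls from $\mathcal{B}$ of diameter $<\varepsilon$ meeting $K$, one expresses Property~S with quantifiers ranging only over $\N$, over the rationals and over finite tuples from $\mathcal{B}$; this exhibits the set of locally connected continua as Borel. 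Combining the three Borel predicates yields that $\mathcal{J}(X)$ and $\mathcal{S}(X)$ are Borel.

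For the endpoint map I would avoid cut-point arguments (whose Borel expression is delicate, since the two sides of a cut point accumulate on it and may interlace in a wild embedding) and again work with chains. The characterization I intend to use is: for $K\in\mathcal{J}(X)$ a point $p\in K$ is an endpoint if and only if for every $\varepsilon>0$ there is an $\varepsilon$-chain from $\mathcal{B}$ covering $K$ in which $p$ lies in the first link and in no other link. An endpoint can always be placed at the end of a fine chain, whereas an interior point must, for small $\varepsilon$, be covered by two consecutive interior links and so cannot be confined to an end link. As before this makes the relation
\[
R=\{(K,p)\in\mathcal{J}(X)\times X: p\in K \text{ and } p \text{ is an endpoint of } K\}
\]
Borel. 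Every section $R_{K}$ has exactly two elements, so by the Lusin--Novikov theorem (\Cref{Theorem: Lusin-Novikov}) $R$ admits Borel uniformizations yielding Borel maps $s_{1},s_{2}\colon\mathcal{J}(X)\to X$ with $R_{K}=\{s_{1}(K),s_{2}(K)\}$; since $(a,b)\mapsto\{a,b\}$ is continuous from $X\times X$ into $\mathcal{K}(X)$, the map $e(K)=\{s_{1}(K),s_{2}(K)\}$ is the desired Borel assignment of endpoints.

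The main obstacle, in my view, is not the assembly but the two reductions ``to a countable family'': making precise that the chains and connected pieces witnessing arc-likeness, local connectedness and the endpoint property may be drawn from $\mathcal{B}$, so that every quantifier becomes countable. The delicate step is the passage from genuine subcontinua to finite ball-chains, where one must ensure that replacing a set by a nearby union of basis balls preserves both the covering of $K$ and the incidence pattern of the chain. Here a compactness argument, together with the fact that in a compact metric space components coincide with quasicomponents and are therefore detected by arbitrarily fine chains, is what makes the equivalences hold. The only non-elementary inputs beyond this are the continuum-theoretic characterizations of arcs and simple closed curves among locally connected arc-like and circle-like continua, which I would cite from \cite{Nadler_continuum_theory}.
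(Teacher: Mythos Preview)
The paper does not give its own proof of this proposition; it is quoted as a preliminary result from \cite{Becker_note_on_path_components}, so there is nothing in the paper to compare your argument against.

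That said, your approach is sound and essentially the standard one: replacing the existential ``there is a homeomorphism'' by internal chain characterizations over a fixed countable basis, and recovering the endpoint map via the ``lies only in an end link of arbitrarily fine chains'' criterion followed by Lusin--Novikov. The characterizations you invoke (a nondegenerate locally connected arc-like continuum is an arc; a nondegenerate locally connected circle-like continuum is a simple closed curve) are classical. The one place where your sketch is thin is the Borel encoding of local connectedness: Property~S speaks of arbitrary connected pieces, and your replacement of a connected piece by ``any two of its points are joined by a $\gamma$-chain of basic balls for every $\gamma$'' needs a little care, since that condition detects quasicomponents rather than the specific pieces witnessing Property~S. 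A cleaner route here is to use the equivalent form ``for every $\varepsilon>0$, $K$ is a finite union of subcontinua of diameter $<\varepsilon$''; the witnesses then lie in the compact space $\mathcal{C}(K)$, so the projection is Borel by Arsenin--Kunugui (or by the compactness of sections directly). With that adjustment the outline goes through.
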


This can be further improved by splitting the mapping \(e\) into two mappings.
\begin{Lemma}[\cite{Debs_Raymond_on_the_arc-wise_connection_in_plane}]
\label{Lemma: Borel assignemtns of endpoints}
    Let \(X\) be a Polish space. Then there are two Borel mappings \(e_1,e_2:\mathcal{J}(X) \to X\) such that for every \(I \in \mathcal{J}(X)\) we have \(e(I) = \{e_1(I),e_2(I)\}\), where \(e\) is the mapping in \Cref{Proposition: arcs and circles are Borel}.
\end{Lemma}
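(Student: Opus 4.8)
The plan is to obtain the first endpoint map via a Borel selector and then recover the second endpoint by uniformizing the relation ``$x$ is an endpoint of $I$ different from the first one''. First I would apply the Borel selector $S\colon \mathcal{F}(X)\to X$ from \Cref{Theorem: selection for closed sets} to the endpoint-set map $e\colon \mathcal{J}(X)\to\mathcal{K}(X)$ of \Cref{Proposition: arcs and circles are Borel}, setting $e_1 := S\circ e$. Since each $e(I)$ is a nonempty compact (hence closed) subset of $X$ and both $S$ and $e$ are Borel, the map $e_1$ is Borel and satisfies $e_1(I)\in e(I)$ for every $I\in\mathcal{J}(X)$.

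For the second endpoint I would consider the set
\[
  R=\{(I,x)\in\mathcal{J}(X)\times X : x\in e(I)\text{ and } x\neq e_1(I)\}.
\]
This set is Borel: the membership relation $\{(I,x):x\in e(I)\}$ is Borel, being the preimage of the (closed) Vietoris membership relation $\{(K,x)\in\mathcal{K}(X)\times X : x\in K\}$ under the Borel map $(I,x)\mapsto(e(I),x)$, while the graph $\{(I,x):x=e_1(I)\}$ is Borel because $e_1$ is Borel and $X$ is metrizable. Each section $R_I=e(I)\setminus\{e_1(I)\}$ consists of the single remaining endpoint, since an arc has exactly two distinct endpoints; in particular every section is a nonempty singleton. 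The Lusin--Novikov theorem (\Cref{Theorem: Lusin-Novikov}) then provides a Borel uniformization of $R$ which, being forced to return the unique point of each section, is a total Borel map $e_2\colon\mathcal{J}(X)\to X$ with $e_2(I)$ equal to the endpoint of $I$ distinct from $e_1(I)$. Consequently $e(I)=\{e_1(I),e_2(I)\}$ for every $I$, as required.

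The main (and rather mild) obstacle is purely in the bookkeeping of the Borel structures: checking that the Vietoris membership relation is closed in $\mathcal{K}(X)\times X$ and that the graph of the Borel map $e_1$ is Borel in $\mathcal{J}(X)\times X$. Once these are in place, the statement follows from a direct combination of the selection theorem and the Lusin--Novikov uniformization theorem already recorded above, so I do not expect any genuine difficulty beyond assembling these standard facts. I note that one could instead try to split $e$ by applying Lusin--Novikov directly to $\{(I,x):x\in e(I)\}$, but that yields only countably many partial uniformizing functions that would still have to be recombined into exactly two total maps, whereas the selector-then-uniformize route produces $e_1$ and $e_2$ cleanly.
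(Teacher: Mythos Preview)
The paper does not give its own proof of this lemma; it is stated with a citation to Debs and Saint Raymond and left unproved. Your argument is correct and entirely self-contained using only results already recorded in the paper: the composition $e_1=S\circ e$ is Borel since $e$ lands in $\mathcal{K}(X)\subset\mathcal{F}(X)$ (which the paper notes is a Borel inclusion), and the set $R$ is Borel for the reasons you give.

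One small simplification: once you have established that each section $R_I$ is a \emph{singleton}, you do not actually need Lusin--Novikov. The set $R$ is then the graph of a function $\mathcal{J}(X)\to X$, and a function between standard Borel spaces with Borel graph is automatically Borel. Invoking \Cref{Theorem: Lusin-Novikov} is of course still valid, just heavier than necessary.
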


As mentioned earlier, an important result which sparked this investigation is that the arc-connection relation on a \(G_\delta\) subset of the plane is Borel, where we recall that $G_\delta$ subsets of a Polish space are exactly Polish subspaces of it.
\begin{Theorem}[{\cite[Theorem 1]{Debs_Raymond_on_the_arc-wise_connection_in_plane}}]
\label{Theorem: arc-connection relation is borel for G_delta in the plane}
    Let \(X\subset \mathbb{R}^2\) be a \(G_\delta\) subset. Then \(E_X^A\) is Borel.
\end{Theorem}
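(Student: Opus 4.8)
The plan is to prove that $E_X^A$ is simultaneously analytic and has an analytic complement, and then to conclude that it is Borel by Suslin's theorem; the planar hypothesis, embodied in Moore's \Cref{Theorem: pairwise disjoint family of triods in the plane is countable}, will enter only to secure the second half. The analytic direction needs nothing special. By \Cref{Proposition: arcs and circles are Borel} the set $\mathcal{J}(X)$ of arcs is Borel in $\mathcal{K}(X)$, the relation $\{(x,K):x\in K\}$ is closed in $X\times\mathcal{K}(X)$, and hence
\[
R=\{(x,y,I)\in X\times X\times\mathcal{J}(X): x\in I \text{ and } y\in I\}
\]
is Borel. Since $E_X^A$ is the projection of $R$ to $X\times X$, it is analytic. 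This argument uses only that $X$ is Polish, so the entire weight of the theorem lies in upgrading analytic to Borel, which is where planarity must be exploited.

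For the complement I would set up a Borel search for an arc. Fix a compatible complete metric on $X$, available since a $G_\delta$ subset of the plane is Polish. For each pair $(x,y)$ build a tree $T_{x,y}$ whose nodes are finite chains of small connected pieces of $X$ — say finite sequences of nonempty relatively open connected sets of controlled, geometrically decreasing diameter, each meeting the next, starting near $x$ and progressing toward $y$, together with the combinatorial bookkeeping needed to guarantee that such a chain can be realised inside a genuine arc. The assignment $(x,y)\mapsto T_{x,y}$ is arranged to be Borel. Completeness is used to show that an infinite branch of $T_{x,y}$ produces a Cauchy sequence of approximations whose limit is an honest arc of $X$ joining $x$ and $y$, while conversely an arc joining $x$ and $y$ can be approximated so as to yield a branch. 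Thus $(x,y)\in E_X^A$ if and only if $T_{x,y}$ is ill-founded, which re-proves analyticity and exhibits the complement as the co-analytic set of pairs with $T_{x,y}$ well-founded.

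The decisive step, and the one I expect to be the main obstacle, is to show that the well-founded trees $T_{x,y}$ arising from \emph{non}-arc-connected pairs have ranks bounded by a single countable ordinal $\alpha_0$. Once this uniform bound is in hand, well-foundedness of $T_{x,y}$ is equivalent to $\operatorname{rank}(T_{x,y})<\alpha_0$, a Borel condition for fixed countable $\alpha_0$; hence the complement of $E_X^A$ is Borel, and therefore so is $E_X^A$. This bound is exactly where the plane differs from higher dimensions. Branching in $T_{x,y}$ records genuinely distinct ways a partial arc can be continued, and whenever two continuations split apart and later fail to reconcile they manufacture a triod in $X$, with distinct branching locations producing essentially disjoint triods. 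By Moore's \Cref{Theorem: pairwise disjoint family of triods in the plane is countable} only countably many pairwise disjoint triods can occur, so the effective branching of the search is countable and its well-founded depth cannot be made arbitrarily large. Making this heuristic precise — quantifying how triods obstruct the search and extracting a uniform ordinal bound — is the technical core of the argument, and it is also why the statement is special to $\R^2$: in $\R^3$ no such bound exists, and indeed $E_X^A$ can be a universal analytic equivalence relation, as recorded elsewhere in the paper.
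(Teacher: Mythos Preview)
The paper does not prove this theorem; it is quoted from Debs--Saint Raymond as a preliminary result. The paper does, however, indicate their method just before \Cref{Theorem: continuum-connection is Borel iff all classes are Borel}: one invokes the Becker--Pol criterion that (on suitable Polish spaces) $E_X^A$ is Borel if and only if every arc component is Borel, and then verifies the latter. Non-triodic nontrivial arc components are continuous images of intervals and hence $K_\sigma$; by Moore's \Cref{Theorem: pairwise disjoint family of triods in the plane is countable} the triodic ones are only countably many, and a separate argument handles each of those. No search trees or rank analysis enter.

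Your route is genuinely different, and the gap is exactly the step you flag as ``the technical core.'' Moore's theorem restricts how many disjoint triods $X$ can contain, but restricting \emph{branching} of a tree does not restrict its \emph{rank}: a finitely branching well-founded tree can already have arbitrarily large countable rank, so ``the effective branching of the search is countable'' gives no ordinal bound at all, let alone a single $\alpha_0$ uniform over all pairs $(x,y)\notin E_X^A$. Equally problematic is the link between branching of $T_{x,y}$ and triods in $X$: in any reasonable encoding most of the branching records inessential choices (which basic open set, which subdivision), not geometric splitting; and even genuine geometric splitting can come from simple closed curves, or from two partial arcs that later merge, neither of which produces a triod. As it stands the proposal is an outline with the decisive estimate missing, and there is no evident way to extract a uniform rank bound from Moore's theorem along the lines you suggest; the published proof sidesteps rank analysis entirely by reducing to Borelness of individual classes.
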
 

\section{Arc-connection Relation}
In this section, we investigate the arc-connection relation. Let \(X\) be a metric space and let \(C\) be an arc component of \(X\). Then there are three possible forms for $C$, see \cite{Debs_Raymond_on_the_arc-wise_connection_in_plane}. The first possibility is that \(C\) contains only one point. The second possibility is that \(C\) is a continuous image of an interval (closed, open or half-open), in which case it is a \(K_\sigma\) and thus a Borel subset. The last possibility is that \(C\) contains a simple triod, in which case $C$ is called  a \textit{triodic arc component}. We will show that in some sense the complexity of arc-connection relation is determined by the complexity of its non-triodic non-trivial arc components in planar spaces, whereas it is determined by the complexity of its triodic arc components in a general Polish space. Debs and Saint Raymond showed that Borelness of the other arc components is ensured if the triodic arc components are Borel.
\begin{Theorem}[{\cite[Proposition 10]{Debs_Raymond_on_the_arc-wise_connection_in_plane}}]
\label{Theorem: borelness of isolated points}
    Let \(X\) be a Polish space. Suppose that the union of all points with triodic arc components is Borel. Then the set of all points with trivial arc components is Borel as well.
\end{Theorem}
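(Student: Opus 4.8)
The plan is to peel off the triodic part and reduce everything to the triod-free Borel space \(X' := X \setminus T\), where \(T\) denotes the union of all triodic arc components (Borel, by hypothesis). Since \(T\) is a union of whole arc components, \(X'\) is arc-component saturated and Borel, and its arc components are exactly the non-triodic arc components of \(X\). Writing \(M\) for the set of points whose arc component is non-trivial (hence a continuous image of an interval, as no component of \(X'\) contains a triod), the partition \(X = T \sqcup M \sqcup S\) reduces the theorem to showing that \(M\) is Borel; the set \(S\) of points with trivial arc component is then \(S = X \setminus (T \cup M)\).

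The first step is to show that the arc-connection relation \(E := E^A_{X'}\) is itself Borel. Here I would consider the set
\[
\tilde B = \{(x,y,I) \in X' \times X' \times \mathcal{K}(X') : I \in \mathcal{J}(X'),\ x \neq y,\ e(I) = \{x,y\}\},
\]
which is Borel by \Cref{Proposition: arcs and circles are Borel} and \Cref{Lemma: Borel assignemtns of endpoints}. The claim is that for each fixed pair \((x,y)\) the section of \(\tilde B\) has at most two elements: if there were three pairwise distinct arcs \(I_1,I_2,I_3\) with \(e(I_j)=\{x,y\}\), then, since an arc is irreducible between its endpoints, \Cref{Lemma: condition for triod containing} applied to \(K = I_1\cup I_2\cup I_3\) would produce a triod inside \(X'\), contradicting triod-freeness. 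Hence every section is finite, so \Cref{Theorem: Lusin-Novikov} shows that the projection \(\pi_{12}(\tilde B)\) is Borel. Finally, two distinct points \(x,y\) lie in a common arc component of \(X'\) precisely when some containing arc has a sub-arc with endpoints exactly \(x,y\); thus \(\pi_{12}(\tilde B) = E \setminus \Delta\), and so \(E\) is Borel.

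The second step passes from the Borel relation \(E\) to the Borel set \(M\). The set \(E \setminus \Delta\) is Borel, and its vertical section over a point \(x\) is \([x]_E \setminus \{x\}\): this is empty when the component is trivial, and is a continuous image of an interval with one point deleted, hence \(\sigma\)-compact, when the component is non-trivial. Thus \(M = \pi_1(E \setminus \Delta)\) is the projection of a Borel set all of whose sections are \(\sigma\)-compact, and the Arsenin--Kunugui theorem on Borel sets with \(\sigma\)-compact sections (see \cite{Kechris}) yields that \(M\) is Borel. Consequently \(S = X \setminus (T \cup M)\) is Borel, as required.

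The main obstacle is precisely the arc-type (non-circular) components: through a single point there pass uncountably many arcs, so \Cref{Theorem: Lusin-Novikov} cannot be applied to ``arcs through \(x\)'' directly. The device that resolves this is to parametrize not by arcs through a point but by arcs whose two endpoints are prescribed, for which triod-freeness (via \Cref{Lemma: condition for triod containing}) forces a \emph{finite} section; this is what makes \(E\) Borel. The remaining passage from \(E\) to \(M\) then rests on the \(\sigma\)-compactness of non-triodic components, i.e.\ on the Arsenin--Kunugui projection theorem, which relaxes the countable-section hypothesis of Lusin--Novikov to a \(\sigma\)-compact-section hypothesis. I would take particular care in verifying the Borelness of \(\tilde B\) and in pinning down the exact form of the \(\sigma\)-compact-section theorem to be invoked, as these are the only non-routine ingredients.
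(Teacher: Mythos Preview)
The paper does not supply its own proof of this statement; it is quoted as \cite[Proposition 10]{Debs_Raymond_on_the_arc-wise_connection_in_plane} and used as a black box. So there is nothing to compare against, and the question is only whether your argument stands on its own.

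It does, with one technical caveat. Your two-step strategy is sound: (i) on the triod-free part \(X'\), bound the number of arcs with prescribed endpoints \(\{x,y\}\) by two using \Cref{Lemma: condition for triod containing} (arcs are irreducible between their endpoints, so three of them would build a triod), then apply Lusin--Novikov to get that \(E^A_{X'}\) is Borel; (ii) project \(E\setminus\Delta\) via Arsenin--Kunugui, using that non-triodic non-trivial arc components are \(K_\sigma\), hence so are the sections \([x]_E\setminus\{x\}\). Both steps are correct.

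The caveat is that \(X'=X\setminus T\) is only Borel, not \(G_\delta\), so it need not be Polish; you should therefore not write \(\mathcal{K}(X')\) or \(\mathcal{J}(X')\) as if \Cref{Proposition: arcs and circles are Borel} applied to \(X'\) directly. The fix is painless: work in \(X\times X\times\mathcal{J}(X)\) and observe that, since \(X'\) is \(E^A_X\)-saturated, an arc \(I\in\mathcal{J}(X)\) lies in \(X'\) iff \(e_1(I)\in X'\), a Borel condition. Likewise, run Arsenin--Kunugui inside the Polish product \(X\times X\), where the sections over points of \(T\) are empty and the remaining sections are \(K_\sigma\) in \(X\). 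With this adjustment the argument is complete.
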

The main result of this section is that on a \(G_\delta\) subset of the plane the arc-connection relation is Borel reducible to \(E_0\). We start with a more general theorem. We can then use the results of \cite{Debs_Raymond_on_the_arc-wise_connection_in_plane} to apply the theorem to the planar case. We also demonstrate an application of this theorem in higher dimensions.

\begin{Theorem}
\label{Theorem: general_theorem}
    Let \(X\) be a Polish space and let 
    \[X_T = \{x \in X: \text{ \(x\) has a triodic arc component}\}.\]
    Suppose that \(X_T\) and \(E^A_X\) are Borel. Then \(E^A_X \leq_B E_0 \times E^A_X|_{X_T}\).
\end{Theorem}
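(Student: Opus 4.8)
I would prove this by splitting $X$ into the two $E^A_X$-invariant Borel pieces $X_T$ and $X\setminus X_T$ (these are invariant because an arc component is triodic for all of its points or for none of them), treating $X_T$ through the second factor and $X\setminus X_T$ through the first. Assuming $X_T\neq\emptyset$ (otherwise it suffices to show $E^A_X\leq_B E_0$, which is the content below), I would fix an anchor point $p\in X_T$ and build the reduction $f\colon X\to 2^\N\times X_T$ from two ingredients: the identity on $X_T$, and a Borel reduction $g\colon X\setminus X_T\to 2^\N$ witnessing \(E^A_X|_{X\setminus X_T}\leq_B E_0\), which is the real work (see below). To keep the two blocks from colliding in the product I would \emph{reserve} an $E_0$-class: replacing $g$ by $\hat g$ where \(\hat g(x)(2n)=g(x)(n)\) and \(\hat g(x)(2n+1)=1\), so that $\hat g(x)$ always has infinitely many $1$'s and hence is never $E_0$-equivalent to $\bar 0$, while $\hat g$ is still a reduction to $E_0$. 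Then I set \(f(x)=(\bar 0,x)\) for $x\in X_T$ and \(f(x)=(\hat g(x),p)\) for $x\notin X_T$. Checking the three cases is routine: two points of $X_T$ agree in the first coordinate and are compared by $E^A_X|_{X_T}$; two points outside $X_T$ have second coordinate $p$ and are compared by $\hat g$; and a point of $X_T$ is never related to a point outside it because $\bar 0$ is not $E_0$-equivalent to any $\hat g(y)$.

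It remains to produce $g$, i.e. to show \(E^A_X|_{X\setminus X_T}\leq_B E_0\). Since $X_T$ is Borel, \Cref{Theorem: borelness of isolated points} guarantees that the set of points with trivial arc component is Borel; on this (invariant) set $E^A_X$ is equality, hence has closed classes and is smooth by \Cref{Theorem: equivalence relation with closed classes is smooth}, so it reduces to $E_0$. The remaining invariant Borel piece consists of the non-trivial non-triodic points, and by \Cref{Lemma: decomposition into hyperfinite parts} it is enough to reduce $E^A_X$ on this piece to $E_0$.

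The heart of the argument, and the step I expect to be the main obstacle, is to exploit the absence of a triod to linearize these components. Each non-trivial non-triodic arc component is a continuous image of an interval containing no simple triod, so it carries an essentially one-dimensional (linear or cyclic) structure: any attempt to leave a point in three topologically independent directions would produce a weak triod and hence a triod by \Cref{Theorem: weak triod contains triod}. The plan is to convert this into a \emph{countable} Borel equivalence relation of bounded degree. Using the Borel structure of the space of arcs and the Borel endpoint maps from \Cref{Proposition: arcs and circles are Borel} and \Cref{Lemma: Borel assignemtns of endpoints}, together with the Borel selector of \Cref{Theorem: selection for closed sets}, I would carve out a Borel set $T\subset X\setminus X_T$ meeting every non-trivial non-triodic component in a countable nonempty set (for instance, canonical ``extreme'' points of the traces $C\cap U_n$ of the component on a fixed countable basis, which are countably many per component precisely because the component is linearly/cyclically ordered). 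Since the relation $\{(x,t): t\in T,\ t\,E^A_X\,x\}$ then has countable nonempty sections, \Cref{Theorem: Lusin-Novikov} yields a Borel retraction $r\colon X\setminus X_T\to T$ with $r(x)\,E^A_X\,x$, reducing the relation to the countable Borel equivalence relation $E^A_X|_T$. Finally, the linear/cyclic order lets me join consecutive points of $T$ within each component into a Borel acyclic graph of degree at most $2$, which is a treeing; \Cref{Proposition: treeing of hyperfinite quivalence} then gives $E^A_X|_T\leq_B E_0$, and composing with $r$ finishes the reduction on this piece. Verifying that $T$ is Borel, that its trace on each component is genuinely countable, and that the induced graph is acyclic of degree $2$ is where the no-triod hypothesis must be used most carefully, and is the crux of the whole proof.
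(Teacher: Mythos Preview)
Your overall architecture matches the paper: split off $X_T$, use \Cref{Theorem: borelness of isolated points} for the trivial piece, and aim for a degree-$2$ treeing plus \Cref{Proposition: treeing of hyperfinite quivalence} on the non-trivial non-triodic piece. (The $\hat g$ encoding is correct but unnecessary: invariance of $X_T$ already gives $E^A_X\leq_B E^A_X|_{X\setminus X_T}\times E^A_X|_{X_T}$ directly.) However, the step you yourself flag as ``the crux'' is a genuine gap, not just missing detail: you never construct the Borel set $T$ or the graph. ``Canonical extreme points of $C\cap U_n$'' is not a Borel-definable notion as stated---the linear order on a non-triodic component is not handed to you, and $C\cap U_n$ can have infinitely many pieces (e.g.\ a composant of the Knaster continuum meeting a small disc), so nothing in your sketch selects countably many points in a way that depends Borel-measurably on $x$ rather than on an unavailable global parameterization of its component. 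You also do not separate out the components that are simple closed curves; on such a component any ``adjacent-point'' graph you build is a cycle, not a tree, so those must be handled on their own (they are easily smooth: each point lies on a unique simple closed curve, and \Cref{Theorem: Lusin-Novikov} yields a Borel reduction to equality).

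The paper resolves this by moving to the hyperspace of arcs rather than working with points of $X$. After embedding $X$ in $\ell_2(\N)$ and setting $S(n,p)=\{x:x(n)=p\}$, one considers, for rational $p\neq r$, the Borel family $\mathcal{J}(n,p,r)$ of arcs in $X_A$ that meet $S(n,p)$ and $S(n,r)$ exactly at their two endpoints. Within a fixed component (no triod, no simple closed curve) these arcs are forced into a discrete linear chain, and the relation ``the minimal arc containing $I$ and $J$ contains no other member of $\mathcal{J}(n,p,r)$'' is expressible by a Borel condition with at most one witness, giving a degree-$2$ acyclic Borel graph on $\mathcal{J}(n,p,r)$. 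That is the concrete treeing; Lusin--Novikov then transfers the resulting $E_0$-bound from arcs back to points, and the countably many invariant pieces $\pi_1(M(n,p,r))$ (together with $X_S$ and $X_P$) are assembled via \Cref{Lemma: decomposition into hyperfinite parts}.
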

\begin{proof}
    Since any Polish space can be embedded into the Hilbert space, we may assume that  \(X\subset \ell_2(\N)\). Let 
    \[X_P = \{x \in X: \text{ \(x\) has trivial arc component}\}\] 
    and set \(X_N = X \setminus (X_T \cup X_P)\). By \Cref{Theorem: borelness of isolated points}, all the sets \(X_T,X_P,X_N\) are Borel sets and they are \(E^A_X\)-invariant. We note that any non-triodic arc component that contains a simple closed curve is itself a simple closed curve. Consider 
    \[C = \{(x,S) \in X_N \times \mathcal{S}(X): x \in S\},\]
    where \(\mathcal{S}(X)\) is the set of all simple closed curves in \(X\). The set \(C\) is Borel as \(\mathcal{S}(X)\) is Borel by \Cref{Proposition: arcs and circles are Borel}. For every \(x \in X_N\) we have that the section \(C_x\) can contain at most one simple closed curve, and therefore we get that \(X_S = \pi_1(C)\) is Borel  by \Cref{Theorem: Lusin-Novikov}. Denote \(X_A = X_N \setminus X_S\). We have that \(X_A\) is an \(E^A_X\)-invariant Borel set, and it is the set of all points whose arc component is non-trivial and does not contain any triod or a simple closed curve.

    First, we show that \(E^A_X |_{X_A} \leq_B E _0\). Let \(\mathcal{J}(X)\) be the set of all arcs in \(X\). The set \(\mathcal{J}(X)\) of all arcs in \(X\) is Borel by \Cref{Proposition: arcs and circles are Borel} and the maps \(e,e_1,e_2\) on \(\mathcal{J}(X)\) defined as in \Cref{Lemma: Borel assignemtns of endpoints} are Borel as well. We define an auxiliary equivalence relation \(E_A\) on \(\mathcal{J}(X)\) of lying in the same arc component as 
    \[E_A = \{(I,J) \in \mathcal{J}(X) \times \mathcal{J}(X): e_1(I) E^A_Xe_1(J)\}.\]
    Since \(E^A_X\) is Borel, then so is \(E_A\).
    For any \(n \in \N\) and \(p \in \R\) consider the set
    \[S(n,p) = \{(x_i)_{i \in \N} \in \ell_2(\N): x(n) = p\},\]
    which is a closed subset of \(\ell_2(\N)\). For any distinct real numbers $p$ and $r$, consider the Borel set $\mathcal{J}(n,p,r)$ given by
$$\bigcup _{\{i,j\}=\{1,2\}}\{J \in \mathcal{J}(X): S(n,p) \cap J = \{e_i(J)\}, S(n,r) \cap J = \{e_j(J)\}, e_1(J) \in X_A \}.$$
This is the set of all arcs in $X_A$ 
that intersect the disjoint closed sets $S(n,p)$ and $S(n,r)$ only at their endpoints. We will use \Cref{Proposition: treeing of hyperfinite quivalence} to show that \(E_A |_{\mathcal{J}(n,p,r)}\) is Borel reducible to \(E_0\) for any \(n \in \N\) and any distinct \(p,r \in \R\). Fix arbitrary \(n \in \N \) and distinct \(p,r \in \R\). Consider the Borel set $D=D_1\cap D_2$ where 
$$D_1=\{(I,J,\Gamma) \in \mathcal{J}(n,p,r)^2 \times \mathcal{J}(X): I \neq J, e(\Gamma) \subset e(I) \cup e(J)\} \text{ and }$$
$$D_2=\{(I,J,\Gamma) \in \mathcal{J}(n,p,r)^2 \times \mathcal{J}(X): \Gamma \cap S(n,p) = \emptyset \text{ or } \Gamma \cap S(n,r) = \emptyset \text{ or } \Gamma = I \cup J\} .$$
Since the set \(\mathcal{J}(n,p,r)\) contains only arcs from non-triodic arc components without simple closed curves, the section $D_{(I,J)}$ contains at most one arc for any pair \((I,J)\) of arcs. Hence the set \( E = \pi_{1,2}(D)\) is Borel by \Cref{Theorem: Lusin-Novikov}. We will show that \(G = (\mathcal{J}(n,p,r),E)\) is a Borel treeing of \(E_A|_{\mathcal{J}(n,p,r)}\) that satisfies the assumptions of \Cref{Proposition: treeing of hyperfinite quivalence}.

    The fact that \(G\) is a graph follows from the construction. We need to show that connected components of the graph \(G\) are exactly equivalence classes of \(E_A |_{\mathcal{J}(n,p,r)}\). Pick \((I,J) \in E_A |_{\mathcal{J}(n,p,r)}\). Let $K$ be the smallest arc containing both $I$ and $J$, which clearly exists as we are in a non-triodic arc component without simple closed curve. Since \(K\) is compact, there are at most finitely many arcs from \(\mathcal{J}(n,p,r)\) lying in \(K\). Say \[\{I=I_0, I_1, \dots, I_k =J\}\subset \mathcal{J}(n,p,r)\] is the set of arcs lying in $K$ in the listed order. Let $0\leq i<k$ and let $K'$ be the smallest arc containing $I_i$ and $I_{i+1}$. Then  $(I_i,I_{i+1},K')\in D$ because $K'$ does not contain an element of $\mathcal{J}(n,p,r)$ other than $I_i$ and $I_{i+1}$. It follows that $(I_i,I_{i+1})\in E$. This argument shows that $I$ and $J$ are in the same connected component of the graph $G$. Conversely, if \((I,J)\in E\), then it is clear from the definition of \(G\) that \((I,J) \in E_A |_{\mathcal{J}(n,p,r)}\).
    
    Since we are considering only non-triodic arc components without simple closed curves, we get that \(G\) is acyclic and has degree at most \(2\). It follows then that \(E_A|_{\mathcal{J}(n,p,r)}\) is countable and hence \(E_A|_{\mathcal{J}(n,p,r)} \leq_B E_0\) by \Cref{Proposition: treeing of hyperfinite quivalence}. Let 
    \[
    M(n,p,r) = \{(x,J) \in X_A \times \mathcal{J}(n,p,r): x E_Ae_1(J)\},
    \]
    which is a Borel set. Since \(E_A|_{\mathcal{J}(n,p,r)}\) is countable, we get that for any \(x \in X_A\) the section \(M(n,p,r)_x\) is countable.  Then   there is a Borel uniformization of \(M(n,p,r)\) by \Cref{Theorem: Lusin-Novikov}. Moreover, this uniformization is a Borel reduction from \(E_X|_{\pi_1(M(n,p,r))}\) to \(E_A|_{\mathcal{J}(n,p,r)}\), implying that \(E|_{\pi_1(M(n,p,r))} \leq_B E_0\). Clearly, for any \(n \in \N\) and distinct \(p,r \in \R\) we have that \(\pi_1(M(n,p,r))\) is \(E^A_X\)-invariant. It is easy to see that 
    \[
    X_A=\bigcup_{\substack{p\neq q \in \Q \\
    n \in \N }}\pi_1\left( M(n,p,r)\right).
    \]
We also have that $E^A_X|_{X_S}\leq _B E_0$ for which the set $C$ defined earlier is a witness and that $E^A_X|_{X_P}$ is itself the equality relation. Combining all parts together, we get that
    \[\{\pi_1(M(n,p,r)): \ p,q \in \Q, \ p\neq r \text{ and }n \in \N \} \cup \{X_S,X_P\}\]
    is a collection as in \Cref{Lemma: decomposition into hyperfinite parts} and so \(E^A_X|_{X_N \cup X_P} \leq_B E_0\). Since \(X_T\) and \(X_N \cup X_P\) are \(E^A_X\)-invariant and disjoint, we get
    \begin{equation*}
        E_X \leq_B E_X|_{X_N \cup X_P} \times E_X |_{X_T}  \leq_B E_0 \times E_X |_{X_T}\ . \qedhere
    \end{equation*}
\end{proof}
\begin{Remark*}
    \Cref{Theorem: general_theorem} intuitively says that any complex behaviour of \(E_X^A\) on a Polish space \(X\) must arise from the triodic arc components.
\end{Remark*}
Now, we can use this theorem to prove that the arc-connection relation in the plane is Borel reducible to \(E_0\).

\begin{Theorem}
\label{Theorem: Arc-connection is E_0 in the plane}
    Let \(X\) be a \(G_\delta\) subset of the plane. Then \(E^A_X \Bleq E_0\).
\end{Theorem}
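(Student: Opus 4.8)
The plan is to derive this as a corollary of \Cref{Theorem: general_theorem} applied to the $G_\delta$ set $X \subset \R^2$ (which is Polish), so I must verify its two hypotheses, namely that $E^A_X$ and $X_T$ are Borel, and then massage the resulting upper bound $E_0 \times E^A_X|_{X_T}$ down to $E_0$. Borelness of $E^A_X$ comes for free: it is precisely the theorem of Debs and Saint Raymond, \Cref{Theorem: arc-connection relation is borel for G_delta in the plane}. So the real work is in checking that $X_T$ is Borel and in simplifying the target relation, both of which I expect to hinge on the planarity of $X$.

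The crux is the Borelness of $X_T$, and here I would combine the two facts now available. First, since $E^A_X$ is Borel, every arc component is Borel, being a section $(E^A_X)_x$ of the Borel set $E^A_X \subseteq X \times X$. Second, I would invoke Moore's theorem \Cref{Theorem: pairwise disjoint family of triods in the plane is countable} to bound the number of triodic arc components: by definition each such component contains a simple triod, and since distinct arc components are pairwise disjoint, choosing one simple triod from each produces a pairwise disjoint family of triods in the plane, which must therefore be countable. Consequently $X_T$ is a union of countably many arc components, each Borel, hence Borel. I expect this to be the genuine content of the proof; the only delicate point is the interplay between the Borelness of $E^A_X$ (giving Borel classes) and the purely topological countability supplied by Moore, and everything after it is formal.

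With both hypotheses of \Cref{Theorem: general_theorem} verified, I obtain $E^A_X \Bleq E_0 \times E^A_X|_{X_T}$, and it remains to show that the right-hand side is itself $\Bleq E_0$. By the previous paragraph, $X_T$ decomposes into countably many Borel arc components $C_i$, so the sets $2^\N \times C_i$ are $(E_0 \times E^A_X|_{X_T})$-invariant Borel pieces covering $2^\N \times X_T$, and on each of them the product relation is just $E_0$ paired with a single equivalence class, hence Borel bireducible with $E_0$. Applying \Cref{Lemma: decomposition into hyperfinite parts} then gives $E_0 \times E^A_X|_{X_T} \Bleq E_0$, and chaining the two reductions yields $E^A_X \Bleq E_0$, as desired.
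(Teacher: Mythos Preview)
Your proposal is correct and follows essentially the same route as the paper: verify the hypotheses of \Cref{Theorem: general_theorem} via \Cref{Theorem: arc-connection relation is borel for G_delta in the plane} and Moore's theorem, then collapse the product upper bound down to $E_0$. The only cosmetic difference is in the last step: the paper notes directly that $E^A_X|_{X_T} \Bleq \id(\N)$ (countably many classes) and then uses the trivial reduction $E_0 \times \id(\N) \Bleq E_0$, whereas you decompose $2^\N \times X_T$ into the invariant pieces $2^\N \times C_i$ and invoke \Cref{Lemma: decomposition into hyperfinite parts}; these amount to the same thing.
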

\begin{proof}
     By \Cref{Theorem: arc-connection relation is borel for G_delta in the plane}, we have that \(E^A_X\) is Borel. Since there are only countably many triodic arc components, the set 
    \[X_T = \{x\in X: x \text{ has a triodic arc component} \}\]
    is Borel as well. Having countably many equivalence  classes, \(E^A_X|_{X_T} \leq_B \id (\N )\). Thus,
    
     \[E^A_X \leq_B E_0 \times E^A_X|_{X_T}\leq _BE_0\times \id (\N )\leq _BE_0,\] where the first Borel reduction is by \Cref{Theorem: general_theorem} and the last one is trivial.
\end{proof}
\begin{Remark}
\label{Remark: arc-connection in Knaster continua are E_0}
Let \(K\) be the Knaster continuum (see \cite[2.9]{Nadler_continuum_theory}). This continuum is also called the bucket handle continuum by some authors. From the properties of \(K\), we have that the composant equivalence relation on \(K\) coincides with \(E^A_K\), and thus we get \(E_0 \Bleq E_{K}^A\)  as shown in \cite{Solecki_space_of_composants}. So we conclude by \Cref{Theorem: Arc-connection is E_0 in the plane} that \(E_K^A \Bsim E_0\). This can also be shown directly as was done in \cite{uyar2025nonsmoothpathconnectednessrelationreal}. Borel bireducibility of \(E_K^A\) with \(E_0\) can in fact be shown for any of the generalized Knaster continua (see \cite{Watkins_certain_inverse_limits}).
\end{Remark}

As mentioned in the introduction, there are examples of continua \(X\subset \R^3\) for which \(E_X^A\) is analytic non-Borel \cite{Becker_number_of_path_components, Kunen_Starbird_arc_components_in_metric_continua, Donne_arc_components_in_metric_continua} and so the assumption of embeddability in the plane cannot be relaxed. A natural question is whether the descriptive complexity assumption that $X\subset \mathbb{R}^2$ is a $G_\delta$ subset can be weakened or not. We will show later that the answer to this question is negative. That is, we shall construct an \(F_\sigma\) subset $X$ of the plane for which \(E_X^A\) is not even an analytic equivalence relation.
\begin{Remark}
\label{Remark: arc-connection in R^3 realizes all analytic equivalence relations}
    Becker \cite[Theorem 4.1]{Becker_number_of_path_components} states that any analytic equivalence relation can be realized as an arc-connection relation on a compact subset of \(\R^3\). Becker does not include a complete proof; however, it can be found in \cite[Theorem 3.2]{wu2025fundamental}. The construction can, in fact, be easily modified to yield a Polish space on which the arc-connection, continuum-connection and chain-connection relations are the same.
\end{Remark}

We end this section by two simple results. One is an application of \Cref{Theorem: general_theorem} to the class of circle-like continua and the other is the fact that the arc-connection relation on the product space essentially coincides with the product of the arc-connection relation on the space. We will show that the latter property holds for all of the studied connection relations.
\begin{Corollary}
    Let \(X\) be a circle-like continuum. Then \(E_X^A \leq E_0\).
\end{Corollary}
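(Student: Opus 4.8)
The plan is to apply \Cref{Theorem: general_theorem}, so I must verify its two hypotheses—that $X_T$ and $E_X^A$ are Borel—and control the triodic part. The first key observation is that a circle-like continuum contains no triod; this is a classical fact of continuum theory (circle-like, like arc-like, continua are atriodic, see \cite{Nadler_continuum_theory}). Consequently $X_T=\emptyset$, which is trivially Borel, so the only real work is to show that $E_X^A$ itself is Borel.

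To establish Borelness I would unwind the structure exactly as in the proof of \Cref{Theorem: general_theorem}, but now driven by the fact that $X_T=\emptyset$ rather than by an \emph{assumed} Borelness of $E_X^A$. Since $X_T=\emptyset$ is Borel, \Cref{Theorem: borelness of isolated points} gives that the set $X_P$ of points with trivial arc component is Borel, and hence so is $X_N:=X\setminus X_P$. Using the Borel set $\mathcal{S}(X)$ of simple closed curves together with the fact that a non-triodic arc component containing a simple closed curve must equal it, \Cref{Theorem: Lusin-Novikov} shows that $X_S=\pi_1(\{(x,S)\in X_N\times\mathcal{S}(X):x\in S\})$ is Borel and that $E_X^A|_{X_S}$ is Borel. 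On the remaining set $X_A:=X_N\setminus X_S$, whose arc components are non-triodic and free of simple closed curves, the arc-connection relation is recovered from the Borel, acyclic, degree-$\le 2$ treeings $G$ on arcs constructed in the proof of \Cref{Theorem: general_theorem}; these show, via \Cref{Proposition: treeing of hyperfinite quivalence} and the uniformizations of \Cref{Theorem: Lusin-Novikov}, that $E_X^A|_{X_A}$ is Borel (in fact $\le_B E_0$). Since $E_X^A|_{X_P}$ is just equality, combining the three invariant pieces yields that $E_X^A$ is Borel.

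With both hypotheses now in hand and $X=X_P\cup X_S\cup X_A=X_N\cup X_P$, the argument of \Cref{Theorem: general_theorem} applies with an empty triodic part: the decomposition into $E_0$-reducible invariant pieces furnished by \Cref{Lemma: decomposition into hyperfinite parts} gives $E_X^A=E_X^A|_{X_N\cup X_P}\le_B E_0$ directly.

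The main obstacle I anticipate is not the Borel-reduction bookkeeping, which is inherited from \Cref{Theorem: general_theorem}, but the two structural points at the seams. First, one must pin down cleanly that circle-like continua are atriodic, so that $X_T=\emptyset$. Second, one must be careful that because $X_T=\emptyset$ the correct conclusion to read off is the intermediate estimate $E_X^A|_{X_N\cup X_P}\le_B E_0$ from the body of the proof, rather than the literal (and here degenerate) product $E_0\times E_X^A|_{\emptyset}$ appearing in the statement of \Cref{Theorem: general_theorem}.
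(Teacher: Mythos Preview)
Your proposal is correct and follows the paper's approach exactly: circle-like continua contain no triods, so $X_T=\emptyset$, and one invokes \Cref{Theorem: general_theorem}. You are in fact more careful than the paper's two-line proof, which simply writes ``the result follows directly by \Cref{Theorem: general_theorem}'' without explicitly verifying the hypothesis that $E_X^A$ is Borel; your observation that the argument inside the proof of \Cref{Theorem: general_theorem} already delivers Borelness of $E_X^A$ once $X_T=\emptyset$ (since then $X=X_P\cup X_S\cup X_A$ and each piece is handled without ever appealing to Borelness of $E_X^A$) is the right way to close this small gap.
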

\begin{proof}
    Since \(X\) is circle-like, it does not contain any triod. Thus, the result follows directly by \Cref{Theorem: general_theorem}.
\end{proof}
\begin{Proposition}
    Let \(X\) be a Polish space. Then \(E^A_{X^\N }\) is Borel isomorphic to \((E_X^A)^\N \), and so \(E^A_{X^\N } \Bsim (E_X^A)^\N \).
\end{Proposition}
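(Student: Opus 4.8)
The plan is to show that the two equivalence relations in fact coincide as subsets of $X^\N \times X^\N$, so that the identity map on $X^\N$ serves as a Borel isomorphism and, a fortiori, as a Borel reduction in both directions. Concretely, I would prove that for $x = (x_n)_n$ and $y = (y_n)_n$ in $X^\N$ one has $x\, E^A_{X^\N}\, y$ if and only if $x_n\, E^A_X\, y_n$ for every $n \in \N$, where the relation $(E^A_X)^\N$ on $X^\N$ is understood coordinatewise. The classical fact I would rely on throughout is that in a metrizable space two points joined by a path are already joined by an arc: the image of a path is a Peano continuum and every nondegenerate Peano continuum is arc-connected. Since a countable product of Polish spaces is Polish, both $X$ and $X^\N$ are metrizable and this fact applies in each.

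For the forward implication, suppose $x$ and $y$ lie on a common arc $A \subset X^\N$. Passing to the subarc between them and parametrizing, I obtain a path $\gamma \colon [0,1] \to X^\N$ with $\gamma(0) = x$ and $\gamma(1) = y$. Composing with the continuous coordinate projection $\pi_n$ yields a path $\pi_n \circ \gamma$ in $X$ from $x_n$ to $y_n$; when $x_n \neq y_n$ the path-to-arc fact produces an arc in $X$ joining them, and when $x_n = y_n$ the conclusion is trivial by reflexivity. Hence $x_n\, E^A_X\, y_n$ for all $n$.

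For the converse, assume $x_n\, E^A_X\, y_n$ for every $n$. For each $n$ I fix an arc in $X$ through $x_n$ and $y_n$, pass to the subarc between them, and reparametrize to obtain a path $\gamma_n \colon [0,1] \to X$ with $\gamma_n(0) = x_n$ and $\gamma_n(1) = y_n$ (taking a constant path when $x_n = y_n$). Setting $\gamma(t) = (\gamma_n(t))_n$ defines a map $[0,1] \to X^\N$ which is continuous by the universal property of the product topology, since each coordinate map is continuous; moreover $\gamma(0) = x$ and $\gamma(1) = y$. Thus $x$ and $y$ are joined by a path in $X^\N$, and invoking the path-to-arc fact once more produces an arc in $X^\N$ through $x$ and $y$, that is, $x\, E^A_{X^\N}\, y$.

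Combining the two implications gives $E^A_{X^\N} = (E^A_X)^\N$ as relations on $X^\N$, so the identity map is a Borel isomorphism between them, and in particular $E^A_{X^\N} \Bsim (E^A_X)^\N$. I expect the only genuinely delicate point to be the clean application of the path-to-arc upgrade---specifically, reducing to subarcs so that the relevant points become the endpoints of a parametrizing path, and verifying continuity of the product path from continuity in each coordinate; everything else is routine bookkeeping.
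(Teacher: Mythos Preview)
Your proposal is correct and follows essentially the same approach as the paper: the paper's proof is a one-line assertion that $(x_n)_n$ and $(y_n)_n$ are arc-connected in $X^\N$ if and only if each pair $x_n, y_n$ is arc-connected in $X$, and you have simply spelled out the details of this equivalence (including the path-to-arc upgrade via Peano continua) that the paper leaves implicit.
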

\begin{proof}
    This follows easily since two points \((x_n)_{n \in \N }, (y_n)_{n \in \N } \in X^\N \) are arc-wise connected in \(X^\N \) if and only if \(x_n \text{ and }y_n\) are arc-wise connected in \(X\) for every \(n \in \N \).
\end{proof}

\section{Continuum-connection Relation}
The continuum-connection relation can be viewed as a possible generalization of the composant equivalence relation studied in \cite{Solecki_space_of_composants}. It can be easily seen that the continuum-connection relation on a compact space is smooth because the space \(\mathcal{C}(X)\) is itself compact whenever $X$ is. On a continuum, this relation has only one equivalence class.

It turns out that the continuum-connection relation is very closely related to the arc-connection relation. In fact, the continuum-connection relation is essentially the same as the arc-connection relation on the corresponding hyperspace of continua as shown below.
\begin{Theorem}
\label{Theorem: continuum-connection is acrwise connecion on hyperspace}
    Let \(X\) be a Polish space. Then \(E^C_X \Bsim E^A_{\mathcal{C}(X)}\).
\end{Theorem}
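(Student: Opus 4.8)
The plan is to establish the bireducibility by exhibiting Borel reductions in both directions between $E^C_X$ and $E^A_{\mathcal{C}(X)}$. The conceptual content is that two points of $X$ lie in a common continuum if and only if the corresponding singletons lie in a common arc inside the hyperspace $\mathcal{C}(X)$; the force of Theorem~\ref{Theorem: hyperspace of continuum is arc-wise connected} is precisely that a continuum, viewed as an element of $\mathcal{C}(X)$ to be joined, carries an arc in the hyperspace connecting any two of its subcontinua.

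First I would set up the map for $E^C_X \leq_B E^A_{\mathcal{C}(X)}$. The natural candidate is the singleton embedding $s\colon X \to \mathcal{C}(X)$, $s(x)=\{x\}$, which is a Borel (indeed continuous) injection since $\{x\}$ is a one-point continuum. For the forward implication, if $x\,E^C_X\,y$ then there is a continuum $K\subset X$ with $x,y\in K$. Both $\{x\}$ and $\{y\}$ are subcontinua of $K$, so they are elements of $\mathcal{C}(K)$, and by Theorem~\ref{Theorem: hyperspace of continuum is arc-wise connected} the space $\mathcal{C}(K)$ is arc-wise connected; hence there is an arc in $\mathcal{C}(K)\subset \mathcal{C}(X)$ joining $\{x\}$ to $\{y\}$, giving $s(x)\,E^A_{\mathcal{C}(X)}\,s(y)$. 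For the reverse implication, suppose an arc $\mathcal{A}\subset\mathcal{C}(X)$ joins $\{x\}$ and $\{y\}$; then $\mathcal{A}$ is a continuum in $\mathcal{C}(X)$, and I would take $K=\bigcup_{L\in\mathcal{A}} L \subset X$. The union of a continuum of continua in a hyperspace is itself a continuum (this is the standard fact that $\bigcup\colon \mathcal{C}(\mathcal{C}(X))\to\mathcal{C}(X)$ is well-defined and continuous), so $K$ is a continuum containing both $x$ and $y$, yielding $x\,E^C_X\,y$.

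For the converse reduction $E^A_{\mathcal{C}(X)} \leq_B E^C_X$, I would use the union map $u\colon \mathcal{C}(X)\to X$ composed appropriately, or more directly observe that the two relations are intertwined through the endpoint structure; the cleanest route is to note that $E^A_{\mathcal{C}(X)}$ is itself the continuum-connection-type relation one level up and reduce it back down. Concretely, given $K\in\mathcal{C}(X)$, one can send it to a selected point $S(K)\in K$ using the Borel selector of Theorem~\ref{Theorem: selection for closed sets}, and then verify that $K\,E^A_{\mathcal{C}(X)}\,L$ holds exactly when $S(K)\,E^C_X\,S(L)$: an arc in $\mathcal{C}(X)$ from $K$ to $L$ unions to a continuum in $X$ containing both $K$ and $L$, hence containing $S(K)$ and $S(L)$, and conversely a continuum in $X$ through $S(K)$ and $S(L)$ together with $K$ and $L$ themselves can be assembled into a single continuum whose hyperspace furnishes, via Theorem~\ref{Theorem: hyperspace of continuum is arc-wise connected}, an arc joining $K$ to $L$.

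The main obstacle I anticipate is verifying Borel measurability of these maps and of the witnessing sets, rather than the topological content, which is essentially Theorem~\ref{Theorem: hyperspace of continuum is arc-wise connected}. The singleton map is manifestly Borel, but for the backward directions one must check that forming unions of continua and selecting points are Borel operations on the relevant hyperspaces; here I would lean on the standard Borel structure of $\mathcal{K}(X)$ and $\mathcal{C}(X)$ together with the selector theorem. A secondary subtlety is ensuring that the continuum $K=\bigcup_{L\in\mathcal{A}}L$ genuinely contains $x$ and $y$ as points and not merely as degenerate members of the arc — this is immediate since $\{x\},\{y\}\in\mathcal{A}$ forces $x,y\in K$ — and that arc-connectedness in the hyperspace is detected by the connectedness of this union, which is exactly the equivalence I would spell out carefully.
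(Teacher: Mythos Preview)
Your proposal is correct and essentially identical to the paper's proof: the singleton map \(x\mapsto\{x\}\) for \(E^C_X\leq_B E^A_{\mathcal{C}(X)}\), the Borel selector \(S\) from Theorem~\ref{Theorem: selection for closed sets} for the converse, with Theorem~\ref{Theorem: hyperspace of continuum is arc-wise connected} and the union-of-a-continuum-of-continua argument doing the topological work in both directions. The only cosmetic difference is that where you form the single continuum \(K\cup M\cup L\) and invoke arc-connectedness of its hyperspace, the paper instead concatenates three arcs (inside \(\mathcal{C}(K)\), \(\mathcal{C}(M)\), \(\mathcal{C}(L)\)); the content is the same.
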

\begin{proof}
    \((\leq _B)\) Consider the mapping \(f:X \to \mathcal{C}(X)\) given by \(x \mapsto \{x\}\). This mapping is continuous and hence Borel. We will show that it is the desired Borel reduction. Let \((x,y) \in E_X^C\). Then there exists \(K \in \mathcal{C}(X)\) such that \(x,y \in K\). By \Cref{Theorem: hyperspace of continuum is arc-wise connected}, we have that the space \(\mathcal{C}(K)\) is arc-wise connected. So there is an arc \(I \subset \mathcal{C}(K)\) from \(\{x\}\text{ to }\{y\}\). Since \(\mathcal{C}(K) \subset \mathcal{C}(X)\), we have that \(\{x\}\) and \(\{y\}\) are connected by an arc in \(\mathcal{C}(X)\) as well. For the converse implication, suppose that \((\{x\},\{y\}) \in E_{\mathcal{C}(X)}^A\). Then there is a continuous mapping \(\varphi: [0,1] \to \mathcal{C}(X)\) with \(\varphi(0)=\{x\} \text{ and } \varphi(1)=\{y\}\).  Since $[0,1]$ is a continuum and $\varphi$ together with the union map are continuous, we conclude that \(\bigcup_{t \in [0,1]}\varphi(t)\) is a continuum. Thus, $(x,y)\in E^C_X$.

   \((\geq _B)\) Consider a Borel selector function \(S:\mathcal{C}(X)\rightarrow X\) as in \Cref{Theorem: selection for closed sets}. We will show that \(S\) is the desired Borel reduction. Let \((K,L) \in E_{\mathcal{C}(X)}^A\). Since \(\mathcal{C}(K)\) and \(\mathcal{C}(L)\) are arc-wise connected, we can find an arc in \(\mathcal{C}(X)\) connecting \(\{S(K)\}\) and \(\{S(L)\}\). This implies, using a similar argument as before, that \(S(K)\) and \(S(L)\) are contained in a continuum in \(X\). The other implication also follows by the same reasoning as in the first Borel reduction above. 
\end{proof}
The properties of the continuum-connection relation on compact spaces can be used to show that the continuum-connection is hypersmooth on locally compact Polish spaces.
\begin{Proposition}
\label{Proposition: continuum-connection is hypersmooth for locally compact spaces}
    Let \(X\) be a locally compact Polish space. Then \(E^C_X \Bleq E_1\).
\end{Proposition}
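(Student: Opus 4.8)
The plan is to exhibit a Borel reduction from $E^C_X$ to $E_1$ by exploiting local compactness to write $X$ as an increasing union of compact pieces and then encoding, for each point, the entire history of which compact piece ``captures'' its continuum-connection behaviour. Since $X$ is a locally compact Polish space, I can cover it by countably many open sets with compact closure and arrange these into an increasing sequence $K_0 \subset K_1 \subset \cdots$ of compact subsets of $X$ whose interiors exhaust $X$. The guiding intuition, echoing the structure of $E_1$ itself (agreement from some index $m$ onward), is that two points are continuum-connected in $X$ precisely when, from some stage on, they are continuum-connected inside each $K_n$; and on each compact $K_n$ the continuum-connection relation is smooth by the remark opening this section.

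First I would record that for compact $Y$ the relation $E^C_Y$ is smooth, so by the definition of smoothness there is a Borel map $g_n : X \cap K_n \to 2^\N$ (or into some standard Borel space, which I may assume is $2^\N$) reducing $E^C_{X \cap K_n}$ to equality. Extending each $g_n$ by a fixed constant off $K_n$, I obtain a global Borel map and then define $F : X \to 2^{\N \times \N}$ by letting the $n$-th ``row'' of $F(x)$ be $g_n(x)$, i.e. $F(x)(n,\cdot) = g_n(x)$. The candidate reduction is $F$. What must be checked is that $x \mathrel{E^C_X} y$ if and only if $F(x) \mathrel{E_1} F(y)$, that is, if and only if $g_n(x) = g_n(y)$ for all sufficiently large $n$.

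The forward direction is the routine one: if $x$ and $y$ lie in a common continuum $K$, then $K$ is compact, hence contained in some $K_N$, and for every $n \geq N$ the points $x,y$ are continuum-connected inside $X \cap K_n$ (the same $K$ works), so $g_n(x) = g_n(y)$ for all $n \geq N$, giving $F(x) \mathrel{E_1} F(y)$. The converse is where the genuine content lies and where I expect the main obstacle: from $g_n(x) = g_n(y)$ for all $n \geq m$ I only get, for each such $n$, a continuum $C_n \subset X \cap K_n$ containing both $x$ and $y$, and these continua need not stabilize or nest, so I cannot immediately produce a single continuum in $X$ witnessing $x \mathrel{E^C_X} y$. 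The key step is therefore a compactness argument: the $C_n$ live in the compact hyperspace $\mathcal{C}(K_m)$ (as $C_n \subset K_n \subset K_m$ fails for $n>m$, so one must instead fix attention on a suitable cofinal subsequence and pass to a convergent sub-subsequence in some $\mathcal{C}(K_M)$), and I would extract a Hausdorff limit $C$, argue that $C$ is a continuum containing $x$ and $y$, and conclude $x \mathrel{E^C_X} y$.

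The delicate point, and the reason local compactness is essential rather than cosmetic, is that the limiting continuum could in principle escape to the ``boundary'' and fail to stay inside $X$; here I would lean on the relationship $E^C_X \Bsim E^A_{\mathcal{C}(X)}$ from \Cref{Theorem: continuum-connection is acrwise connecion on hyperspace} together with hypersmoothness to handle the bookkeeping cleanly, reducing instead on the hyperspace level where arc-connection and the exhaustion interact more transparently. Concretely, the cleanest route is probably to prove the converse by noting that $E^C_{X \cap K_n}$ increases to $E^C_X$ as $n \to \infty$ (each pair connected in $X$ is eventually connected in some $X \cap K_n$), exhibiting $E^C_X$ directly as a countable increasing union of smooth equivalence relations, i.e. as hypersmooth; the universality of $E_1$ among hypersmooth relations then yields $E^C_X \Bleq E_1$ without constructing $F$ by hand at all. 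I would present this hypersmoothness argument as the main line, since it sidesteps the convergence subtlety by invoking the universality theorem for $E_1$ recalled in the preliminaries.
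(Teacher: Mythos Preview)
Your final paragraph is the correct proof and is essentially identical to the paper's: exhaust $X$ by compact sets $K_n$ with increasing interiors, observe that each $E^C_{K_n}$ is smooth (the paper invokes \Cref{Theorem: equivalence relation with closed classes is smooth} via the fact that on a compactum the continuum-components are closed), and check $E^C_X=\bigcup_n\bigl(E^C_{K_n}\cup\id(X)\bigr)$, whence hypersmoothness and $E^C_X\Bleq E_1$ by universality. The only cosmetic gap is that $E^C_{K_n}$ is a relation on $K_n$, not on $X$; the paper adjoins $\id(X)$ to get genuine equivalence relations on $X$, and you should do likewise rather than extending $g_n$ by a constant (which would collapse all of $X\setminus K_n$ to one class and spoil the increasing-union identity).

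Everything preceding that paragraph is an unnecessary detour. The ``converse direction'' you worry about (extracting a single continuum from a sequence $C_n$, boundary escape, Hausdorff limits) simply does not arise once you go the hypersmooth route: you never need to pass from $F(x)\,E_1\,F(y)$ back to $x\,E^C_X\,y$, because the universality theorem for $E_1$ does that work abstractly. So drop the explicit $F$ and the convergence discussion, and present only the hypersmoothness argument.
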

\begin{proof}
    Since \(X\) is locally compact and separable, we can find an increasing sequence \((G_n)_{n \in \N }\) of open sets such that \(\closure{G_n}\) is compact and \(X = \bigcup_{n \in \N }G_n\). Define \[E_n = E^C_{\closure{G_n}} \cup 
    \id(X)\]
    for any \(n\in \N\). It is easy to see that \(E_n\) is an equivalence relation on \(X\) for every \(n \in \N \). We note that since \(\closure{G_n}\) is compact, any two points of \(\closure{G_n}\) are connected by a continuum in \(\closure{G_n}\) if and only if they are in the same connected component of \(\closure{G_n}\) for all \(n \in \N \). In particular, the equivalence classes of \(E_n\) are points or connected components of \(\closure{G_n}\) for all $n\in \N $. Thus, $E_n$ has closed equivalence classes in \(X\) and by \Cref{Theorem: equivalence relation with closed classes is smooth} we get that \(E_n\) is smooth for every \(n \in \N \). We will show that \(E^C_X = \bigcup_{n\in \N } E_n\).

    The fact that \( \bigcup_{n \in \N }E_n\subset E_X^C \) follows easily since \(E_n \subset E_X^C\) for every \(n \in \N \). For the other inclusion, pick \((x,y) \in E_X^C\). Then there exists \(K \in \mathcal{C}(X)\) such that \(x,y \in K\). Since \(K\) is compact, \(K \subset G_n\) for some $n_0\in\N $. Thus \((x,y) \in E_{n_0}\subseteq \bigcup_{n \in \N }E_n\). This finishes the proof that $E^C_X$ is hypersmooth.
\end{proof}
It turns out that even in the plane the continuum-connection relation can be strictly more complicated than the arc-connection relation.

\begin{Example}
\label{Example: locally compact subset of the plane with continuum-connection E_1}
    There is a locally compact subset $X$ of the plane such that \(E_X^C \Bsim E_1\).
\end{Example}
\begin{proof}
 Let \(\mathbb{P}\) be the pseudo-arc. In \cite{Solecki_space_of_composants}, Solecki showed that the composant equivalence relation on \(\mathbb{P}\), call $F$, is Borel bireducible with \(E_1\). Denote this equivalence relation by \(F\). Let \(x \in \mathbb{P}\) and consider \(X = \mathbb{P} \setminus \{x\}\). By \Cref{Proposition: continuum-connection is hypersmooth for locally compact spaces}, we have \(E_X^{C} \Bleq E_1\). Thus, it is enough to argue that \(E_1 \Bleq E_X^{C}\). We will show that \(F \Bleq E_X^C\). To this end, let \(C\) be the composant of \(\mathbb{P}\) containing \(x\). Now pick \(y \in C \setminus \{x\}\) and consider the function \(f: \mathbb{P} \to X \) defined by
 \[
 f(p) = \begin{cases}
     p, \quad& p \in \mathbb{P} \setminus C \\
     y, \quad&p \in C.
 \end{cases}
 \]
 Then \(f\) is a Borel reduction of \(F\) to \(E_X^{C}\) and therefore \(F \Bleq E_X^{C}\).
\end{proof}

To prove that the arc-connection relation on $G_\delta$ subsets of the plane is Borel, Debs and Saint Raymond used the result by Becker and Pol \cite[Proposition 5.1]{Becker_note_on_path_components} that the arc-connection relation on such spaces is Borel if and only if all of the equivalence classes of the relation are Borel. We show below that the same holds for the continuum-connection relation in the plane. However, this equivalence of conditions is not true in general for analytic equivalence relations. 

\begin{Theorem}
\label{Theorem: continuum-connection is Borel iff all classes are Borel}
    Let \(X\) be a \(G_\delta\) subset of the plane. Then \(E_X^C\) is Borel if and only if every equivalence class of \(E_X^C\) is Borel.
\end{Theorem}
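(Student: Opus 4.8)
The forward direction is immediate: if $E_X^C$ is Borel, then each equivalence class is a section of the Borel set $E_X^C\subseteq X\times X$ and is therefore Borel. All the content lies in the converse, so from now on I assume that every continuum-component of $X$ (i.e. every $E_X^C$-class) is Borel and argue that $E_X^C$ is Borel. The guiding principle, in complete analogy with \Cref{Theorem: general_theorem} and the remark following it, is that the only source of high complexity is the triodic continuum-components; these I would control using the hypothesis together with Moore's theorem, while the remaining components are forced to be simple purely by their atriodicity.

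Call a continuum-component \emph{triodic} if it contains a triod. Since distinct components are disjoint, selecting one triod inside each triodic component produces a pairwise disjoint family of triods, so by \Cref{Theorem: pairwise disjoint family of triods in the plane is countable} there are only countably many triodic components $C_0,C_1,\dots$. Each $C_n$ is an $E_X^C$-class, hence Borel by hypothesis, so the $E_X^C$-invariant set $X_T:=\bigcup_n C_n$ is Borel, and $E_X^C\cap(X_T\times X_T)=\bigcup_n(C_n\times C_n)$ is a countable union of Borel rectangles, hence Borel. This is the one and only place where the assumption that the classes are Borel enters: I never need to identify the triodic components effectively, only to know that there are countably many and that each is Borel.

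It remains to prove that $E_X^C$ is Borel on the invariant Borel set $X_N:=X\setminus X_T$, all of whose continuum-components are atriodic. Here I would replay the treeing argument of \Cref{Theorem: general_theorem} with subcontinua in place of arcs. For a rational pair $p\neq r$ and one of the two coordinate directions, consider the Borel set of subcontinua of $X_N$ that are irreducible between the two lines $\{x=p\}$ and $\{x=r\}$ in that direction and meet them only at their extreme points. The atriodic intersection/complement lemmas \Cref{Lemma: triodic number of comp.} and \Cref{Lemma: triodic complemet connected} should guarantee that, inside a single (atriodic) component, these irreducible continua are organised linearly and locally finitely, so that a Lusin--Novikov uniformization (\Cref{Theorem: Lusin-Novikov}) of the ``consecutive continuum'' relation produces a Borel acyclic graph of degree at most $2$ treeing the ``same component'' relation on this space of continua. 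By \Cref{Proposition: treeing of hyperfinite quivalence} that relation is $\leq_B E_0$; pulling it back to points of $X_N$ through a further Lusin--Novikov uniformization, combining the countably many directions and rational pairs (with the separate, easy treatment of singleton components and of components which are simple closed curves, exactly as the set $\mathcal S(X)$ was handled in \Cref{Theorem: general_theorem}) via \Cref{Lemma: decomposition into hyperfinite parts}, would give $E_X^C|_{X_N}\leq_B E_0$, and in particular $E_X^C|_{X_N}$ Borel. Since $X_T$ and $X_N$ are disjoint invariant Borel sets, $E_X^C=\bigl(E_X^C\cap(X_T\times X_T)\bigr)\cup\bigl(E_X^C\cap(X_N\times X_N)\bigr)$ is then Borel.

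The main obstacle is precisely this non-triodic part. In \Cref{Theorem: general_theorem} Borelness of $E_X^A$ was assumed outright, and the finitely-many-arcs-through-two-slices fact was geometrically transparent for genuine arcs; here I must extract the analogous statement from atriodicity alone, namely that the irreducible subcontinua between two parallel lines inside an atriodic continuum-component form a countable family carrying a degree-$\le 2$ treeing, with the ``next continuum'' selection single-valued so that Lusin--Novikov applies. The atriodic lemmas above, and possibly \Cref{Lemma: condition for triod containing} and \Cref{Theorem: weak triod contains triod} to rule out branching, are exactly tailored to this; verifying that they assemble into an honest acyclic bounded-degree graph, and carefully isolating the circle-like sub-case, is where the real work lies, whereas the triodic part is disposed of cheaply by the hypothesis.
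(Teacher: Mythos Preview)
Your handling of the triodic part $X_T$ matches the paper. The genuine gap is on $X_N$: you aim to show $E_X^C|_{X_N}\Bleq E_0$ via a treeing, but this conclusion is false. \Cref{Example: locally compact subset of the plane with continuum-connection E_1} gives a planar, entirely atriodic $X$ (the pseudo-arc with a point removed) for which $E_X^C\Bsim E_1>_B E_0$; since $X_T=\emptyset$ there, no argument concluding $E_X^C|_{X_N}\Bleq E_0$ can be correct. Concretely, your graph of ``consecutive irreducible continua between two lines'' does not tree the same-continuum-component relation: inside an indecomposable continuum spanning both lines, irreducible subcontinua coming from distinct composants lie in one and the same continuum-component but are not linked by any chain of your consecutive-continuum edges. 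The analogy with \Cref{Theorem: general_theorem} breaks precisely because there the auxiliary relation on arcs was the \emph{arc}-component relation, which is exactly what such a treeing captures; the continuum-component relation can be strictly coarser.

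The paper takes a short route that targets only Borelness. It works with irreducible continua between \emph{points}, not lines: the set $A=\{(x,y,K)\in X^2\times\mathcal C(X): x,y\in K,\ K\text{ irreducible between }x,y\}$ is Borel because non-irreducibility is witnessed by some $L\in\mathcal C(K)$ with $x,y\in L\subsetneq K$, the $L$-sections sit in the compact $\mathcal C(K)$ minus a point and are therefore $K_\sigma$, and Arsenin--Kunugui projects. On $X_N$, \Cref{Lemma: condition for triod containing} caps each section $A_{(x,y)}$ at two elements (three pairwise distinct irreducible continua between $x$ and $y$ would force a triod), so a single application of Lusin--Novikov gives $E_X^C|_{X_N}=\pi_{1,2}\bigl(A\cap(X_N^2\times\mathcal C(X))\bigr)$ Borel directly---no treeing, no $E_0$, no slicing.
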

\begin{proof}
   Each equivalence class of any Borel equivalence relation is clearly Borel. For the other implication, assume that every equivalence class of \(E_X^{C}\) is Borel. Let 
    \[A = \{(x,y,K) \in X^2 \times \mathcal{C}(X): x,y \in K,\ K \text{ is irreducible between \(x\) and \(y\)}\}.\] We will show that this set is Borel. Consider the set 
    \[B = \{(x,y,K,L) \in X^2 \times \mathcal{C}(X)^2: x,y \in K, \ x,y \in L \text{ and } \ L \subsetneq K\},\] 
    which can be easily seen to be Borel. For any \((x,y,K) \in X^2 \times \mathcal{C}(X)\), the section \begin{align*}
        B_{(x,y,K)} = \{L \in \mathcal{C}(K): x,y \in L\} \setminus \{K\}.
    \end{align*}
    is a \(K_\sigma\) set because \(\mathcal{C}(K)\) is compact. It follows from the Arsenin-Kunugui Theorem {\cite[Theorem 35.46]{Kechris}} that \(\pi_{1,2,3}(B)\) is Borel. But since $A$ is equal to \[\{(x,y,K)\in X^2\times \mathcal{C}(X):x,y\in K\} \setminus \pi_{1,2,3}(B),\] it is a Borel set.
    
     Now, let \(X_T\) be the union of all equivalence classes of \(E_X^C\) that contain a triod. There can be at most countably many such classes by \Cref{Theorem: pairwise disjoint family of triods in the plane is countable}. Since we assume that every equivalence class is Borel, we get that \(X_T\) is Borel. Set \(X_N = X \setminus X_T\) and consider 
    \[C = A\cap (X_N^2\times \mathcal{C}(X)).\] By \Cref{Lemma: condition for triod containing}, we get that for any \((x,y) \in X_N\times X_N\) the section \(C_{(x,y)}\) contains at most two continua and therefore \(\pi_{1,2}(C)\) is Borel by \Cref{Theorem: Lusin-Novikov}. We note that any two points contained in a continuum are contained in a continuum irreducible between them. Thus, we have \(\pi_{1,2}(C) = E_{X}^C|_{X_N}\) and so it is a Borel equivalence relation. Since \(X_T\) and \(X_N\) are both \(E_X^C\)-invariant, we have \(E_X^C = E_X^C|_{X_N} \cup E_X^C|_{X_T}\) and so \(E_X^C\) is a Borel equivalence relation. 
\end{proof}
Although there are parallels between the arc-connection and the continuum-connection relations, we do not know whether the continuum-connection relation on $G_\delta$ subsets of the plane is always Borel.

We end this section by a result that relates the product of the continuum-connection relation on a space with the relation on the product space just as before.

\begin{Proposition}
    Let \(X\) be a Polish space. Then \(E_{X^\N }^C\) is Borel isomorphic to \((E_X^C)^\N \), and hence \(E_{X^\N }^C \Bsim (E_X^C)^\N \). 
\end{Proposition}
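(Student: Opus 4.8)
The plan is to reduce everything to a pointwise characterization of continuum-connection in the countable power, exactly mirroring the argument given above for the arc-connection relation. Concretely, I would first establish the claim that for any \((x_n)_{n\in\N}, (y_n)_{n\in\N} \in X^\N\), one has \((x_n)_n\, E^C_{X^\N}\, (y_n)_n\) if and only if \(x_n\, E^C_X\, y_n\) holds for every \(n\in\N\). Granting this claim, the two equivalence relations \(E^C_{X^\N}\) and \((E^C_X)^\N\) coincide as subsets of \(X^\N\times X^\N\), so the identity map on \(X^\N\) is a Borel isomorphism witnessing both that \(E^C_{X^\N}\) is Borel isomorphic to \((E^C_X)^\N\) and the bireducibility \(E^C_{X^\N}\Bsim (E^C_X)^\N\).

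For the forward implication I would use projections. Suppose \((x_n)_n\) and \((y_n)_n\) lie in a common continuum \(K \subset X^\N\). For each \(n\), the coordinate projection \(\pi_n: X^\N \to X\) is continuous, so \(\pi_n(K)\) is a continuum in \(X\) containing \(x_n = \pi_n\bigl((x_m)_m\bigr)\) and \(y_n = \pi_n\bigl((y_m)_m\bigr)\); hence \(x_n\, E^C_X\, y_n\). For the backward implication I would use products. For each \(n\) choose a continuum \(K_n\subset X\) containing \(x_n\) and \(y_n\), and set \(K = \prod_{n\in\N} K_n \subset X^\N\). Then \((x_n)_n, (y_n)_n \in K\), and it remains only to check that \(K\) is itself a continuum.

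The only point requiring care, and the natural candidate for the main obstacle, is verifying that the countable product \(K = \prod_{n\in\N} K_n\) is indeed a continuum, i.e. a compact, connected, metrizable space. Compactness is Tychonoff's theorem, metrizability follows because a countable product of metrizable spaces is metrizable, and connectedness holds because an arbitrary product of connected spaces is connected in the product topology; all three facts are standard, so this step is routine rather than a genuine difficulty. With the pointwise characterization in hand, the conclusion that \(E^C_{X^\N}\) is Borel isomorphic to \((E^C_X)^\N\), and hence that \(E^C_{X^\N} \Bsim (E^C_X)^\N\), is immediate as explained above.
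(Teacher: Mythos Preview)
Your proposal is correct and follows exactly the same approach as the paper: both directions of the pointwise characterization are obtained via projections (continuous images of continua are continua) and countable products (a countable product of continua is a continuum), after which the identity map witnesses the Borel isomorphism. The paper's proof is simply a terser version of what you wrote.
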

\begin{proof}
    This again follows easily. Let \((x_n)_{n \in \N },(y_n)_{n \in \N } \in X^\N \). If \((x_n)_{n \in \N },(y_n)_{n \in \N }\) are contained in a continuum in \(X^\N \), then the points \(x_n \text{ and }y_n\) are contained in a continuum in \(X\) for every \(n \in  \N \) since projections of continua are again continua. On the other hand, if \(x_n \text{ and }y_n\) are contained in a continuum in \(X\) for every \(n \in \N \), then \((x_n)_{n \in \N } \text{ and } (y_n)_{n \in \N }\) are contained in a continuum in \(X^\N \) because product of countably many continua is a continuum.
\end{proof}

\section{Chain Continuum-connection Relation}

In this section, we present some results concerning the complexity of the chain continuum-connection relation. The property of being chain continuum-connected is sometimes referred as continuum-chainability, which we avoid to prevent any confusion with chainable continua. Some results on the chain continuum-connected continua can be found in \cite{On_weakly_chainable_continua, Lelek_weakly_chainable_continua}. We start with an auxiliary equivalence relation related to the chain-connection relation.

\begin{Definition}
    Let \(X\) be a Polish space and \(\varepsilon > 0\). We define the equivalence relation 
    \begin{align*}
        E_X^{CC,\varepsilon} = \{(x,y)\in &X\times X: \exists K_0, \dots, K_k \in \mathcal{C}_\varepsilon(X),\, k\in \N, \text{ such that} \\ & x \in K_0, \, y \in K_k \text{ and }K_i \cap K_{i+1} \neq \emptyset \text{ for } 0\leq i<k\}
    \end{align*}
    on $X$ where 
    \(\mathcal{C}_\varepsilon(X) = \{K \in \mathcal{C}(X): \diam(K) < \varepsilon\}\).
\end{Definition}
\begin{Remark*}
 The equivalence relations \(E_X^{CC,\varepsilon}\) and \(E_X^{CC}\) may depend on the choice of the metric on \(X\) for a general Polish space \(X\). On the other hand, \(E_X^{CC}\) does not depend on the metric put on a compact space \(X\). However, all the properties of the chain-connection relation that we show are independent of the metric, and therefore we do not specify any metric in the proofs. In other words, the results hold for any choice of compatible metric.
\end{Remark*}

We again start this section by showing that the chain-connection relation is very closely related to the arc-connection relation.
\newpage

\begin{Proposition}
\label{Proposition: partial cain connection is arc-connection on hyperspace of small continua}
    Let \(X\) be a Polish space and let \(\varepsilon >0\). Then \(E^{CC,\varepsilon}_X \Bsim E^A_{\mathcal{C}_\varepsilon(X)}\).
\end{Proposition}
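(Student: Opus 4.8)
The plan is to mirror the structure of the proof of \Cref{Theorem: continuum-connection is acrwise connecion on hyperspace}, since the statement $E^{CC,\varepsilon}_X \Bsim E^A_{\mathcal{C}_\varepsilon(X)}$ is the analogue for $\varepsilon$-small continua of the bireducibility $E^C_X \Bsim E^A_{\mathcal{C}(X)}$. The key conceptual point is that a chain of $\varepsilon$-small continua $K_0,\dots,K_k$ with $K_i\cap K_{i+1}\neq\emptyset$ corresponds exactly to a finite ``walk'' in the hyperspace $\mathcal{C}_\varepsilon(X)$, and such a walk can be upgraded to an arc in $\mathcal{C}_\varepsilon(X)$; conversely, an arc in $\mathcal{C}_\varepsilon(X)$ can be broken into a chain whose union remains controlled in diameter.

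For the reduction $E^{CC,\varepsilon}_X \Bleq E^A_{\mathcal{C}_\varepsilon(X)}$, I would again use the continuous map $f\colon X\to \mathcal{C}_\varepsilon(X)$ given by $x\mapsto\{x\}$ (each singleton has diameter $0<\varepsilon$, so this is well-defined). If $(x,y)\in E^{CC,\varepsilon}_X$, take a chain $K_0,\dots,K_k\in\mathcal{C}_\varepsilon(X)$ with $x\in K_0$, $y\in K_k$ and consecutive intersections nonempty. For each $i$, pick a point $z_i\in K_i\cap K_{i+1}$; since each $K_i$ is a continuum, $\mathcal{C}(K_i)$ is arc-wise connected by \Cref{Theorem: hyperspace of continuum is arc-wise connected}, so I can find an arc inside $\mathcal{C}(K_i)\subseteq\mathcal{C}_\varepsilon(X)$ joining $\{z_{i-1}\}$ to $\{z_i\}$ (with the convention $z_{-1}=x$, $z_k=y$). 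Concatenating these finitely many arcs and pruning to a genuine arc shows $\{x\}$ and $\{y\}$ are arc-connected in $\mathcal{C}_\varepsilon(X)$. For the converse, if an arc $\varphi\colon[0,1]\to\mathcal{C}_\varepsilon(X)$ joins $\{x\}$ to $\{y\}$, then by continuity and compactness I can subdivide $[0,1]$ into $0=t_0<\dots<t_k=1$ so finely that $\varphi(t_i)\cap\varphi(t_{i+1})\neq\emptyset$ for each $i$ (consecutive images are close in Hausdorff distance, hence overlap), yielding a chain of $\varepsilon$-small continua witnessing $(x,y)\in E^{CC,\varepsilon}_X$.

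For the reverse reduction $E^A_{\mathcal{C}_\varepsilon(X)}\Bleq E^{CC,\varepsilon}_X$, I would again use a Borel selector $S\colon\mathcal{C}(X)\to X$ from \Cref{Theorem: selection for closed sets}, restricted to $\mathcal{C}_\varepsilon(X)$, sending each small continuum $K$ to the point $S(K)\in K$. If $(K,L)\in E^A_{\mathcal{C}_\varepsilon(X)}$, the same subdivision argument on the connecting arc produces a chain of $\varepsilon$-small continua from a point of $K$ to a point of $L$; prepending $K$ and appending $L$ (both of diameter $<\varepsilon$ and each containing the selected point) gives a chain from $S(K)$ to $S(L)$, so $(S(K),S(L))\in E^{CC,\varepsilon}_X$. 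Conversely, a chain witnessing $(S(K),S(L))\in E^{CC,\varepsilon}_X$ is turned back into an arc in $\mathcal{C}_\varepsilon(X)$ between $\{S(K)\}$ and $\{S(L)\}$ by the arc-wise connectedness argument, and since $K,L\in\mathcal{C}_\varepsilon(X)$ are themselves arc-connected to $\{S(K)\},\{S(L)\}$ inside $\mathcal{C}(K),\mathcal{C}(L)\subseteq\mathcal{C}_\varepsilon(X)$, we obtain $(K,L)\in E^A_{\mathcal{C}_\varepsilon(X)}$.

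The main obstacle I expect is purely a matter of the diameter constraint: one must verify throughout that every continuum used in the arc-connectedness steps actually stays within $\mathcal{C}_\varepsilon(X)$. This is exactly why the arcs are built \emph{inside} $\mathcal{C}(K_i)$ for a single small $K_i$ rather than in $\mathcal{C}(X)$ at large — every continuum appearing in such an arc is a subcontinuum of $K_i$ and hence has diameter $\le\diam(K_i)<\varepsilon$. The subdivision direction needs the observation that if two continua are within Hausdorff distance less than half the gap to $\varepsilon$ along the arc, they intersect; uniform continuity of $\varphi$ on the compact $[0,1]$ supplies a single mesh that works. Apart from checking these diameter bounds, the argument is a direct transcription of the proof of \Cref{Theorem: continuum-connection is acrwise connecion on hyperspace}.
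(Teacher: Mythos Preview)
Your overall structure is right and matches the paper's, but there is a genuine gap in the converse direction of the reduction $E^{CC,\varepsilon}_X \Bleq E^A_{\mathcal{C}_\varepsilon(X)}$. You claim that if $\varphi(t_i)$ and $\varphi(t_{i+1})$ are close in Hausdorff distance then they intersect; this is false in general. For instance, with $X=\R$ and $\varphi(t)=\{t\}$, any two distinct values $\varphi(t_i)$ are disjoint singletons no matter how fine the partition. Hausdorff proximity simply does not force intersection, so sampling the arc at finitely many parameter values cannot produce a chain.

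The paper repairs this by covering $[0,1]$ with finitely many closed subintervals $I_0,\dots,I_n$ of small length and taking the continua $L_i:=\bigcup\varphi(I_i)$ as the links of the chain, rather than single values $\varphi(t_i)$. Consecutive $L_i$'s intersect automatically because the intervals share a point $s$ and then $\varphi(s)\subseteq L_i\cap L_{i+1}$. The actual work is the diameter bound: by compactness of $[0,1]$ one has $\sup_t\diam\varphi(t)=\varepsilon-\eta$ for some $\eta>0$, and uniform continuity lets one choose the mesh so that $d_H(\varphi(r),\varphi(s))<\eta/2$ whenever $r,s$ lie in the same $I_i$; a triangle-inequality computation then gives $\diam L_i<\varepsilon$. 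With this correction your argument goes through; the forward direction via arcs inside $\mathcal{C}(K_i)\subseteq\mathcal{C}_\varepsilon(X)$ and the selector argument for the reverse reduction are both fine and essentially what the paper does.
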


\begin{proof}
    \((\leq _B)\) Consider the continuous mapping \(x \mapsto \{x\}\) defined from \(X\) to \(\mathcal{C}_\varepsilon(X)\). Let \(x,y\in X\). If \((x,y) \in E_{X}^{CC,\varepsilon}\), then the fact that \((\{x\},\{y\}) \in E^A_ {\mathcal{C}_\varepsilon(X)}\) can be shown in a similar way as in the proof of \Cref{Theorem: continuum-connection is acrwise connecion on hyperspace}. Now, suppose that there is a continuous function \(\varphi: [0,1] \to \mathcal{C}_\varepsilon(X)\) such that \(\varphi(0) = \{x\}\) and \(\varphi(1) = \{y\}\). Let $d$ be a compatible metric with the topology of $X$.

    We have \(\sup_{r \in [0,1]}\diam \varphi(r) < \varepsilon\) by compactness of \([0,1]\). Thus, there exists \(\eta > 0\) such that \(\sup_{r \in [0,1]}\diam \varphi(r) = \varepsilon - \eta\). Since \(\varphi\) is uniformly continuous, there is \(\delta > 0\) such that \(|r-s| < \delta\) implies \(d_H(\varphi(r),\varphi(s)) < \eta/2\). Let \(I\subseteq [0,1]\) be a closed interval of length less than $\delta$. Let  \(x_0,y_0\in \bigcup \varphi(I)\) such that \(x_0\in \varphi(r_0)\) and \(y_0\in \varphi(s_0)\) where \(r_0,s_0\in I\). Then since \(d_H\left(\varphi(r_0),\varphi(s_0)\right)<\eta /2\), there is \(x'\in \varphi(r_0)\) such that \(d(x',y_0)<\eta /2\). It follows that \(d(x_0,y_0)<d(x_0,x')+d(x',y_0)<(\varepsilon-\eta )+\eta /2<\varepsilon\), which shows that the diameter of the compact set \(\bigcup \varphi(I)\) is less than \(\varepsilon\).
   
    We can cover the interval \([0,1]\) by finitely many closed intervals \(I_0, \dots, I_n\) of length less than \(\delta\). Then \(\bigcup\varphi(I_0), \dots, \bigcup \varphi(I_n)\) is a chain of continua in \(\mathcal{C}_\varepsilon(X)\) connecting \(x\) and \(y\), where the argument that \(\bigcup \varphi(I_i)\) is a continuum is similar to that of \Cref{Theorem: continuum-connection is acrwise connecion on hyperspace}. 

\((\geq _B)\) A Borel selector function on \(\mathcal{C}_\varepsilon(X)\) works as in \Cref{Theorem: continuum-connection is acrwise connecion on hyperspace}.
\end{proof}

Since \(E_X^{CC} = \bigcap_{n \geq 1 }E_{X}^{CC,\frac{1}{n}}\), we have \(E_X^{CC} \Bsim  \bigcap_{n \geq 1 }E_{\mathcal{C}_{\frac{1}{n}}(X)}^A\) for any Polish space $X$ by the proposition above. We expressed the chain-connection relation as a countable intersection, and therefore the following reducibility result on the product of relations will be useful.

\begin{Lemma}
\label{Lemma: intersection of equivalence relations}
    Let \(X,Y\) be standard Borel spaces and let $E_n,F_n$ be Borel equivalence relations on $X$ and $Y$ for all $n\in \N $; respectively. Suppose that \(E_n \leq_B F_n\) for all \(n \in \N \). Then \(\displaystyle \bigcap_{n \in \N }E_n \leq_B \displaystyle\prod_{n\in \N }F_n\).
\end{Lemma}

\begin{proof}
   For any $n\in \N $, let \(f_n : X \to Y\) be a Borel reduction from \(E_n\) to \(F_n\). Then define \(f: X \to Y^\N \) as \(f(x) = (f_n(x))_{n \in \N }\). We show that \(f\) is the desired Borel reduction. The mapping $f$ can be easily seen to be Borel. Let \((x,y) \in \bigcap_{n \in \N }E_n\). For all $n\in \N $, we have \((f_n(x),f_n(y)) \in F_n\) as \(f_n\) is a Borel reduction. But then \((f_n(x),f_n(y))_{n \in \N } \) is in the product \(\prod_{n \in \N } F_n\). Conversely, suppose that \((f_n(x),f_n(y))_{n \in \N }\) is in the product \(\prod_{n \in \N } F_n\). Then since each \(f_n\) is a Borel reduction, we have \((x,y) \in E_n \) for every \(n \in \N \) implying that \((x,y) \in \bigcap_{n \in \N }E_n\).
\end{proof}

We shall use this result in the sequel. We continue our discussion by showing that there is an upper bound on the Borel complexity of \(E_X^{CC,\varepsilon}\) on locally compact Polish spaces.

\begin{Lemma}
\label{Lemma: epsilon arc-connection is K sigma}
    Let \(X\) be a locally compact Polish space and \(\varepsilon > 0\). Then \(E_X^{CC, \varepsilon}\) is a \(K_\sigma\) equivalence relation.
\end{Lemma}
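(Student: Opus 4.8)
The plan is to exhibit $E_X^{CC,\varepsilon}$ directly as a countable union of compact subsets of $X\times X$. Using local compactness and separability, I first fix an increasing exhaustion $X=\bigcup_{n}F_n$ by compact sets, say $F_n=\closure{G_n}$ with the $G_n$ as in the proof of \Cref{Proposition: continuum-connection is hypersmooth for locally compact spaces}. The only obstruction to compactness in the definition of $E_X^{CC,\varepsilon}$ is the \emph{strict} inequality $\diam<\varepsilon$ in $\mathcal{C}_\varepsilon(X)$, so I would absorb it into a diameter margin and discretize: for $k,n\in\N$ set
\[
\mathcal{Ch}_{k,n}=\Bigl\{(K_0,\dots,K_k)\in \mathcal{C}(F_n)^{k+1}: \diam K_i\le \varepsilon-\tfrac1n \text{ and } K_i\cap K_{i+1}\neq\emptyset \text{ for }0\le i<k\Bigr\}.
\]
I claim each $\mathcal{Ch}_{k,n}$ is compact. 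Since $F_n$ is compact, $\mathcal{C}(F_n)$ is a compact subset of $\mathcal{C}(X)$ (it is closed in the compact hyperspace $\mathcal{K}(F_n)$), so $\mathcal{C}(F_n)^{k+1}$ is compact; and $\mathcal{Ch}_{k,n}$ is the intersection of this compact set with the closed conditions $\{\diam K_i\le \varepsilon-1/n\}$ and $\{K_i\cap K_{i+1}\neq\emptyset\}$. Here I would use that $(A,B)\mapsto\dist(A,B)$ and $K\mapsto\diam K$ are continuous on the hyperspace, so both conditions are zero/sublevel sets of continuous functions, hence closed.

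Next I would push each chain space down to its pairs of admissible endpoints. Define
\[
A_{k,n}=\bigl\{(x,y)\in X\times X: \exists\,(K_0,\dots,K_k)\in\mathcal{Ch}_{k,n}\text{ with }x\in K_0,\ y\in K_k\bigr\}.
\]
This is compact because it is the image under the continuous coordinate projection to $X\times X$ of the set
\[
\bigl\{\bigl((K_0,\dots,K_k),x,y\bigr)\in \mathcal{Ch}_{k,n}\times F_n\times F_n: x\in K_0,\ y\in K_k\bigr\},
\]
which is a closed subset of the compact set $\mathcal{Ch}_{k,n}\times F_n\times F_n$; closedness uses that the membership relation $\{(K,x):x\in K\}$ is closed in $\mathcal{K}(X)\times X$, again as the zero set of $(K,x)\mapsto\dist(x,K)$.

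It then remains to verify that $E_X^{CC,\varepsilon}=\bigcup_{k,n\in\N}A_{k,n}$, which gives the $K_\sigma$ conclusion. The inclusion $\supseteq$ is immediate since a chain in $\mathcal{Ch}_{k,n}$ consists of continua of diameter $<\varepsilon$ that connect $x$ and $y$. For $\subseteq$, given $(x,y)\in E_X^{CC,\varepsilon}$ witnessed by a chain $K_0,\dots,K_k$, I would note that $\max_i\diam K_i=\varepsilon-\delta$ for some $\delta>0$ and that the compact set $\bigcup_i K_i$ lies in some $F_N$; choosing $n\ge\max(N,1/\delta)$ makes the chain lie in $\mathcal{Ch}_{k,n}$, so $(x,y)\in A_{k,n}$. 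I expect the main work to be the two compactness verifications in the Vietoris topology (closedness of the intersection and membership relations, compactness of $\mathcal{C}(F_n)$), together with this margin-and-exhaustion argument that simultaneously controls the diameters strictly below $\varepsilon$ and confines the whole chain to a single compact $F_n$.
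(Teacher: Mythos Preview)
Your proposal is correct and follows essentially the same route as the paper: exhaust $X$ by compact sets $F_n=\closure{G_n}$, replace the strict diameter bound by $\le\varepsilon-\tfrac1n$, and for each fixed chain length $k$ and index $n$ observe that the resulting set of pairs is compact (closed conditions in the compact space $\mathcal{C}(F_n)^{k+1}\times F_n^2$). The only cosmetic difference is that you tie the exhaustion index and the diameter margin to a single parameter $n$ and go straight to the compact pieces $A_{k,n}$, whereas the paper first assembles the $K_\sigma$ sets $E^{\varepsilon}_{n,k}$ and then takes the union over $k$ and $n$; both arguments are the same in substance.
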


\begin{proof}
    Since \(X\) locally compact and separable, we can find an increasing sequence \((G_n)_{n \in \N }\) of open sets in \(X\) such that \(\closure{G_n}\) is compact and \(X = \bigcup_{n \in \N }G_n\). For any $\varepsilon>0$, we set 
    \[E_n^{\varepsilon} = E_{\closure{G_n}}^{CC, \varepsilon} \cup \id (X).\]
 We show that \(E_n^{\varepsilon}\) is \(K_\sigma\) and that \(E_{X}^{CC,\varepsilon} = \bigcup_{n \in \N }E_n^\varepsilon\). Fix \(k \in \N \). Then 
 \begin{align*}
     \{(x,y)\in \closure{G_n}\times \closure{G_n}: \exists &K_0, \dots ,K_k \in \mathcal{C}(\closure{G_n}) \text{ of diameter }\leq \delta  \text{ such that }\\ & x \in K_0, \ y \in K_k  \text{ and } \ K_i \cap K_{i+1} \neq \emptyset \text{ for }0\leq i<k\}
\end{align*}
is a compact set for any \(\delta >0\) because all the relations defining the set are closed relations on compact sets. It follows that the set 
\begin{align*}
        E^\varepsilon_{n,k} = \{(x,y)&\in \closure{G_n}\times \closure{G_n}: \exists K_0, \dots K_k \in \mathcal{C}_\varepsilon(\closure{G_n}) \text{ such that }\\  & x \in K_0, \ y \in K_k \text{ and }K_i \cap K_{i+1} \neq \emptyset \text{ for }0\leq i<k\}
    \end{align*}
    is \(K_\sigma\) because it can be written as a countable union of the sets of the previous form. But then $E^{CC,\varepsilon}_{\closure{G_n}}=\bigcup _{k\in \N } E^\varepsilon_{n,k}$. This implies that $E^\varepsilon_n$ is a $K_\sigma$ set.

Since \(E_n^\varepsilon \subset E_{X}^{CC, \varepsilon}\) for all \(n \in \N \), we have \(\bigcup_{n \in \N }E_n^\varepsilon \subset E_{X}^{CC, \varepsilon}\). For the other inclusion, let \((x,y) \in E_X^{CC, \varepsilon}\). Then there is \(k\in \N\) and \(K_0, \dots,K_k \in \mathcal{C}_{\varepsilon}(X)\) such that \(x \in K_0\), \(y \in K_k\) and \(K_i \cap K_{i+1} \neq \emptyset\) for \( 0\leq i<k\). For any $i\in \{0,...,k\}$, we can find $n_i\in \N $ such that $K_i\subset G_{n_i}$ because $K_i$ is compact. Let \(m = \max _{0 \leq i \leq k}n_i\). Then we have \(K_i \in G_m\) for all  \(0\leq i \leq k\) and thus \((x,y) \in E_{m}^\varepsilon\subset \bigcup_{n \in \N }E_n^\varepsilon  \).

It follows that \(E_X^{CC,\varepsilon}\) is a \(K_\sigma\) equivalence relation.
\end{proof}

We can use this result together with the existence of a universal \(K_\sigma\) equivalence relation to bound the complexity of the chain continuum-connection relation on locally compact Polish spaces. 
\begin{Theorem}
\label{Theorem: chain continuum relation is smaller than l infinity to omega}
    Let \(X\) be a locally compact Polish space. Then \(E_X^{CC} \Bleq (\R/\ell_{\infty})^\N \).
\end{Theorem}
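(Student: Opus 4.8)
The plan is to exploit the decomposition $E_X^{CC} = \bigcap_{n \geq 1} E_X^{CC,\frac{1}{n}}$ already recorded in the excerpt and to chain together exactly three of the results stated above. First I would observe that, since $X$ is locally compact and Polish, \Cref{Lemma: epsilon arc-connection is K sigma} shows that each relation $E_X^{CC,\frac{1}{n}}$ is a $K_\sigma$ equivalence relation on $X$; in particular each one is Borel, which is what is needed to treat it as a genuine Borel equivalence relation in the subsequent steps.

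Second, because $X$ is Polish, \Cref{Theorem: l infinity is universal K sigma} (universality of $\R/\ell_\infty$ among $K_\sigma$ equivalence relations) applies to each $E_X^{CC,\frac{1}{n}}$ and yields a Borel reduction
\[
E_X^{CC,\frac{1}{n}} \Bleq \R/\ell_\infty \qquad \text{for every } n \geq 1.
\]
Third, I would set $F_n = \R/\ell_\infty$ for all $n$ and feed the two families $\bigl(E_X^{CC,\frac{1}{n}}\bigr)_n$ and $(F_n)_n$ into \Cref{Lemma: intersection of equivalence relations}. Its conclusion gives
\[
E_X^{CC} = \bigcap_{n \geq 1} E_X^{CC,\frac{1}{n}} \;\Bleq\; \prod_{n \in \N} \R/\ell_\infty = (\R/\ell_\infty)^\N,
\]
which is precisely the desired statement.

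The only matters requiring attention are bookkeeping rather than genuine obstacles. One must verify that the hypotheses of each cited result are met: that each $E_X^{CC,\frac{1}{n}}$ is a Borel equivalence relation (immediate, as $K_\sigma$ sets are Borel and these are equivalence relations by construction), and that reindexing the countable family $\{1/n : n \geq 1\}$ by $\N$ is harmless when applying \Cref{Lemma: intersection of equivalence relations}. I do not expect any substantial difficulty here, since the real content — local compactness forcing the $K_\sigma$ structure of the truncated relations, and the universality of $\R/\ell_\infty$ — has been carried out in the preceding lemmas, and the present theorem is essentially their assembly.
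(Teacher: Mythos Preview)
Your proposal is correct and matches the paper's proof essentially line for line: decompose $E_X^{CC}$ as $\bigcap_{n\geq 1} E_X^{CC,\frac{1}{n}}$, invoke \Cref{Lemma: epsilon arc-connection is K sigma} and \Cref{Theorem: l infinity is universal K sigma} to get $E_X^{CC,\frac{1}{n}} \Bleq \R/\ell_\infty$ for each $n$, and then apply \Cref{Lemma: intersection of equivalence relations}. The bookkeeping concerns you raise are indeed non-issues.
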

\begin{proof}
    We have \(E_{X}^{CC} = \bigcap_{n \geq 1}E_X^{CC, \frac{1}{n}}\). Moreover, \(E_X^{CC, \frac{1}{n}} \Bleq \R/\ell_\infty\) for all \(n\geq 1\) by \Cref{Lemma: epsilon arc-connection is K sigma} and \Cref{Theorem: l infinity is universal K sigma}. Thus, \(E_X^{CC} \Bleq (\R/\ell_{\infty})^\N  \) by \Cref{Lemma: intersection of equivalence relations}.
\end{proof}
We will show later that this upper bound is optimal. However, it turns out that the situation is a lot simpler in the plane. Let us first state a technical lemma showing that under some mild assumptions the chain-connection is the same as the arc-connection relation.

\begin{Lemma}
\label{Lemma: arc-connection is the same as chain continuum-connection without triod}
    Let \(X\) be a metric space. Suppose that for every \(n \in \N\), no equivalence class of \(E_X^{CC,\frac{1}{n}}\) contains a triod. Then \(E_X^{CC} = E_X^A\).
\end{Lemma}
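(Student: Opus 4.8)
The plan is to prove the two inclusions separately, the inclusion \(E_X^A \subseteq E_X^{CC}\) being the routine one. If \(x\) and \(y\) lie on a common arc \(I \subseteq X\), then for each \(n\) one cuts \(I\) into finitely many subarcs of diameter \(<1/n\); these are continua forming a \(1/n\)-chain from \(x\) to \(y\), so \((x,y) \in E_X^{CC,1/n}\) for every \(n\) and hence \((x,y) \in E_X^{CC}\). The whole difficulty lies in the reverse inclusion \(E_X^{CC} \subseteq E_X^A\).

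So fix \((x,y) \in E_X^{CC}\) with \(x \neq y\). For each \(n\) choose a \(1/n\)-chain of continua \(K_0^n, \dots, K_{k_n}^n\) witnessing \((x,y) \in E_X^{CC,1/n}\) and set \(Y_n = \bigcup_{i} K_i^n\). The one place where the hypothesis enters is the observation that \(Y_n\) is an \emph{atriodic} continuum: it is compact and connected (a finite chain of continua with consecutive intersections) and contains \(x\) and \(y\); moreover any two of its points are joined by a subchain of diameter \(<1/n\), so all of \(Y_n\) lies in a single \(E_X^{CC,1/n}\)-class, and by assumption that class, hence \(Y_n\), contains no triod. I also record that for all large \(n\) the continuum \(Y_n\) is decomposable: since \(x\neq y\), eventually each \(K_i^n\) is a proper subcontinuum (its diameter is \(<1/n\) while \(\diam Y_n\) is bounded below by the distance between \(x\) and \(y\)), and a continuum which is a finite union of proper subcontinua cannot be indecomposable. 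Thus \((x,y)\) is witnessed, at every scale \(1/n\), by an atriodic continuum that is finely chain covered.

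From here the aim is to manufacture a single continuum \(M \subseteq X\) containing \(x\) and \(y\) which is an arc, or at least a Peano continuum, so that the arcwise connectedness of Peano continua \cite[Chapter VIII]{Nadler_continuum_theory} yields the required arc. I would first pass to a subcontinuum \(M_n \subseteq Y_n\) irreducible between \(x\) and \(y\) (obtained by Zorn's lemma), which remains atriodic. Using the structure theory of atriodic continua --- in particular \Cref{Lemma: triodic number of comp.}, \Cref{Lemma: triodic complemet connected} and \Cref{Lemma: condition for triod containing}, together with \Cref{Theorem: weak triod contains triod} --- one controls how the subcontinua of \(M_n\) can meet, and feeds this into the canonical monotone ``arc of tranches'' decomposition of an irreducible continuum. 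The fine chain covers, available at every scale, should force the tranches lying between \(x\) and \(y\) to have diameter tending to \(0\): a positive-diameter tranche would separate two points that are finely chain connected only through arbitrarily long, large-diameter subcontinua, exactly the way the limit bar is separated from the curve in the \(\sin(1/x)\)-continuum, contradicting \((x,y)\in E_X^{CC}\). This degeneracy is what upgrades arc-likeness to local connectedness, and a locally connected continuum irreducible between two points is an arc, since the arc joining the two points inside a Peano continuum is a subcontinuum containing both and hence, by irreducibility, equals the whole continuum.

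The step I expect to be genuinely hard is precisely this last one: passing from the finite, combinatorial chain data --- which only says that each \(Y_n\) is ``arc-like at scale \(1/n\)'' --- to an honest arc in \(X\), that is, ruling out indecomposable or merely arc-like-but-not-arcwise-connected obstructions by simultaneously exploiting atriodicity and fine chain connectedness at \emph{all} scales. A secondary technical point is that \(X\) is only assumed to be a metric space, so the sequence \((M_n)\) need not converge in any hyperspace; I would handle this by carrying out the limiting argument inside a suitable compact continuum and extracting a convergent subsequence in the relevant \(\mathcal{C}(\cdot)\), the Hausdorff limit being the candidate continuum \(M\) through \(x\) and \(y\). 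Reconciling the scale-dependent irreducible continua \(M_n\) with a single limit \(M\) on which the all-scales chain condition still applies is the crux that must be argued carefully.
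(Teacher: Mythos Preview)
Your overall strategy --- pass to irreducible atriodic continua and aim for a Peano/arc conclusion --- is the right one, but the passage from the scale-dependent continua \(M_n\) to a single continuum \(M\) has a genuine gap, and it is exactly the step you yourself flag as ``the crux''.

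The Hausdorff-limit route you propose cannot be made to work as stated: \(X\) is only a metric space, so there is no compact ambient hyperspace in which the \(M_n\) would have a convergent subsequence, and there is no reason for the chain unions \(Y_n\) (or the \(M_n\)) at different scales to sit inside a common compact subset of \(X\). You suggest ``carrying out the limiting argument inside a suitable compact continuum'', but no such continuum is available a priori; the whole point is to \emph{produce} one. Even if a limit \(M\) existed, transferring the all-scales chain-covering property to \(M\) is not automatic.

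The paper bypasses this entirely with a finiteness observation you have the ingredients for but do not use. Work not inside \(Y_n\) but inside the whole \(E_X^{CC,1/n}\)-class \(C_n\) of \(x\) (which contains no triod by hypothesis). The contrapositive of \Cref{Lemma: condition for triod containing} says that in a triod-free space there are \emph{at most two} continua irreducible between \(x\) and \(y\); hence the set \(\mathcal I_n\) of such irreducible continua contained in \(C_n\) has \(|\mathcal I_n|\le 2\). Since \(C_{n+1}\subset C_n\) we get \(\mathcal I_{n+1}\subset\mathcal I_n\), and each \(\mathcal I_n\) is nonempty (any chain union \(Y_n\subset C_n\) contains an irreducible subcontinuum). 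A decreasing sequence of nonempty sets of size \(\le 2\) has nonempty intersection, so there is a \emph{single} continuum \(K\) irreducible between \(x\) and \(y\) that lies in \(C_n\) for all \(n\); in particular it is covered by a \(1/n\)-chain for cofinally many \(n\) (those for which the irreducible continuum inside \(Y_n\) equals \(K\)). No limiting process is needed at all.

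Once this \(K\) is in hand, the Peano verification is much more direct than the tranche analysis you sketch: if \(K\subset\bigcup_{i\le k}K_i\) with each \(K_i\in\mathcal C_{1/n}(X)\), then by \Cref{Lemma: triodic number of comp.} each \(K\cap K_i\) has at most two components, so \(K\) is a finite union of subcontinua of diameter \(<1/n\). Since \(n\) is arbitrary, \(K\) is Peano by the standard characterization \cite[Theorem~8.4]{Nadler_continuum_theory}, hence contains an arc from \(x\) to \(y\).

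In short: you cite \Cref{Lemma: condition for triod containing} but only to ``control how subcontinua meet''. Its real role here is the cardinality bound \(|\mathcal I_n|\le 2\), which turns your hoped-for limit into a pigeonhole argument.
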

\begin{proof}
    We need to show two inclusions.
    The inclusion \(E_X^{A} \subset E_X^{CC}\) holds trivially. To show the other inclusion, let \((x,y) \in E_X^{CC}\).  For every \(n \geq 1\), let \(C_n\) be the equivalence class of \(E_X^{CC, \frac{1}{n}}\) that contains \(x\) and \(y\). Then \(C_n\) does not contain a triod for any $n\geq 1 $ by the assumption. Moreover, the points \(x \text{ and }y\) are contained in at most two distinct continua in \(C_n\) that are irreducible between them by \Cref{Lemma: condition for triod containing}. Since we have \(C_{n+1} \subset C_n\) for every \(n\geq 1\), there exists a continuum \(K \subset X\) irreducible between \(x\) and \(y\), where \(K\) is contained in a \(\frac{1}{n}\)-chain of continua between \(x\) and \(y\) for every \(n \geq 1 \). We show that \(K\) is a Peano continuum and thus it is arc-wise connected. To achieve this, we use a standard characterization of Peano continua (see {\cite[Theorem 8.4]{Nadler_continuum_theory}}).

    Pick \(\delta > 0\) and let \(n \in \N \) such that \(\frac{1}{n} < \delta\). We can find \(k \in \N\) and \(K_0,...,K_k \in \mathcal{C}_{\frac{1}{n}}(X)\) such that \(x \in K_0\), \(y \in K_k\), \(K \subset \bigcup_{i=0}^{k}K_i\) and \(K_i \cap K_{i+1} \neq \emptyset\) for every \(0\leq i<k \). Since \(K \subset \bigcup_{i=0}^{k}K_i\) and \(\bigcup_{i=0}^{k}K_i\) is a continuum that does not contain any triod, we get that \(K \cap K_i\) has at most two components for every \(0\leq i\leq k\) by \Cref{Lemma: triodic number of comp.}. For every \(0\leq i\leq k\), let \(K^0_i,K^1_i \in \mathcal{C}_{\frac{1}{n}}(X)\) be such that \(K \cap K_i = K^0_i \cup K^1_i\). Then we have that 
    \[K = \bigcup_{i=0}^k \bigcup_{j=0}^1 K^j_i.\]
    Since \(\delta\) was arbitrary we get that \(K\) is a Peano continuum, and thus there is an arc in \(X\) connecting \(x\text{ and }y\). Hence \((x,y) \in E_X^{A}\). 
\end{proof}

\begin{Corollary}
\label{Corollary: for a space without riods chain connection is arc-connection}
    Let \(X\) be a Polish space. Suppose that \(X\) does not contain any triod. Then \(E_X^{CC} = E_X^A\).
\end{Corollary}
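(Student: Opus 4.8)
The plan is to derive this corollary as an immediate consequence of the preceding Lemma \ref{Lemma: arc-connection is the same as chain continuum-connection without triod}. That lemma establishes that $E_X^{CC} = E_X^A$ whenever no equivalence class of $E_X^{CC,\frac{1}{n}}$ contains a triod for any $n \in \N$. So the entire task reduces to verifying that the hypothesis of the corollary—namely that $X$ contains no triod at all—implies this per-$n$ triod-freeness condition on the equivalence classes.

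The key observation is that each equivalence class of $E_X^{CC,\frac{1}{n}}$ is a subset of $X$, and any triod contained in such a class would in particular be a triod contained in $X$. First I would note that a triod is, by definition, a continuum $K \subset X$ admitting a subcontinuum $L$ with $K \setminus L$ having at least three components; in particular a triod is a genuine topological subspace of $X$. Since $X$ is assumed to contain no triod, no subset of $X$ can contain a triod either, because containment of a triod is a statement about the subspace topology inherited from $X$. Therefore, for every $n \in \N$, no equivalence class of $E_X^{CC,\frac{1}{n}}$ can contain a triod, as such a class is merely a subset of $X$.

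Having verified the hypothesis, I would then simply invoke Lemma \ref{Lemma: arc-connection is the same as chain continuum-connection without triod} to conclude $E_X^{CC} = E_X^A$. This completes the argument.

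I do not anticipate any genuine obstacle here, as the corollary is a direct specialization of the lemma. The only point requiring the slightest care is articulating that ``$X$ contains no triod'' is a hereditary property passed to all subsets: if $X$ has no triod, then trivially neither does any subset of $X$, since a triod in a subset $A \subseteq X$ would be a triod in $X$. Thus the uniform triod-freeness of all the equivalence classes of the relations $E_X^{CC,\frac{1}{n}}$ follows at once, and the corollary is immediate.
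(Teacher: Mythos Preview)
Your proposal is correct and takes essentially the same approach as the paper, which simply states that the corollary follows from Lemma~\ref{Lemma: arc-connection is the same as chain continuum-connection without triod}. Your added explanation that triod-freeness of $X$ passes to all subsets, and hence to every $E_X^{CC,\frac{1}{n}}$-class, just makes explicit the trivial verification the paper leaves implicit.
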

\begin{proof}
    This follows from \Cref{Lemma: arc-connection is the same as chain continuum-connection without triod}.
\end{proof}

\begin{Theorem}
\label{Theorem: chain continuum-connection is E_0 in the plane on locally compact sets}
 Let \(X\) be a locally compact subset of the plane. Then \(E_X^{CC} \Bleq E_0\).
\end{Theorem}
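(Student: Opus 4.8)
The plan is to separate $X$ according to how long triods persist as the scale shrinks, to recognise the chain-connection relation as the arc-connection relation on the part where triods eventually disappear, and to exploit Moore's theorem to obtain smoothness on the part where triods survive at every scale.

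I first collect the ingredients. A locally compact separable metric space is Polish, so $X$ is a Polish, hence $G_\delta$, subspace of $\R^2$; by \Cref{Theorem: Arc-connection is E_0 in the plane} this gives $E_X^A \Bleq E_0$. By \Cref{Lemma: epsilon arc-connection is K sigma} each $E_X^{CC,1/m}$ is $K_\sigma$, in particular Borel, so $E_X^{CC}=\bigcap_{m\ge 1} E_X^{CC,1/m}$ is Borel and every one of its classes is Borel (indeed $K_\sigma$), being a section of a $K_\sigma$ set. For $m \ge 1$ let $T_m$ be the union of those $E_X^{CC,1/m}$-classes that contain a triod. Distinct classes are disjoint, so triods chosen from distinct such classes form a pairwise disjoint family; \Cref{Theorem: pairwise disjoint family of triods in the plane is countable} then bounds the number of these classes by $\aleph_0$, and $T_m$, being a union of countably many Borel classes, is Borel. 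The sets $T_m$ decrease in $m$, and the function $\mu(x)=\sup\{m : x\in T_m\}\in\{0,1,\dots,\infty\}$ is an $E_X^{CC}$-invariant Borel function. Hence $\{\mu<\infty\}$ and $\{\mu=\infty\}$ are complementary $E_X^{CC}$-invariant Borel sets.

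On $\{\mu<\infty\}$ I claim $E_X^{CC}$ and $E_X^A$ agree. If $x,y\in\{\mu<\infty\}$ and $x\,E_X^{CC}\,y$, then $x,y$ lie in a common $E_X^{CC,1/m}$-class $C_m$ for every $m$, and $C_m$ is triod-free whenever $m>\mu(x)$. The proof of \Cref{Lemma: arc-connection is the same as chain continuum-connection without triod} invokes triod-freeness of the classes $C_m$ only for large $m$: it extracts from the nested tail a continuum $K$ irreducible between $x$ and $y$ that is covered by $1/m$-chains for arbitrarily large $m$, and then verifies the Peano property of $K$ at each scale $\delta$ by working at some $m$ with $1/m<\delta$. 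Consequently the same argument applies here and produces an arc in $X$ from $x$ to $y$; together with the trivial inclusion $E_X^A\subseteq E_X^{CC}$ this yields $E_X^{CC}|_{\{\mu<\infty\}}=E_X^A|_{\{\mu<\infty\}}\Bleq E_X^A\Bleq E_0$.

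On $\{\mu=\infty\}$ I show $E_X^{CC}$ is smooth. Fixing any enumeration of the countably many triodic $E_X^{CC,1/m}$-classes, the map sending $x\in T_m$ to the index of its class is a Borel function $g_m\colon T_m\to\N$ with $x\,E_X^{CC,1/m}\,y\iff g_m(x)=g_m(y)$ for $x,y\in T_m$. Since $\{\mu=\infty\}=\bigcap_m T_m\subseteq T_m$ for every $m$, the map $\Phi=(g_m)_{m\ge1}\colon\{\mu=\infty\}\to\N^\N$ is Borel and satisfies $x\,E_X^{CC}\,y\iff(\forall m)\,x\,E_X^{CC,1/m}\,y\iff\Phi(x)=\Phi(y)$. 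Thus $E_X^{CC}|_{\{\mu=\infty\}}$ is the pullback of equality, hence smooth, so $E_X^{CC}|_{\{\mu=\infty\}}\Bleq E_0$. Applying \Cref{Lemma: decomposition into hyperfinite parts} to the two invariant Borel pieces $\{\mu<\infty\}$ and $\{\mu=\infty\}$ gives $E_X^{CC}\Bleq E_0$.

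I expect the main point requiring care to be the strengthening of \Cref{Lemma: arc-connection is the same as chain continuum-connection without triod} used on $\{\mu<\infty\}$, namely that triod-freeness of the classes $C_m$ for all sufficiently large $m$ (rather than all $m$) already forces $x$ and $y$ to be arc-connected; this needs a careful reading of that lemma's proof to confirm that no use of the small classes is made. The descriptive-set-theoretic bookkeeping—Borelness of $T_m$, of $\mu$, and of the index maps $g_m$—is comparatively routine and rests only on Moore's countability theorem and on the fact that sections of $K_\sigma$ sets are Borel, in the spirit of the argument in \Cref{Theorem: continuum-connection is Borel iff all classes are Borel}.
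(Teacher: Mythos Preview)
Your proposal is correct and follows essentially the same route as the paper: the decomposition into $T=\{\mu=\infty\}$ and $N=\{\mu<\infty\}$ is exactly the paper's $T=\bigcap_n T_n$ and $N=\bigcup_n N_n$, the smoothness on $T$ is obtained by the same map into a countable product indexed by the triodic classes at each scale, and on $N$ both arguments reduce to $E_X^A$ via \Cref{Lemma: arc-connection is the same as chain continuum-connection without triod} and then invoke \Cref{Theorem: Arc-connection is E_0 in the plane}. You are in fact slightly more careful than the paper at one point: the paper cites \Cref{Lemma: arc-connection is the same as chain continuum-connection without triod} directly for $N$, whereas its hypothesis (triod-freeness of \emph{all} classes $C_n$) need not hold on $N$; your observation that the proof of that lemma uses only a tail of the $C_n$'s is exactly what is needed, and it is indeed true---the irreducible continuum $K$ is extracted from the nested tail $(\mathcal{I}_n)_{n\ge m_0}$, and the Peano verification at scale $\delta$ uses only some $n$ with $1/n<\delta$, which can be taken $\ge m_0$.
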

\begin{proof}
    By \Cref{Lemma: epsilon arc-connection is K sigma}, we have that \(E_X^{CC, \varepsilon}\) is Borel for every \(\varepsilon > 0\). For any \(n\geq 1\), let \(T_n\) be the union of all equivalence classes of \(E_X^{CC,\frac{1}{n}}\) that contain a triod. Since \(E_X^{CC,\frac{1}{n}}\) is Borel, then so is \(T_n\) by \Cref{Theorem: pairwise disjoint family of triods in the plane is countable}. We note that \(T_{n+1} \subset T_n\) for all \(n\geq 1\). Let \(N_n = X \setminus T_n\) and consider \(N = \bigcup_{n\geq 1 }N_n\) and \(T = \bigcap_{n \geq 1 } T_n\). Since \(T_n\) is \(E_X^{CC}\)-invariant for every \(n \geq 1\), we get that \(T\) and \(N\) are also \(E_X^{CC}\)-invariant. By \Cref{Lemma: arc-connection is the same as chain continuum-connection without triod}, we have that \(E_X^{CC}|_N = E^{A}_X|_N\).    

It follows that \(E_X^{CC}|_N = E^{A}_X|_N\Bleq E_0\) by \Cref{Theorem: Arc-connection is E_0 in the plane}. We will show that \(E_{X}^{CC}|_T\) is smooth, which suffices to conclude that \(E_{X}^{CC} \Bleq E_0\) as \(T\) and \(N\) are \(E_{X}^{CC}\)-invariant.

Without loss of generality, we may assume that each \(T_n\) is non-empty and has exactly countably infinite equivalence classes of \(E_{X}^{CC,\frac{1}{n}}\). Let \(R_n=\{x_n^{i}: i \in \N \}\) be a set that contains exactly one point from every equivalence class of \(T_n\) in \(E_{X}^{CC, \frac{1}{n}}\)  for all \(n\geq 1 \). Given \(x \in T\), let \(f(x)\) be the unique point in \(\prod_{n\geq 1 }R_n\) such that \((x,f(x)) \in E_{X}^{CC, \frac{1}{n}}\) for every \(n \geq 1 \). Then $f:T\rightarrow \prod_{n\geq 1 }R_n$ is a Borel mapping because the inverse image of a singleton is either empty or a $E^{CC}_X$-class. It is straightforward to check that \(f\) Borel reduces $T$ to the equality relation on the product. 
\end{proof}

We do not know whether \Cref{Theorem: chain continuum-connection is E_0 in the plane on locally compact sets} holds for a general planar \(G_\delta\) set. However, we note that in the proof above, we essentially used nothing but the fact that \(E_X^{CC,\varepsilon}\) is Borel for every \(\varepsilon>0\). Thus, for \(E_X^{CC}\) on a \(G_\delta\) subset \(X\) of the plane to be Borel reducible to \(E_0\), it would be enough to have \(E_X^{CC,\varepsilon}\) Borel for a sequence of positive real numbers converging to zero.

Even though the arc-connection and the chain-connection relations are not the same in general, they are closely related. In fact, there are some natural sufficient conditions on Polish spaces to ensure that \(E_X^{CC}\) and \(E_X^{A}\) are equal.

\begin{Lemma}
\label{Lemma: for a unicoherent space chain connection is arc-connection}
    Let \(X\) be a Polish space. Suppose that \(C_1 \cap C_2\) is connected for any \(C_1,C_2 \in \mathcal{C}(X)\). Then \(E_X^{CC} = E_X^A\).
\end{Lemma}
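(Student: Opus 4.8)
The plan is to adapt the strategy of \Cref{Lemma: arc-connection is the same as chain continuum-connection without triod}. Note first that the present hypothesis (which is exactly hereditary unicoherence) does \emph{not} let us invoke that lemma directly, since a hereditarily unicoherent space may contain triods — indeed any dendrite does. So the unicoherence hypothesis will have to be used in place of the atriodicity estimates. The inclusion \(E_X^A \subseteq E_X^{CC}\) is immediate: an arc joining \(x\) and \(y\) can be cut into a chain of subarcs of diameter \(<\varepsilon\) for any \(\varepsilon>0\). Hence the whole content is the reverse inclusion, for which I fix \((x,y) \in E_X^{CC}\) and aim to produce an arc from \(x\) to \(y\).

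First I would locate the right continuum to work with. Taking a single chain witnessing, say, \((x,y) \in E_X^{CC,1}\), its union is a finite union of overlapping continua and so is compact and connected; hence \(x\) and \(y\) always lie in a common continuum, i.e. \(E_X^{CC} \subseteq E_X^C\). Inside any continuum containing \(x,y\) there is, by a standard Zorn's lemma argument on nested subcontinua, a subcontinuum \(K\) irreducible between \(x\) and \(y\). The decisive use of the hypothesis is then: for every continuum \(M \subseteq X\) with \(x,y \in M\), the set \(K \cap M\) is connected (by hypothesis) and compact, contains \(x\) and \(y\), and lies in \(K\); irreducibility of \(K\) forces \(K \cap M = K\), that is \(K \subseteq M\). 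Thus \(K\) is contained in \emph{every} continuum joining \(x\) and \(y\) (and is in particular the unique such irreducible continuum).

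Now I exploit that \(K\) sits inside arbitrarily fine chains. For each \(n\), since \((x,y)\in E_X^{CC,1/n}\), there is a \(1/n\)-chain \(K_0,\dots,K_k \in \mathcal{C}_{1/n}(X)\) joining \(x\) and \(y\); its union \(L_n=\bigcup_i K_i\) is a continuum containing \(x,y\), so by the previous paragraph \(K \subseteq L_n = \bigcup_i K_i\), i.e. the chain covers \(K\). For the Peano step, fix \(\delta>0\), choose \(n\) with \(1/n<\delta\), and take such a covering chain \(K_0,\dots,K_k\). By the unicoherence hypothesis each \(K\cap K_i\) is connected, and \(\diam(K\cap K_i)\leq \diam K_i <1/n<\delta\); since \(K \subseteq \bigcup_i K_i\) we obtain \(K=\bigcup_{i=0}^k (K\cap K_i)\), a finite union of subcontinua of diameter \(<\delta\). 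As \(\delta\) was arbitrary, the standard characterization of Peano continua \cite[Theorem 8.4]{Nadler_continuum_theory} shows \(K\) is a Peano continuum, hence arc-wise connected \cite[Chapter VIII]{Nadler_continuum_theory}. An arc in \(K \subseteq X\) joining \(x\) and \(y\) then witnesses \((x,y)\in E_X^A\).

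The main obstacle, and the place where the hypothesis does the real work, is the observation that the irreducible continuum \(K\) is contained in every continuum joining \(x\) and \(y\), and therefore in every chain-union \(L_n\); this is what lets the fine chains cover \(K\), and it replaces the ``at most two components'' counting of the atriodic case by the cleaner fact that each \(K\cap K_i\) is a single small continuum. The remaining ingredients — existence of the irreducible continuum and the Peano characterization — are routine.
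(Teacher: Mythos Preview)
Your proof is correct and follows essentially the same route as the paper's: pick an irreducible continuum between \(x\) and \(y\), use the hypothesis to see it lies inside every chain-union, intersect with the links of a fine chain to get a finite cover by small subcontinua, and invoke the Peano characterization. Your justification that \(K\subseteq M\) for every continuum \(M\ni x,y\) (via \(K\cap M\) connected plus irreducibility) is in fact more explicit than the paper's, which phrases the same point as ``uniqueness of \(C\)'' and then uses it without further comment.
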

\begin{proof}
    The fact that \(E_X^{A} \subset E_X^{CC}\) is trivial. We will show the other inclusion. Let \((x,y) \in E_X^{CC}\). Then there is \(K \in \mathcal{C}(X)\) such that \(x,y \in K\). We can then find a continuum \(C \subset K\) containing \(x \text{ and }y \) that is irreducible between these points. By the assumption of the lemma, $C$ is the unique such continuum.

    We will show that \(C\) is a Peano continuum, and hence arc-wise connected. Let \(\varepsilon >0\). We can find \(K_0, \dots, K_k \in \mathcal{C}_\varepsilon(X)\) for some \(k \in \N\) such that \(x \in K_0\), \(y \in K_k\) and \(K_i \cap K_{i+1} \neq \emptyset\) for every \(0\leq i<k\). From the uniqueness of \(C\), we get \(C \subset \bigcup_{i=0}^{k}K_i\). By the assumption of the lemma, we have \(C \cap K_i \in \mathcal{C}_{\varepsilon}(X)\) for every \(0\leq i<k\). Thus \(C\) is covered by a finite number of continua of diameter smaller then \(\varepsilon\). Since \(\varepsilon\) was arbitrary, we conclude that \(C\) is a Peano continuum by a characterization of Peano continua \cite[Theorem 8.4]{Nadler_continuum_theory} and thus arc-wise connected. Thus \((x,y) \in E_X^{A}\). 
\end{proof}
\begin{Remark}
    We note that the assumptions of \Cref{Lemma: for a unicoherent space chain connection is arc-connection} are satisfied for instance for hereditarily unicoherent continua, circle-like continua or arc-like continua (see \cite[Theorem 12.1]{Nadler_continuum_theory}.
\end{Remark}

Again, for the chain-connection relation we can relate taking a product of the space to product of equivalence relations.

\begin{Proposition}
\label{Proposition: chain continuum-connection and products}
    Let \(X\) be a Polish space. Then \(E_{X^\N }^{CC}\) is Borel isomorphic to \((E_X^{CC})^\N \) and thus \(E_{X^\N }^{CC} \Bsim (E_X^{CC})^\N \).
\end{Proposition}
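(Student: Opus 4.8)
The plan is to establish the Borel isomorphism by producing a single bijection between the underlying spaces that simultaneously carries the relation $E^{CC}_{X^\N}$ to the product relation $(E^{CC}_X)^\N$, and then to observe that this map and its inverse are Borel. Following the pattern of the analogous propositions for $E^A$ and $E^C$ proven earlier in the paper, the heart of the matter is a purely set-theoretic equivalence of the two relations under the natural identification of $(X^\N)$ with a rearrangement of coordinates; the Borel-isomorphism upgrade is then routine.

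First I would fix the identification. A point of $X^\N$ is a sequence $(x_n)_{n\in\N}$ with each $x_n\in X$, so $X^\N$ is literally the set of sequences in $X$; there is nothing to rearrange, and I take the identity map as the candidate Borel isomorphism. The real content is the chain of equivalences
\begin{align*}
\bigl((x_n)_n,(y_n)_n\bigr)\in E^{CC}_{X^\N}
&\iff \forall\varepsilon>0\ \exists\text{ an }\varepsilon\text{-chain of continua in }X^\N\text{ from }(x_n)_n\text{ to }(y_n)_n\\
&\iff \forall n\in\N\ \bigl(x_n,y_n\bigr)\in E^{CC}_X.
\end{align*}
So the plan reduces to proving these two directions at the level of chains of small continua.

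For the forward direction, suppose $(x_n)_n$ and $(y_n)_n$ are chain-connected in $X^\N$. Given $\varepsilon>0$ and a fixed coordinate $m$, take a $\delta$-chain of continua $K_0,\dots,K_k$ in $X^\N$ joining the two points, with $\delta$ chosen so that the $m$-th coordinate projection shrinks diameters below $\varepsilon$; since the coordinate projection $\pi_m\colon X^\N\to X$ is continuous, each $\pi_m(K_i)$ is a continuum, the images still overlap consecutively and have diameter $<\varepsilon$, and they join $x_m$ to $y_m$. As $\varepsilon$ and $m$ were arbitrary this yields $(x_m,y_m)\in E^{CC}_X$ for every $m$. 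The subtle point here, and the step I expect to require the most care, is controlling diameters across all coordinates at once: a single metric on $X^\N$ (say $d\bigl((x_n),(y_n)\bigr)=\sum_n 2^{-n}\min\{1,d_X(x_n,y_n)\}$) has the feature that smallness of the product diameter forces smallness only of finitely many coordinate diameters, so for the reverse direction I must assemble coordinatewise chains into a single product chain with appropriate bookkeeping.

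For the reverse direction, assume $(x_n,y_n)\in E^{CC}_X$ for every $n$, and fix $\varepsilon>0$. Choose $N$ with $\sum_{n>N}2^{-n}<\varepsilon/2$, so that the tail coordinates contribute negligibly to the product metric; for each of the finitely many coordinates $n\le N$ pick a fine chain of small continua in $X$ joining $x_n$ to $y_n$, refine all of them to a common length $k$ by allowing repetitions, and form the product continua $L_i=\prod_{n\le N}K_i^{(n)}\times\prod_{n>N}\{*_n\}$ in $X^\N$, where the tail is interpolated between $x_n$ and $y_n$ in a way that keeps the product diameter below $\varepsilon$ (using that products of continua are continua and that the tail diameter is already small). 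The resulting $L_0,\dots,L_k$ form an $\varepsilon$-chain in $X^\N$ from $(x_n)_n$ to $(y_n)_n$, and letting $\varepsilon\to0$ gives chain-connectedness. Finally, the identity map is a homeomorphism, hence Borel with Borel inverse, so it witnesses that $E^{CC}_{X^\N}$ and $(E^{CC}_X)^\N$ are Borel isomorphic, and in particular $E^{CC}_{X^\N}\Bsim(E^{CC}_X)^\N$.
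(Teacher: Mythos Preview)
Your proposal is correct and follows exactly the approach the paper sketches in two sentences: projections and products of continua are again continua, and neither operation distorts diameters badly under the standard product metric. The one place your write-up needs tightening is the tail handling in the reverse direction. As written, the factor $\prod_{n>N}\{*_n\}$ is a product of singletons; if those singletons switch from $x_n$ to $y_n$ at some step, consecutive links of the chain fail to intersect in those coordinates. The clean fix is to place in each tail slot a fixed continuum $C_n$ containing both $x_n$ and $y_n$ (such $C_n$ exists since $E_X^{CC}\subset E_X^{C}$, via the union of any finite chain) and keep it constant along the whole chain; your diameter estimate $\sum_{n>N}2^{-n}<\varepsilon/2$ then goes through unchanged, and consecutive intersection in the tail is automatic. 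With that correction your argument is complete and matches the paper's intent.
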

\begin{proof}   
    This follows from the following facts. First, taking product does not increase the diameter of a set too much and projections do not increase diameter too much. Second, products and projections of continua are again continua.
\end{proof}

The upper bound for the complexity of the chain-connection equivalence relation on locally compact Polish spaces can actually be realized by a continuum.
\begin{Example}
\label{Example: X with chain continuum l infinity to omega}
    There exists a continuum \(X\) such that \(E^{CC}_X \Bsim (\R/\ell_{\infty})^{\N }\).
\end{Example}
\begin{proof}
    We construct a continuum \(X\) such that \((\R / \ell_{\infty}) \Bleq E_{X}^{CC}\). This will be enough since we will have \((\R / \ell_{\infty})^\N  \Bleq E_{X^\N }^{CC}\) by \Cref{Proposition: chain continuum-connection and products} and \(E_{X^\N }^{CC} \Bleq (\R / \ell_{\infty})^\N \) by \Cref{Theorem: chain continuum relation is smaller than l infinity to omega}. 
    
    We shall construct the space as an inverse limit. Let \(V_0 = \{0\} \) and let \(V_{n+1} = V_n \times \{0, \dots, n+1\}\) for all \(n\in \N \). Let \(X_n\) be a connected graph with vertices in \(V_n\) and edges between any two distinct points \(x,y \in V_n\) satisfying \(\max_{0\leq i\leq n}|x(i)-y(i)| \leq 1\). By a graph, we mean a metric graph where all edges are isometric to \([0,1]\).

    For \(n \in \N \), let \(f_n:X_{n+1} \to X_n\) be defined as follows. If \(n = 0\), then \(f_0\) maps everything to the unique point of \(V_0\). For \(n \geq 1\), we define \(f_n\)\ on \(V_{n+1}\)  as the natural projection onto \(V_n\). On the edges, we have two possibilities. If the projections of the endpoints are different, \(f_n\) homeomorphically maps one edge onto the other. If the end points are mapped on the same point, then we pick any edge coming from the projection and \(f_n\) maps the edge between the points onto the part \([0,1/2]\) of the picked edge as \(t \mapsto (1-|2t-1|)/2\) for \(t \in [0,1]\). Now, let \[X = \lim_{\leftarrow}(X_n,f_n).\]

    Consider the following equivalence relation \(E\) on \(\prod_{n \in \N }\{0, \dots,n\}\) \[xEy \iff \exists N \in \N  \text{ such that } \sup_{n \in \N }|x(n)-y(n)| \leq N.\]

    By {\cite[Proposition 19]{Rosendal_cofinal_families_of_Borel_equivalences}} we have \(E \Bsim \R/\ell_{\infty}\). Thus, it is enough to show \(E \Bleq E^{CC}_X\). Let \[V = \lim_{\leftarrow}(V_n,f_n|_{V_{n+1}}).\] Then there is a natural  homeomorphism \(V \approx \prod_{n \in \N }\{0, \dots,n\}\), we will show that this homeomorphism is a Borel reduction from \(E\) to \(E_X^{CC}|_{V}\). From the construction of \(X\) if for \(x,y \in \prod_{n \in \N }\{0, \dots,n\} \approx V\) we have \(\sup_{n \in \N }|x(n)-y(n)| \leq 1\), then \(x\text{ and }y\) are connected by an arc. From this we have that if \(x,y \in \prod_{n \in \N }\{0, \dots,n\} \approx V\) such that \((x,y) \in E\) then \((x,y) \in E_{X}^A\) and thus also \((x,y) \in E_{X}^{CC}\). On the other hand, if \((x,y) \not \in E\) then any continuum in \(X\) that contains the points \(x,y\) is indecomposable and of diameter at least \(1/2\). Furthermore, from the construction, any continuum of sufficiently small diameter is arc-wise connected. Thus, if the points were connected by chains of continua of small enough diameter, they would have to be connected by an arc, which is not possible.
\end{proof}

As mentioned earlier, a natural question is whether some of the results obtained can hold in higher dimensions or for more general spaces. We already mentioned that there are examples of continua in \(\R^3\) that answer the question regarding higher dimensions for the arc-connection relation. Here, we will construct a Polish space that answers this question for all of the studied connection relations at once. In fact, this construction is heavily inspired by \cite{Kunen_Starbird_arc_components_in_metric_continua}. We will also show that even for \(F_\sigma\) subsets in the plane, none of the three connection relations is necessarily analytic.

\begin{Example}
\label{Example: X such that all the relations are analytic non-Borel}
    There exists a Polish space \(X \subset \R^3\) such that \(E_X^{C} = E_X^{CC} = E_{X}^{A}\) and all of these equivalence relations are analytic non-Borel subsets of \(X\times X\).
\end{Example}
\begin{proof}
    Let \(\mathcal{C}_0\) be the Cantor middle third  set in \([0,1]\) and let \(\mathcal{C} = \mathcal{C}_0 \times \mathcal{C}_0 \times \{0\} \subset \R^3\). By the standard properties of analytic sets, see \cite[Exercise 14.3]{Kechris}, we can find  a \(G_\delta\) set \(G \subset \mathcal{C}\) such that \(\pi_{1}(G)\) is an analytic non-Borel subset of \(\mathcal{C}_0\).

    For \(x,y \in \R^3\) denote the line segment connecting \(x\) and \(y\) by  \(A_{x,y}\) and call \(x_0 = (0,0,1)\). Let \(Z = \mathcal{C}_0 \times [-1,2] \times \{0\}\) and define
    
    \[Y = Z \cup \bigcup_{c \in \mathcal{C}}A_{c,x_0} \text{ and }X = Z \cup \bigcup_{c \in G}A_{c,x_0}\ .\]
    It is clear that \(X\) is a \(G_\delta\) subset of \(Y\). For any \(x \in \mathcal{C}_0 \times \{-1\} \times \{0\} \), we have \((x,x_0) \in E_X^A\) if and only if \(\pi_1(x) \in \pi_1(G)\) by the construction. Thus, \([x_0]_{E_X^A} \cap \mathcal{C}_0 \times \{-1\} \times \{0\}\) is an analytic non-Borel set and so \(E_X^A\) is a non-Borel analytic equivalence relation.

    We have that \(E_X^A = E_X^{CC}\) by \Cref{Lemma: for a unicoherent space chain connection is arc-connection}. Since any subcontinuum of \(Y\), and thus also of \(X\), is arc-wise connected, we get \(E_X^C=E_X^A\).
\end{proof}
\begin{Example}
    There exists an \(X \subset \R^2 \) an \(F_\sigma\) set such that \(E_X^A = E_X^{CC} = E_X^C\) and all of these equivalence relations are not analytic subsets of \(X\times X\).
\end{Example}
\begin{proof}
    Let \(\mathcal{C}\) be the Cantor middle-third set in \([0,1]\). Let \(A \subset \mathcal{C}\) be a coanalytic non-Borel set so that it is not analytic. Since \(A\) is coanalytic, we can find an \(F_\sigma\) subset \(B \subset \mathcal{C} \times \mathcal{C}\) such that \[A = \{x\in \mathcal{C}: \forall y \in \mathcal{C}\  \, (x,y) \in B\}\] by \cite[Section 32.B]{Kechris}. Now, set 
    \begin{align*}
        &X_0 = \mathcal{C} \times ([-1,2] \setminus \mathcal{C}) \text{ and } X_1 = [0,1] \times \{-1\}.
    \end{align*}
    It is straightforward to check that \(X_0\) and \(X_1\) are \(F_\sigma\) subsets of the plane. Define 
    \[X = X_0 \cup X_1 \cup B.\] 
    Then \(X\) is an \(F_\sigma\) subset of the plane. Let \(x = (0,-1)\). It is easy to see that \[A=[x]_{E^A_X} \cap (\mathcal{C} \times \{2\}) .\] If \(E^A_X\) was analytic, then $[x]_{E^A_X}$, and hence $A$, would be analytic as well. Thus, \(E^A_X\) is not analytic. The fact that \(E_X^{CC} = E_X^A\) again follows by \Cref{Lemma: for a unicoherent space chain connection is arc-connection}. For \(E_X^A = E_X^C\), it suffices to note that any subcontinuum of \(X\) is arc-wise connected.
\end{proof}
The main reason why the equivalence relations are not even analytic for some planar \(F_\sigma\) sets is that to study the equivalence relations on Polish spaces, we are heavily using the set of arcs or continua in the Polish space of all compact sets (or all continua). On the other hand, it may not be possible to establish a Polish space setting on an $F_\sigma$ subset unless we change the underlying topology in which case the arcs and continua of the subset fall apart. 

Although in  higher dimensions the equivalence relations are not Borel in general, we can still bound the complexity of the connection relations for some special classes of spaces.

\begin{Corollary}
    Let \(X\) be a tree-like continuum. Then \(E_X^A \Bleq (\R/\ell_{\infty})^\N \).
\end{Corollary}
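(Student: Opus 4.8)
The plan is to deduce this corollary from the two main structural results of this section by recognizing tree-like continua as a special case in which the chain-connection relation already coincides with the arc-connection relation. The point is that a tree-like continuum is hereditarily unicoherent, which is precisely the hypothesis needed to invoke \Cref{Lemma: for a unicoherent space chain connection is arc-connection}.

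First I would verify that $X$ is hereditarily unicoherent, i.e. that $C_1 \cap C_2$ is connected for any $C_1, C_2 \in \mathcal{C}(X)$. Every tree is hereditarily unicoherent, since a tree is an acyclic graph and the intersection of any two of its subcontinua is again connected. Hereditary unicoherence is preserved under inverse limits, and a tree-like continuum is by definition homeomorphic to an inverse limit of trees; hence $X$ is hereditarily unicoherent. This is exactly the situation covered by the remark following \Cref{Lemma: for a unicoherent space chain connection is arc-connection}, so that lemma applies and gives $E_X^{CC} = E_X^A$.

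Second, since $X$ is a continuum it is compact, and in particular a locally compact Polish space, so \Cref{Theorem: chain continuum relation is smaller than l infinity to omega} applies and yields $E_X^{CC} \Bleq (\R/\ell_\infty)^\N$. Chaining the two facts together gives
\[
E_X^A = E_X^{CC} \Bleq (\R/\ell_\infty)^\N,
\]
which is the claim.

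The only step carrying genuine content is the hereditary unicoherence of tree-like continua; the remaining steps are just applications of the cited lemma and theorem. I would therefore expect the verification that inverse limits of trees are hereditarily unicoherent to be the sole (and quite mild) obstacle, and I would either cite it as a standard fact from continuum theory or, if a self-contained argument is preferred, note that a nested intersection of the unicoherence property at each finite stage of the inverse system passes to the limit.
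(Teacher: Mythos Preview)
Your proposal is correct and follows essentially the same route as the paper: use hereditary unicoherence of tree-like continua to invoke \Cref{Lemma: for a unicoherent space chain connection is arc-connection} and obtain $E_X^A = E_X^{CC}$, then apply \Cref{Theorem: chain continuum relation is smaller than l infinity to omega}. The paper simply cites hereditary unicoherence as a known property of tree-like continua rather than sketching its proof, but otherwise the arguments coincide.
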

\begin{proof}
    Since \(X\) is a tree-like continuum, it is hereditarily unicoherent. By \Cref{Lemma: for a unicoherent space chain connection is arc-connection} we have that \(E^A_X = E^{CC}_X\). So we get \(E_X^A \Bleq (\R/\ell_{\infty})^\N \) by \Cref{Theorem: chain continuum relation is smaller than l infinity to omega}.
\end{proof}

It turns out that we can show even more for the class of tree-like continua. 
\begin{Proposition}
\label{Proposition: bound for tree-like continua}
    There exists a Borel equivalence relation \(E\) such that \(E^A_X \leq_B E\) for every tree-like continuum \(X\) and \(E \leq_B E^A_T\) for some tree-like continuum \(T\).
\end{Proposition}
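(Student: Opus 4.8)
The statement asks for a single Borel equivalence relation $E$ that serves simultaneously as an upper bound for $E_X^A$ across all tree-like continua and as a lower bound realized by one specific tree-like continuum. The natural candidate for $E$ is the universal $K_\sigma$-style relation already in play, namely $(\R/\ell_\infty)^\N$, since by the preceding corollary we have $E_X^A \Bleq (\R/\ell_\infty)^\N$ for \emph{every} tree-like continuum $X$. That corollary rests on the fact that tree-like continua are hereditarily unicoherent, so that \Cref{Lemma: for a unicoherent space chain connection is arc-connection} gives $E_X^A = E_X^{CC}$, after which \Cref{Theorem: chain continuum relation is smaller than l infinity to omega} provides the bound. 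So the upper-bound half of the proposition is essentially already established, and the plan is to take $E = (\R/\ell_\infty)^\N$ (which is Borel by the results collected in the preliminaries).

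The substantive work is therefore the lower bound: I must exhibit a single tree-like continuum $T$ with $(\R/\ell_\infty)^\N \Bleq E_T^A$. The plan is to reuse, or lightly adapt, the inverse-limit construction from \Cref{Example: X with chain continuum l infinity to omega}. There the space $X = \lim_\leftarrow(X_n,f_n)$ was built from connected graphs $X_n$ and shown to satisfy $(\R/\ell_\infty)^\N \Bleq E_X^{CC}$. The key observation I would make is that if these bonding graphs $X_n$ are taken to be \emph{trees} (acyclic), then the resulting inverse limit $T$ is by definition tree-like, and moreover being an inverse limit of trees it is hereditarily unicoherent, so \Cref{Lemma: for a unicoherent space chain connection is arc-connection} yields $E_T^A = E_T^{CC}$ on it. Thus the reduction $(\R/\ell_\infty)^\N \Bleq E_T^{CC}$ obtained from the construction immediately upgrades to $(\R/\ell_\infty)^\N \Bleq E_T^A$, completing the lower bound with $E = (\R/\ell_\infty)^\N$.

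The main obstacle I anticipate is ensuring the bonding graphs can genuinely be chosen acyclic while still supporting the reduction. In \Cref{Example: X with chain continuum l infinity to omega} the graph $X_n$ connected any two vertices $x,y \in V_n$ with $\max_i|x(i)-y(i)|\le 1$, which in general produces cycles; a tree would have to drop enough edges to break every cycle while still keeping vertices at $\ell_\infty$-distance one adjacent along the subtree so that the crucial implication ``$\sup_n|x(n)-y(n)|\le 1 \Rightarrow x,y$ arc-connected'' survives. Care is needed here, because removing edges could disconnect points that ought to be $E$-related, or could change the indecomposability/diameter argument used for the ``$(x,y)\notin E \Rightarrow$ no small-diameter arc'' direction. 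I would address this by choosing, at each level, a spanning tree of the original graph $X_n$ that is compatible with the bonding maps $f_n$, and then re-verifying the two directions of the reduction: that $E$-related points remain connected by arcs through the tree structure, and that $E$-unrelated points still land in distinct arc components because any connecting continuum in $T$ must have diameter bounded below. If a spanning-tree choice turns out to interfere with the $\ell_\infty$-coding, the fallback is to redesign $V_n$ and the adjacency so that the tree structure and the $\ell_\infty$-metric on the coordinate coding are built in from the start.

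Finally, I would record the conclusion: with $E=(\R/\ell_\infty)^\N$ Borel, the upper bound $E_X^A\Bleq E$ for all tree-like $X$ follows from the preceding corollary, and the lower bound $E\Bleq E_T^A$ for the tree-like continuum $T$ follows from the adapted construction together with $E_T^A=E_T^{CC}$, which is exactly the assertion of the proposition.
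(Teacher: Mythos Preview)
Your approach attempts something substantially harder than what the proposition requires, and the obstacle you identify is real and unresolved. The paper's proof is entirely different and much shorter: it takes $T$ to be a \emph{universal} tree-like continuum (such continua exist by a result of McCord) and sets $E = E^A_T$ itself. For any tree-like $X$ there is an embedding $\varphi\colon X \to T$, and $\varphi$ is already a Borel reduction from $E^A_X$ to $E^A_T$: the forward direction is immediate, and for the backward direction one observes that an arc $\psi$ in $T$ joining $\varphi(x)$ to $\varphi(y)$ must lie in $\varphi(X)$, since otherwise $\varphi(X) \cup \psi$ would be a non-unicoherent subcontinuum of the hereditarily unicoherent continuum $T$. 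The inequality $E \leq_B E^A_T$ is then trivial, and $E = E^A_T$ is Borel by the preceding corollary.

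Your plan, by contrast, would establish $(\R/\ell_\infty)^\N \leq_B E^A_T$ for some tree-like $T$; combined with the upper bound this would pin down the exact Borel complexity of $E^A_{T'}$ for the universal tree-like continuum $T'$ as $(\R/\ell_\infty)^\N$. That is precisely the open question the paper poses immediately after this proposition, so a routine adaptation of \Cref{Example: X with chain continuum l infinity to omega} should not be expected to succeed. The obstacle you flag---that passing from the cycle-rich graphs $X_n$ to spanning trees may break either direction of the reduction---is genuine, and you give no argument that it can be overcome.
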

\begin{proof}
    By \cite{McCord_universal_P_like_compacta}, there exists a universal tree-like continuum \(T\). We will show that \(E^A_T\) is the desired equivalence relation. Let \(X\) be any tree-like continuum and \(\varphi: X \to T\) be an embedding. Clearly, \((x,y) \in E^A_X\) implies \((\varphi(x),\varphi(y)) \in E^A_T\). On the other hand, suppose that \((\varphi(x), \varphi(y)) \in E^A_T\) and let \(\psi\) be the arc connecting \(\varphi(x)\) and \(\varphi(y)\) in \(T\). If \(\psi\) is not contained in \(\varphi(X)\), then the space \(\varphi(X) \cup \psi\) is a subcontinuum of \(T\) that is not unicoherent, which is a contradiction since every tree-like continuum is hereditarily unicoherent. The second part clearly follows.
\end{proof}

\section{Questions}
In this section, we consider some open questions that arise from this investigation. Since we were able to show some of the results only for locally compact Polish spaces, a natural question is whether they hold in the greatest possible generality. In light of \Cref{Theorem: continuum-connection is Borel iff all classes are Borel}, we can formulate one for the continuum-connection relation as follows.
\begin{Question}
\label{Question: 1}
   Is $E^C_X$ Borel for any $G_\delta$ subset $X\subset \mathbb{R}^2$?
\end{Question}

If the answer to the question is positive, we can then further ask if the complexity of the continuum-connection relation is bounded for planar spaces.
\begin{Question}
\label{Question: 2}
    Is $E^C_X$ hypersmooth for any $G_\delta$ subset $X\subset \mathbb{R}^2$?
\end{Question}
Analogous questions can be asked about the chain continuum-connection relation.
\begin{Question}
\label{Question: 3}
   Is $E^{CC}_X$ Borel for any $G_\delta$ subset $X\subset \mathbb{R}^2$?
\end{Question}
Although a positive answer to this question is not enough to have any reasonable upper bound for the complexity of the chain-connection relation, we suspect that a positive answer to the question will also yield a positive answer to the next question (see the discussion after \Cref{Theorem: chain continuum-connection is E_0 in the plane on locally compact sets}).
\begin{Question}
\label{Question: 4}
    Is it true that \(E_X^{CC}\Bleq E_0\) for any $G_\delta$ subset $X\subset \mathbb{R}^2$?
\end{Question}

One can study some special classes of spaces and wonder the exact behaviour of the equivalence relations in that particular setting. Thus, the following is also a valid question.
\begin{Question}
    What is the exact complexity of \(E_T^A\), where \(T\) is a universal tree-like continuum?
\end{Question}
\begin{Remark*}
    We note that a universal tree-like continuum is not unique; however, the complexity of the arc-connection relation is essentially the same on any universal tree-like continuum.
\end{Remark*}

\bibliographystyle{abbrv}
\bibliography{bibliography}
\end{document}